\newtheorem{thm}{Theorem}[section]
\newtheorem{lem}[thm]{Lemma}
\newtheorem{prop}[thm]{Proposition}
\theoremstyle{definition}
\newtheorem{dfn}[thm]{Definition}
\newtheorem{remark}[thm]{Remark}
\theoremstyle{plain}
\numberwithin{equation}{section}
\numberwithin{equation}{section}
\newcommand{\Q}{\mathbb{Q}}
\newcommand{\Z}{\mathbb{Z}}
\newcommand{\F}{\mathbb{F}}
\newcommand{\mfa}{\mathfrak{a}}
\newcommand{\mfp}{\mathfrak{p}}
\newcommand{\mfl}{\mathfrak{l}}
\newcommand{\ch}{\mathrm{ch}}
\newcommand{\SL}{\mathrm{SL}}
\newcommand{\PSL}{\mathrm{PSL}}
\newcommand{\GL}{\mathrm{GL}}
\newcommand{\GU}{\mathrm{GU}}
\newcommand{\GO}{\mathrm{GO}}
\newcommand{\PGL}{\mathrm{PGL}}
\newcommand{\End}{\mathrm{End}}
\newcommand{\Aut}{\mathrm{Aut}}
\newcommand{\Frob}{\mathrm{Frob}}
\newcommand{\tr}{\mathrm{tr}}
\newcommand{\Spec}{\mrm{Spec}}
\newcommand{\sep}{\mrm{sep}}
\newcommand{\un}{\mrm{un}}
\newcommand{\Gal}{\mrm{Gal}}
\newcommand{\Ima}{\mrm{Im }}
\newcommand{\Sp}{\textrm{Sp}}
\newcommand{\lra}{\longrightarrow}
\newcommand{\ra}{\rightarrow}
\newcommand{\ras}{\twoheadrightarrow}
\newcommand{\mrm}[1]{\mathrm{#1}}
\def\1{1\!\!1}
\newcommand{\psmat}[4]{\bigl( \begin{smallmatrix} #1 & #2 \\ #3 & #4 \end{smallmatrix} \bigr)}
\newcommand{\csmat}[9]{\biggl( \begin{smallmatrix} #1 & #2 & #3 \\
		#4 & #5 & #6 \\ #7 & #8 & #9 \end{smallmatrix} \biggr)}
\title[On the surjectivity of Galois representations]{A class of Drinfeld $A$-modules of rank $3$ with surjective Galois representations}
\author[N. Kumar]{Narasimha Kumar}
\email{narasimha@math.iith.ac.in}
\address{
	Department of Mathematics \\
	Indian Institute of Technology Hyderabad\\
	Kandi, Sangareddy - 502284\\
	INDIA.
}
\author[D. Shit]{Dwipanjana Shit}
\email{ma22resch01001@iith.ac.in}
\address{
	Department of Mathematics \\
	Indian Institute of Technology Hyderabad\\
	Kandi, Sangareddy - 502284\\
	INDIA.
}
\keywords{Drinfeld modules, Galois representations, Irreducibility, Surjectivity, Tate Uniformization}
\subjclass[2020]{Primary 11G09, 11F80; Secondary 11C08}
\begin{document}
	\begin{abstract}
		Let $q = p^e \geq 7$ be an odd prime power, and set $A := \F_q[T]$. 
 		In this article, we construct an infinite two-parameter family of Drinfeld $A$-modules of rank $3$ such that, for every non-zero prime ideal $\mfl$ of $A$, the associated mod-$\mfl$, $\mfl$-adic, and adelic Galois representations are surjective. These results generalise the specific example, constructed only for primes $p\equiv 1\pmod{3}$, in~\cite{Che22}.
		
	\end{abstract}
	\maketitle
	\section{Introduction}
	In his seminal work~\cite{Ser72}, Serre investigated adelic images of elliptic curves over $\Q$
	without complex multiplication. He proved the following:
	\begin{thm}[\cite{Ser72}]
		If $E$ is an elliptic curve over $\mathbb{Q}$ without CM, then the associated adelic Galois representation
		$\rho_{E}: \Gal(\bar{\Q}/\Q) \ra \varprojlim_{m}\ \Aut(E[m]) \cong \GL_{2}(\widehat{\Z})$
		has open image. In particular, $[\GL_{2}(\widehat{\Z}): \Ima(\rho_{E})] < \infty.$ 
	\end{thm}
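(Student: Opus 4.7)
The plan is to follow Serre's original strategy, which reduces the adelic statement to an $\ell$-adic one, prime by prime, via the decomposition $\GL_2(\widehat{\Z}) \cong \prod_\ell \GL_2(\Z_\ell)$. Concretely, it suffices to establish three assertions: (i) for all but finitely many primes $\ell$, the mod-$\ell$ representation $\bar\rho_{E,\ell}\colon \Gal(\bar\Q/\Q)\to \GL_2(\F_\ell)$ is surjective; (ii) for every prime $\ell$, the $\ell$-adic representation $\rho_{E,\ell^\infty}\colon \Gal(\bar\Q/\Q)\to \GL_2(\Z_\ell)$ has open image in $\GL_2(\Z_\ell)$; and (iii) the images at different primes are, up to a finite-index correction, independent. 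Conclusion (i) combined with (ii) controls each factor, while (iii) packages them together to yield openness in the full product.

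For step (i) I would invoke the classification of maximal subgroups of $\GL_2(\F_\ell)$: any proper subgroup lies in a Borel, in the normalizer of a split or non-split Cartan, or in one of the exceptional subgroups whose image in $\PGL_2(\F_\ell)$ is $A_4$, $S_4$, or $A_5$. One then rules out each possibility for $\ell$ sufficiently large using the Frobenius traces $a_p = p+1-\#E(\F_p)$ at good primes $p$. The Borel case contradicts the existence of Frobenius elements with irrational eigenvalues (via the Weil bound $|a_p|\le 2\sqrt p$ and the non-CM hypothesis through an isogeny/Chebotarev argument to prevent a stable line); the normalizer-of-Cartan case is excluded by producing a Frobenius whose trace $a_p$ is non-zero modulo $\ell$ yet conjugate to its own inverse; the exceptional cases are ruled out because the image in $\PGL_2(\F_\ell)$ would have order bounded independently of $\ell$, forcing $\ell$ to be small. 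The Weil pairing pins the determinant down as the cyclotomic character, so surjectivity of $\bar\rho_{E,\ell}$ reduces to showing $\SL_2(\F_\ell)\subseteq \Ima(\bar\rho_{E,\ell})$.

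For step (ii) I would apply a lifting lemma: for $\ell\ge 5$, any closed subgroup of $\GL_2(\Z_\ell)$ whose reduction contains $\SL_2(\F_\ell)$ automatically contains $\SL_2(\Z_\ell)$. Combined with (i), this gives the full $\ell$-adic surjectivity for almost all $\ell$, and for the remaining finitely many primes the image is still open in $\GL_2(\Z_\ell)$ by a direct analysis of the filtration by congruence subgroups together with the fact that a non-CM elliptic curve has no non-trivial isogeny classes at each $\ell$ that would shrink the image indefinitely. Step (iii) is handled by a Goursat-type lemma together with the Weil-pairing constraint: any congruence condition linking the mod-$\ell_1$ and mod-$\ell_2$ images of $\Gal(\bar\Q/\Q)$ must factor through a common quotient, and since $\SL_2(\F_\ell)$ is perfect non-abelian for $\ell\ge 5$ while the only shared abelian quotients come from $\det$, the images are forced to be as large as possible jointly up to a finite kernel.

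The main obstacle, and the heart of Serre's argument, lies in step (i): proving that the mod-$\ell$ image contains $\SL_2(\F_\ell)$ for all sufficiently large $\ell$. Ruling out the normalizer of a non-split Cartan is particularly delicate because one must produce, for each large $\ell$, a prime $p$ of good reduction whose Frobenius trace satisfies a non-trivial congruence condition modulo $\ell$; this uses both the Chebotarev density theorem applied to an auxiliary extension and refined information about the isogeny graph of $E$, where the non-CM hypothesis enters decisively. The remaining pieces, while technically substantial, are more formal once this central estimate is in place.
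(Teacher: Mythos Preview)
The paper does not prove this theorem at all: it is quoted in the introduction purely as background, attributed to Serre~\cite{Ser72}, and serves only to motivate the analogous Drinfeld-module results that the paper actually establishes. There is therefore no proof in the paper against which to compare your proposal.

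That said, your outline is a fair summary of Serre's original strategy. A couple of points are worth flagging for accuracy. First, in step~(ii) for the finitely many exceptional primes, openness of the $\ell$-adic image is not really obtained by an ``analysis of the filtration by congruence subgroups together with the fact that a non-CM elliptic curve has no non-trivial isogeny classes''; rather, Serre shows that the Lie algebra of the $\ell$-adic image is all of $\mathfrak{gl}_2(\Q_\ell)$, using Faltings-free input such as the theory of Hodge--Tate decompositions (Sen) and the irreducibility coming from Shafarevich's theorem (finiteness of isogeny classes). Second, your description of how the non-split Cartan normalizer is eliminated is slightly off: Serre's argument does not proceed prime-by-prime by producing a single good Frobenius, but rather shows that if the image lay in such a normalizer for infinitely many $\ell$, the resulting quadratic characters would force $E$ to acquire complex multiplication, contradicting the hypothesis. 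These are refinements rather than fatal gaps, but as written your sketch of~(i) and~(ii) would not close on its own.
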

	Analogously, Pink and R\"utsche~\cite{PR09} studied adelic images in the context of Drinfeld $A$-modules, proving the following:
	\begin{thm}[\cite{PR09}]
		Let $\varphi$ be a Drinfeld $A$-module of rank $r$ such that $\varphi$ is generic.
		If $\End_{\bar{F}}(\varphi)=\varphi(A)$, then the image of the associated adelic Galois representation $\rho_{\varphi}(G_{F})$ is open in $\GL_{r}(\widehat{A})$. Equivalently,  $[\GL_{r}(\widehat{A}):\rho_{\varphi}(G_{F})]<\infty.$
	\end{thm}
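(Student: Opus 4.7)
My plan is to establish openness of $\rho_\varphi(G_F)$ in $\GL_r(\widehat A)$ by combining three ingredients: Pink's analogue of the Tate conjecture for Drinfeld modules, a structural theorem promoting Zariski density to $\mfl$-adic openness, and a Goursat-type argument to glue the local pictures adelically. The hypothesis $\End_{\bar F}(\varphi) = \varphi(A)$ enters in all three stages.

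First, for each non-zero prime $\mfl \subset A$, I would invoke Pink's Tate-type theorem, which identifies the commutant of $\rho_{\varphi,\mfl}(G_F)$ in $\End_{A_\mfl}(T_\mfl(\varphi))$ with $\End_{\bar F}(\varphi) \otimes_A A_\mfl$; under our assumption this is just the scalars $A_\mfl$. Combined with the genericity of $\varphi$, a structural analysis of the Zariski closure (it must be a reductive subgroup of $\GL_r$ with scalar commutant in the defining representation) forces the image of $\rho_{\varphi,\mfl}$ to be Zariski dense in $\GL_{r,A_\mfl}$. A further result of Pink, in the style of Serre's Lie-algebra analysis of $\ell$-adic representations, then promotes Zariski density of a compact subgroup of $\GL_r(A_\mfl)$ to openness, by showing its Lie algebra must fill out $\M_r(A_\mfl)$.

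Second, I would show that for all but finitely many primes $\mfl$, the mod-$\mfl$ representation $\bar\rho_{\varphi,\mfl}$ is surjective onto $\GL_r(A/\mfl)$. Using an Aschbacher-type classification of maximal subgroups of $\GL_r$ over the finite field $A/\mfl$, one rules out each class (reducible, imprimitive, field-extension, and ``exceptional''), invoking arithmetic information from Frobenius characteristic polynomials that follows from $\End_{\bar F}(\varphi) = \varphi(A)$. Once this uniform surjectivity is in hand outside a finite exceptional set $S$, a Goursat lemma --- exploiting the non-abelian simplicity of $\PSL_r(A/\mfl)$ for large $\mfl$ --- shows that the image of $\rho_\varphi$ surjects onto $\prod_{\mfl \notin S} \GL_r(A/\mfl)$; combined with the $\mfl$-adic openness at each $\mfl \in S$ from the first step, this yields openness of $\rho_\varphi(G_F)$ in $\GL_r(\widehat A)$.

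The principal obstacle I anticipate is the mod-$\mfl$ surjectivity step: the Aschbacher framework is conceptually clean, but excluding each class of maximal subgroups requires delicate case-by-case arithmetic and carries most of the technical weight. The other two stages are, morally, black-box applications of Pink's structural theorems, whereas the uniform mod-$\mfl$ surjectivity combines several independent inputs (group theory of $\GL_r$ over finite fields, together with control of Frobenius conjugacy classes) and is where the bulk of genuinely new argumentation resides.
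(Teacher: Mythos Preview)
The paper does not prove this theorem. It is stated in the introduction as a background result due to Pink and R\"utsche~\cite{PR09} and is used only as context and motivation; no proof or proof sketch appears anywhere in the paper. Consequently there is no ``paper's own proof'' to compare your proposal against.

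For what it is worth, your outline is in the right spirit for the actual argument in~\cite{PR09}: one does use Pink's analogue of the Tate conjecture to control commutants, and then promotes this to openness via a Lie-theoretic/strong-approximation analysis. However, your second step is not how~\cite{PR09} proceeds: they do \emph{not} prove uniform mod-$\mfl$ surjectivity via an Aschbacher-type classification of maximal subgroups. That technique is specific to the explicit families treated in the present paper (and in~\cite{Zyw11},~\cite{Che22}), where one has concrete Frobenius data to rule out each Aschbacher class. The general result of~\cite{PR09} instead relies on more abstract machinery (strong approximation for semisimple groups over function fields, combined with Pink's structural results on the Zariski closure of the image), which gives openness directly without ever establishing mod-$\mfl$ surjectivity for cofinitely many $\mfl$.
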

	
	A natural problem in both settings is determining when the index of the image is equal to one; that is, when the adelic Galois representation is surjective. For elliptic curves over $\Q$, in~\cite{Ser72}, Serre showed that the answer is negative, i.e., the image has index at least $2$ in $\GL_{2}(\widehat{\Z})$. However, in~\cite{Gre10}, Greicius gave an example of a number field $K$ and an elliptic curve $E/K$ that admits a surjective adelic Galois representation.

	In contrast, for Drinfeld $A$-modules, there are known examples with surjective adelic representations. In~\cite{Hay74}, Hayes showed that the adelic Galois representation of the Carlitz $A$-module is surjective. 
	In~\cite{Zyw11}, Zywina constructed Drinfeld $A$-modules of rank $2$ defined by $\varphi_{T}=T+\tau-T^{q-1}\tau^2$ having surjective adelic Galois representations 
	if $q \geq 5$ is an odd prime power. Inspired by~\cite{Zyw11}, Chen \cite{Che22} constructed a Drinfeld $A$-module of rank $3$ with surjective adelic Galois representation. More precisely, he showed that
	
	\begin{thm}[\cite{Che22}, Theorem 1]\label{chen_rank_3_case}
		Let $q=p^e$ be a prime power with $p\geq 5$ and $p\equiv 1\pmod{3}$. Let $\varphi$ be the Drinfeld $A$-module of rank $3$ defined by $\varphi_{T}=T+\tau^2+T^{q-1}\tau^3$. The adelic Galois representation
		$$\rho_{\varphi}:G_{F} \lra \varprojlim_{\mfa}\ \Aut(\varphi[\mfa])\cong\GL_{3}(\widehat{A})$$
		is surjective.
	\end{thm}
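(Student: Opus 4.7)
My plan is to follow the classical three-step reduction used by Zywina~\cite{Zyw11} in the rank-$2$ case: (i) establish mod-$\mfl$ surjectivity of the residual representation $\bar\rho_{\varphi,\mfl}:G_F\to\GL_3(A/\mfl)$ for every nonzero prime $\mfl\subset A$, (ii) lift each to $\mfl$-adic surjectivity by a pro-$\mfl$ Frattini-type argument, and (iii) glue across all $\mfl$ via a Goursat-type independence argument, yielding surjectivity onto $\GL_3(\widehat A)\cong\prod_{\mfl}\GL_3(A_\mfl)$.

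The geometric engine of the whole argument will be the reduction behaviour of $\varphi_T=T+\tau^2+T^{q-1}\tau^3$. Outside $\{(T),\infty\}$ the leading coefficient $g_3=T^{q-1}$ is a unit, so $\varphi$ has good reduction there. At $\mfp=(T)$, $g_3$ vanishes but $g_2=1$ remains a unit, so $\varphi$ has stable reduction of rank $2$. I would then apply Tate uniformization at $(T)$ to produce, for every $\mfl\neq(T)$, a short exact sequence
\[
0\to\Lambda/\mfl\Lambda\to\varphi[\mfl]\to\psi[\mfl]\to 0
\]
of $G_{F_{(T)}}$-modules, where $\psi$ denotes the rank-$2$ reduction; the inertia at $(T)$ fixes both the sub and the quotient but acts nontrivially on the extension, producing a unipotent (transvection-type) element in $\mrm{Im}(\bar\rho_{\varphi,\mfl})$. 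In parallel, at $\infty$ I would analyse the inertia action through the Newton polygon of $\varphi_T$ to extract a semisimple tame element. The hypothesis $p\equiv 1\pmod 3$ enters here to guarantee that this tame cocharacter is \emph{regular} semisimple on $\varphi[\mfl]$, i.e.\ has three pairwise distinct eigenvalues.

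With these two local ingredients in hand, I would prove mod-$\mfl$ surjectivity by ruling out each proper maximal subgroup of $\GL_3(A/\mfl)$. Reducible and imprimitive subgroups should be excluded by combining the transvection at $(T)$ with the regular semisimple element at $\infty$. Cartan-normaliser and exceptional cases should be killed using the determinant character, which I would identify (up to twist) with the Carlitz character, hence surjective onto $(A/\mfl)^\times$ by Hayes~\cite{Hay74}; the remaining primitive almost-simple cases should be eliminated by order estimates together with Frobenius trace computations at auxiliary primes $\mfq=(T-a)$ for well-chosen $a\in\F_q$. Once mod-$\mfl$ surjectivity is known, the passage to $\mfl$-adic is a formal pro-$\mfl$ argument provided $|A/\mfl|$ is not too small, and independence across $\mfl$ follows from the simplicity of $\PSL_3(A/\mfl)$ combined with a determinant compatibility check.

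The hardest step I expect is the uniform subgroup analysis at the primes $\mfl$ of smallest norm, where order-based arguments are weakest and the imprimitive (monomial) subgroups are most stubborn. The condition $p\equiv 1\pmod 3$ appears indispensable precisely at that point: without a primitive cube root of unity in $\F_q$ the $\infty$-inertia cocharacter acquires repeated eigenvalues, and one would need an entirely different mechanism for producing a regular semisimple element to break the monomial symmetry.
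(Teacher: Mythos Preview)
Your three-step architecture (mod-$\mfl$ surjectivity, lift to $\mfl$-adic, Goursat gluing) matches the paper, and Tate uniformization at $(T)$ is indeed the central local tool. But the way the paper (following Chen) extracts information from it is quite different from what you sketch, and your diagnosis of the role of $p\equiv 1\pmod 3$ is off.

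First, your Tate sequence is reversed: the correct $A[G_{F_{(T)}}]$-exact sequence is $0\to\psi[\mfl]\to\varphi[\mfl]\to\Lambda/\mfl\Lambda\to 0$, with the rank-$2$ good-reduction piece $\psi[\mfl]$ as the \emph{sub}, not the quotient. More importantly, inertia at $(T)$ is not just used to produce ``a transvection''; via a careful valuation analysis of $e_\Lambda$ the paper shows that $\bar\rho_{\varphi,\mfl}(I_T)$ is the \emph{entire} group $\bigl\{\psmat{I_2}{*}{0}{1}\bigr\}$ of order $|A/\mfl|^2$. This divisibility $|A/\mfl|^2\,\big|\,|\bar\rho_{\varphi,\mfl}(G_F)|$ is the decisive quantitative input: combined with irreducibility and the Carlitz determinant, it kills every Aschbacher class $\mathcal C_1,\dots,\mathcal C_8,\mathcal S$ by straight order counting. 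No semisimple element, and no analysis at $\infty$, is used anywhere.

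Second, irreducibility of $\varphi[\mfl]$ is not proved by combining unipotent and semisimple elements. It is proved by explicitly computing the characteristic polynomials of $\Frob_{(T-c)}$ for $c\in\F_q^\times$ (here they are $x^3+x^2-\bar\mfp$) and showing directly that they cannot all factor compatibly with a putative $1$- or $2$-dimensional subrepresentation.

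Finally, $p\equiv 1\pmod 3$ does \emph{not} enter as a regular-semisimplicity condition on tame inertia at $\infty$. In Chen's argument it is used at three purely algebraic points: the irreducibility step at $\mfl=(T)$ (simpler characteristic polynomials), a lemma that normal solvable subgroups of $\GL_3(\F_q)$ are central, and a trace comparison in the Goursat step. Indeed the present paper removes the congruence condition entirely for odd $q\geq 7$, so any argument that makes $p\equiv 1\pmod 3$ look essential to producing a regular semisimple element is pointing in the wrong direction.
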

	In this work, we extend Chen’s result, i.e., Theorem~\ref{chen_rank_3_case}, by constructing an infinite class of rank $3$ Drinfeld $A$-modules with surjective adelic representations. The main ideas of this article are inspired by~\cite{Zyw11} and~\cite {Che22}. Let $\mathcal{G}\subseteq A^2$ be an infinite family of pairs $(g_{1},g_{2})$ such that the following holds:
	\begin{itemize}
		\item $(g_1, T^{q}-T)=T$ (Type $1$) or $T^{q}-T$ (Type $2$).
		\item $(g_2, T^{q}-T)=1$.
	\end{itemize}
	Note $(0,1) \in \mathcal{G}$. Our main theorem can now be stated as follows.
	\begin{thm}[= Theorem~\ref{Duplicate_l_adic_surjectivity}]
		\label{l_adic_surjectivity}
		Let $q\geq 7$ be an odd prime power. Let $\varphi$ be the Drinfeld $A$-module  of rank $3$ defined by $\varphi_{T}=T+g_{1}^{q-1}\tau+g_{2}^{q-1}\tau^2+T^{q-1}\tau^3$ with $(g_{1},g_{2})\in \mathcal{G}$. 
		The $\mfl$-adic Galois representation
		$$\rho_{\varphi, \mfl}:G_{F} \longrightarrow \varprojlim_{i}\ \Aut(\varphi[\mfl^i]) \cong \GL_{3}(A_{\mfl})$$
		is surjective, for all $\mfl \in \Omega_{A}$.
	\end{thm}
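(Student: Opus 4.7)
My approach follows the two-stage strategy pioneered by Zywina~\cite{Zyw11} and extended to rank $3$ by Chen~\cite{Che22}. In the first stage, I reduce $\mfl$-adic surjectivity to mod-$\mfl$ surjectivity via a standard lifting argument for closed subgroups of $\GL_3(A_\mfl)$: provided $|A/\mfl|$ is large enough, $\SL_3(A/\mfl)$ is perfect and acts irreducibly on the successive quotients of the Lie filtration of the first congruence subgroup, so any closed subgroup of $\GL_3(A_\mfl)$ that surjects onto $\GL_3(A/\mfl)$ must already equal $\GL_3(A_\mfl)$. The bulk of the proof then becomes showing that the mod-$\mfl$ representation $\bar\rho_{\varphi,\mfl}:G_F \to \GL_3(A/\mfl)$ is surjective for every non-zero prime $\mfl$ of $A$.

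For mod-$\mfl$ surjectivity I would proceed in three substeps. First, it is a standard fact that $\det \bar\rho_{\varphi,\mfl}$ factors through a Carlitz-type character which is surjective onto $(A/\mfl)^\times$ by Hayes~\cite{Hay74}, so it suffices to show $\SL_3(A/\mfl) \subseteq \Ima(\bar\rho_{\varphi,\mfl})$. Second, I exploit the special form of the leading coefficient $T^{q-1}$: reducing $\varphi_T$ modulo the place $v=(T)$ kills the $\tau^3$ term and yields $\bar\varphi_T = g_1(0)^{q-1}\tau + g_2(0)^{q-1}\tau^2$, which is a rank-$2$ Drinfeld module because $(g_2,T^q-T)=1$ forces $g_2(0)\neq 0$. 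Therefore $\varphi$ admits Tate uniformization at $v$ with a rank-$2$ sub-Tate-module, and the associated tame inertia element produces a non-trivial transvection in $\GL_3(A/\mfl)$. Third, I use the conditions defining $\mathcal{G}$ to single out an auxiliary prime of $A$ whose Frobenius acts on $\varphi[\mfl]$ with irreducible mod-$\mfl$ characteristic polynomial, which forces $\bar\rho_{\varphi,\mfl}$ to be irreducible. Combining this with the previous step, the classical theorem that an irreducible subgroup of $\GL_n(k)$ (for $n\geq 3$) containing a transvection must contain $\SL_n(k)$ yields the desired surjectivity.

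The principal obstacle will be the third substep: constructing, uniformly in $\mfl$ and in the choice of $(g_1,g_2) \in \mathcal{G}$, a Frobenius element whose characteristic polynomial is irreducible modulo $\mfl$. The two constraints defining $\mathcal{G}$ are precisely what enable such a uniform construction, since they govern the reduction of $\varphi$ at all primes of $A$ of degree $1$, namely those dividing $T^q-T$, and thus allow one to read off the Frobenius characteristic polynomial at a convenient test place. A secondary but unavoidable difficulty is a small exceptional set of primes $\mfl$ (notably when $|A/\mfl|$ is very small) where either the lifting lemma or the transvection-plus-irreducibility dichotomy degenerates; these cases should be treatable ad hoc under the standing hypothesis $q\geq 7$, which is chosen precisely to keep this exceptional set minimal, but each such $\mfl$ must still be verified individually.
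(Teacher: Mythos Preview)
Your first-stage lifting argument has a genuine gap. Over $A_\mfl \cong \F_q[[t]]$ it is \emph{not} true that a closed subgroup $H \leq \GL_3(A_\mfl)$ with $\det(H)=A_\mfl^\times$ and $H \bmod \mfl = \GL_3(\F_\mfl)$ must equal $\GL_3(A_\mfl)$. A counterexample (when $p\neq 3$, say) is $H = \langle \GL_3(\F_\mfl),\, A_\mfl^\times\cdot I_3\rangle$: it is closed, surjects modulo $\mfl$, has full determinant since $(1+\mfl A_\mfl)^3 = 1+\mfl A_\mfl$, yet every element is a scalar times a constant matrix, so $H\cap K_1$ consists only of scalars. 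The Lie-filtration heuristic you invoke fails precisely because $M_3(\F_\mfl)$ is not irreducible under $\GL_3(\F_\mfl)$ --- it splits off a scalar summand --- so nothing forces $H\cap K_1$ to hit $\mathfrak{sl}_3$. The paper addresses this via Pink--R\"utsche \cite[Proposition~4.1]{PR09}, which adds the hypothesis that $H \bmod \mfl^2$ contain a \emph{non-scalar} element congruent to $I_3$ modulo $\mfl$, and then verifies this extra condition by hand: for $\mfl\neq(T)$ the inertia group $I_T$ already supplies such elements (Remark~\ref{mfa_prime_to_T_image_of_inertia}), while for $\mfl=(T)$ a Newton-polygon analysis of $\varphi_{T^2}(x)/x$ together with a valuation argument is needed. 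Your outline omits this step entirely.

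Your mod-$\mfl$ strategy also diverges from the paper's, and the divergence is not obviously harmless. You propose to find, for each $\mfl$, a single Frobenius at a degree-one prime with irreducible characteristic polynomial over $\F_\mfl$; but there are only $q-1$ such primes, and for $\mfl$ of large degree there is no a priori reason one of the resulting $q-1$ cubics over $\F_\mfl$ must be irreducible. The paper instead proves irreducibility of $\varphi[\mfl]$ by contradiction (Theorem~\ref{varphi_mfl_irreducible_F_mfl_G_F_module}), analyzing the unramified character that would appear in a putative composition series, and then --- rather than invoking a transvection criterion --- shows via Tate uniformization that $|A/\mfl|^2$ divides $|\bar\rho_{\varphi,\mfl}(G_F)|$ (Proposition~\ref{cardinality_of_A_mod_mfl_divide_im_of_mod_mfl_reduction}) and runs through Aschbacher's classes $\mathcal{C}_1,\dots,\mathcal{C}_8,\mathcal{S}$ to exclude every maximal subgroup of $\GL_3(\F_\mfl)$ not containing $\SL_3$. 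Your ``irreducible $+$ transvection $\Rightarrow \SL_n$'' route may be salvageable, but as written it is not a proof.
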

	As a consequence of Theorem~\ref{l_adic_surjectivity} with some suitable modifications of arguments in~\cite[\S8]{Che22}, we get the surjectivity of the adelic representation (cf. Theorem~\ref{adelic_surjectivity}). 
	
	\subsection{Organization}
	Following the introductory section, §2 discusses Drinfeld modules, their associated Galois representations, and Tate uniformization.
	In $\S3$, for any non-zero prime ideal $\mfl$ of $A$, we establish the irreducibility of the mod-$\mfl$ representations. In $\S 4$, we provide estimates for the size of the image of the mod-$\mfl$ representation. In §5, we apply Aschbacher’s theorems [BHR13, Theorems 2.2.19 and 4.10.2] to prove the surjectivity of the mod-$\mfl$ representation. The main result of the article is proven in §6, where we demonstrate the surjectivity of the $\mfl$-adic representation, using [PR09, Proposition 4.1]. Consequently, we establish the surjectivity of the adelic representation in $\S7$. Finally, in §8, we compare our results with those of~\cite{Che22}.

	\section{Preliminaries}
	Throughout this article, we stick to the following notations. Let $q\geq 7$ be an odd prime power, unless explicitly stated otherwise. Let $A:=\F_{q}[T]$ with field of fraction $F:=\F_{q}(T)$, $G_{F}:=\Gal(F^{\sep}/F)$, where $F^{\sep}$ is the separable closure of $F$ in $\bar{F}$.  Let $\widehat{A} :=\varprojlim_{\mfa \lhd A}A/\mfa$, where $\mfa$ runs over all non-zero ideals of $A$, denote the profinite completion of $A$. For a commutative ring $R$ with unity, $R^{\times}$ denotes the set of all units in $R$.
	Let $\Spec(A)$ be the set of all prime ideals in $A$ and $\Omega_{A}:=\Spec(A)\setminus \{(0)\}$. To avoid notational complexity, we shall use $\mfa\subseteq A$ to denote both the generator and the ideal it generates. Let $\mfp\in \Omega_{A}$, $A_{\mfp}$ denote the completion of $A$ with respect to $\mfp$ with  field of fraction $F_{\mfp}$, which is complete with respect to the normalized discrete valuation $\nu_p$, with the residue field $\F_{\mfp} := A_\mfp/\mfp A_\mfp$.

	
	

	\subsection{Tate uniformization}
	\begin{dfn}
		Let $\mfp\in \Omega_{A}$ and $\psi:A\ra A_{\mfp}\{\tau\}$ be a Drinfeld $A_{\mfp}$-module. A $\psi$-lattice of rank $d$ is a free $A$-submodule $\Lambda\subseteq {}^{\psi}F_{\mfp}^{\sep}$ of rank $d$, which is invariant under the action of $G_{F_{\mfp}}$ and discrete with respect to the topology of the local field  $F_{\mfp}^{\sep}$. Here, ${}^{\psi}F_{\mfp}^{\sep}$ denotes the $A$-action on $F_{\mfp}^{\sep}$ induced via $\psi$.
	\end{dfn}
	
	Let $r$ and $d$ be two positive integers. A Tate datum over $A_{\mfp}$ is a pair $(\psi,\Lambda)$, where $\psi$ is a Drinfeld $A_{\mfp}$-module of rank $r$  and $\Lambda$ is a $\psi$-lattice of rank $d$. Two such Tate datum $(\psi,\Lambda)$ and $(\psi^\prime,\Lambda^\prime)$ are isomorphic if there is an isomorphism from $\psi$ to $\psi^\prime$ such that the induced $A$-module homomorphism ${}^{\psi}F_{\mfp}^{\sep}\ra {}^{\psi^\prime}F_{\mfp}^{\sep}$ gives an $A$-module isomorphism $\Lambda \ra \Lambda^\prime$.  The correspondence below is well-known, and we refer to it as the Drinfeld-Tate uniformization (cf.~\cite[Theorem 6.2.11]{Pap23}).
	
	\begin{thm}[\cite{Dri74}, \S 7]\label{Drinfeld}
		Let $r,d$ be two positive integers. There is a one-to-one correspondence between the following two sets:
		\begin{enumerate}
			\item The set of $F_{\mfp}$-isomorphism classes of Tate datum $(\psi,\Lambda)$ where $\psi$ is a Drinfeld $A_{\mfp}$-module of rank $r$ with good reduction, $\Lambda$ is a $\psi$-lattice of rank $d$.
			\item The set of $F_{\mfp}$-isomorphism classes of Drinfeld modules $\varphi:A\ra A_{\mfp}\{\tau\}$ of rank $r+d$ with stable reduction of rank $r$.
		\end{enumerate}
	\end{thm}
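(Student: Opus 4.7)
The plan is to prove both directions of the correspondence by constructing explicit mutually inverse maps between the two sets of isomorphism classes, following Drinfeld's original approach as presented in~\cite[\S 6.2]{Pap23}.

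For the direction $(1) \Rightarrow (2)$, given a Tate datum $(\psi, \Lambda)$ I would first define the lattice exponential
$$e_{\Lambda}(z) := z \prod_{0 \neq \lambda \in \Lambda} \Bigl(1 - \frac{z}{\lambda}\Bigr).$$
Discreteness of $\Lambda$ in $F_\mfp^{\sep}$ ensures convergence on every bounded disc of the completion $\mathbb{C}_\mfp$ of an algebraic closure of $F_\mfp$, and $e_\Lambda$ becomes an entire, $\F_q$-linear, surjective map with kernel exactly $\Lambda$. Because $\psi_T(\Lambda)\subseteq\Lambda$, the set $\psi_T^{-1}(\Lambda)$ is a free $A$-module containing $\Lambda$ with finite index. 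A Weierstrass-type factorization then produces a unique $\F_q$-linear polynomial $\varphi_T$ satisfying $\varphi_T\circ e_\Lambda = e_\Lambda\circ \psi_T$. Galois-stability of $\Lambda$ and $\psi$ under $G_{F_\mfp}$ forces the coefficients of $\varphi_T$ to lie in $F_\mfp$; a Newton-polygon estimate combined with the good reduction of $\psi$ then upgrades this to integrality over $A_\mfp$, and a leading-coefficient computation yields that $\varphi$ has rank $r+d$ with stable reduction of rank $r$ (since reduction mod $\mfp$ recovers $\psi$).

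For the converse $(2) \Rightarrow (1)$, given $\varphi$ of rank $r+d$ with stable reduction of rank $r$, I would construct the formal logarithm $\log_\varphi(X) \in F_\mfp[[X]]$ as the unique $\F_q$-linear power series with linear term $X$ intertwining the $A$-action through $\varphi$, and then take its formal inverse $e_\varphi$. The stable-reduction hypothesis, translated into the slopes of the Newton polygon of $\varphi_T$, is precisely what forces $e_\varphi$ to converge on all of $F_\mfp^{\sep}$. Setting $\Lambda := \ker(e_\varphi)$, I would verify $G_{F_\mfp}$-stability and discreteness (the latter since $e_\varphi$ is an isometry near zero), and count its $A$-rank by comparing the torsion-point totals $\#\varphi[\mfq^n]$ with the contributions from the reduced rank-$r$ Drinfeld module, forcing $\Lambda$ to have rank exactly $d$. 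Finally, $\psi_T := e_\varphi^{-1} \circ \varphi_T \circ e_\varphi$, a priori a formal power series, turns out to be polynomial and defines a Drinfeld module $\psi$ of rank $r$ with good reduction.

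That the two constructions are mutually inverse on $F_\mfp$-isomorphism classes is essentially tautological from the defining commutation relation, once one knows the exponential attached to a given Drinfeld module is unique. The main obstacle lies in the analytic work underpinning the Newton-polygon analysis: one must show that ``stable reduction of rank $r$'' is the precise condition under which the slopes of $\log_\varphi$ align so that $e_\varphi$ converges on all of $F_\mfp^{\sep}$ and the resulting kernel is a rank-$d$ lattice. This non-archimedean analysis, paralleling Tate's uniformization of elliptic curves with multiplicative reduction, is the technical heart of the theorem.
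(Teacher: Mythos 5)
The paper does not prove Theorem~\ref{Drinfeld} at all: it is stated as a black box, credited to Drinfeld~\cite{Dri74} and to Papikian~\cite[Theorem~6.2.11]{Pap23}, and the surrounding text only \emph{records} consequences of it, namely the properties of $e_{\Lambda}$ in \eqref{e_Lambda_product_expression}--\eqref{e_Lambda_for_rank_1_lattice} and the exact sequence \eqref{s_e_s_drinfeld_datum}, without derivation. So there is no proof in the paper to align with; your sketch stands on its own.

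On the mathematics of the sketch: the direction $(1)\Rightarrow(2)$ is the standard construction and is fine in outline. The direction $(2)\Rightarrow(1)$, however, contains a genuine error. You propose to take the formal logarithm $\log_\varphi$, invert it to get the formal exponential $e_\varphi$, and then claim that the stable-reduction hypothesis ``forces $e_\varphi$ to converge on all of $F_{\mfp}^{\sep}$,'' so that $\Lambda := \ker(e_\varphi)$ is the desired lattice. This cannot work: in the $\mfp$-adic (local) setting the formal exponential of a Drinfeld module over $A_{\mfp}$ never converges on all of $\C_\mfp$, not even when $\varphi$ has good reduction. One can check this already for the Carlitz module: the coefficients $1/D_i$ of $e_C$ have $\nu_\mfp$ tending to $-\infty$ roughly linearly in $i$, which restricts convergence to a bounded disc. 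More conceptually, $e_\varphi$ is a map ``from $\mathbb{G}_a$ to $\varphi$,'' whereas the object the theorem requires is a uniformization map ``from $\psi$ to $\varphi$'': what one must produce is the twisted power series $u = 1 + u_1\tau + u_2\tau^2 + \cdots$ (the $e_\Lambda$ of \eqref{e_Lambda_power_series_expression}) with $u_n \in \mfp A_\mfp$, $\nu_\mfp(u_n)\to\infty$, satisfying the intertwining relation $u\,\psi_a = \varphi_a\,u$. This $u$ is found by a successive-approximation (Hensel-type) argument that simultaneously produces the rank-$r$ good-reduction module $\psi$; the lattice is then $\Lambda = \ker(u)$, and discreteness and the rank count come from the valuations $\nu_\mfp(u_n)$, not from any global convergence of $e_\varphi$. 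As written, your converse direction identifies the wrong analytic object and asserts a convergence statement that is false, so the argument does not go through; the rest of the sketch (Galois equivariance of $\Lambda$, rank bookkeeping, inverse correspondence) would need to be reattached to the correct $u$.
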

	In the proof of the above correspondence, one uses an entire $\F_{q}$-linear function denoted by $e_{\Lambda}(x)$, and defined by
	\begin{align}\label{e_Lambda_product_expression}
		e_{\Lambda}(x) := x \prod_{\substack{\lambda \in \Lambda \\ \lambda \ne 0}} \left(1 - \frac{x}{\lambda}\right) .
	\end{align}
	Here, we recall some important properties of $e_{\Lambda}(x)$:
	\begin{itemize}
		\item  The function $e_{\Lambda}(x)$ satisfies the relation $e_{\Lambda}    (\psi_{T}(x))=\varphi_{T}(e_{\Lambda}(x))$. 
		\item  The function $e_{\Lambda}(x)$ has a power series expansion as
		\begin{align}\label{e_Lambda_power_series_expression}
			e_{\Lambda}(x)=u_{0}x+u_{1}x^{q}+u_{2}x^{q^2}+\cdots+u_{n}x^{q^n}+\cdots
		\end{align}
		where $u_{0}=1$, and $u_{n}\in \mfp A_{\mfp}$ for $n\geq 1$ with $\nu_{\mfp}(u_n)\to \infty$ as $n\to \infty$.
		\item  If $\Lambda$ is a $\psi$-lattice of rank $1$, then by~\eqref{e_Lambda_product_expression}, we have
		\begin{align}\label{e_Lambda_for_rank_1_lattice}
			e_{\Lambda}(x)=x\cdot\prod_{\substack{0\neq a \in A}} \left(1 - \frac{x}{\psi_{a}(\lambda)} \right),
		\end{align}
		where $\lambda$ is a generator of $\Lambda.$
	\end{itemize}
	Let $\varphi$ be a generic Drinfeld $A$-module. For any $a\in A\setminus \{0\}$, the $a$-torsion of $\varphi$, denoted by $\varphi[a]$, is the set of all roots of the $\F_q$-linear separable polynomial $\varphi_a(x)$ in $F^{\sep}$. For any non-zero ideal $\mfa\subseteq A$, $\varphi[\mfa]:=\varphi[a]$ where $a$ is a generator of $\mfa$. 
	Continuing the discussion above, for any $a \in A \setminus \F_q$, we have the following: 
	\begin{enumerate}
		\item There exists a short exact sequence of $A[G_{F_{\mfp}}]$-modules
		\begin{equation}
			\label{s_e_s_drinfeld_datum}
			0 \longrightarrow \psi[a] = \psi_a^{-1}(0) \longrightarrow \varphi[a] \xrightarrow{\psi_a} \Lambda/a\Lambda \longrightarrow 0.
		\end{equation}
		\item There exists an $A[G_{F_{\mfp}}]$-module isomorphism
		\begin{align}
			\label{iso_drinfeld_datum}
			e_{\Lambda}: \psi_{a}^{-1}(\Lambda)/\Lambda  &\xrightarrow{\sim} \varphi[a]\\
			z+\Lambda &\mapsto e_{\Lambda}(z).\notag
		\end{align}
		\item We have
		\begin{align}\label{expression_of_varphi_mfl_in_terms_of_e_Lambda}
			\varphi_{a}(x)=ax\cdot\prod_{\substack{0\neq \gamma^\prime \in \psi_{a}^{-1}(\Lambda)/\Lambda}}\left(1 - \frac{x}{e_{\Lambda}(\gamma^\prime)} \right).
		\end{align}
	\end{enumerate}
	We now introduce various Galois representations associated to Drinfeld modules. 
	\subsection{Galois representations}
	Let $\varphi$ be a generic Drinfeld $A$-module of rank $r$. By~\cite[Corollary 3.5.3]{Pap23}, 
	for any non-zero ideal $\mfa\subseteq A$, $\varphi[\mfa]$ is isomorphic to $(A/\mfa)^r$ as an $A/\mfa$-module, hence $\Aut_{A}(\varphi[\mfa])\cong \GL_{r}(A/\mfa)$. 
	
	\begin{itemize}
		\item The mod-$\mfa$  Galois representation of $\varphi$ is defined by
		$$\bar{\rho}_{\varphi,\mfa}:G_{F}\longrightarrow \Aut_{A}(\varphi[\mfa]))\cong \GL_{r}(A/\mfa).$$
		
		\item For any $\mfp \in \Omega_A$, the $\mfp$-adic Galois representation of $\varphi$ is defined by
		$${\rho}_{\varphi,\mfp}:G_{F}\longrightarrow \varprojlim_{i}\ \Aut_A(\varphi[\mfp^i])\cong\Aut_{A_{\mfp}}(T_{\mfp}(\varphi))\cong \GL_{r}(A_{\mfp}),$$  where $T_{\mfp}(\varphi)$ denotes the $\mfp$-adic Tate module of $\varphi$. Consequently, we have the adelic Galois representation $\rho_{\varphi}:G_{F}\rightarrow \GL_{r}(\widehat{A})$ associated to $\varphi$.
		
	\end{itemize}
	\subsection{On surjectivity of the Carlitz module}
	The Carlitz module is an example of a Drinfeld $A$-module of rank $1$, which is denoted by $C$, is defined by $C_{T}=T+\tau$. Hayes proved that the adelic Galois representation of the Carlitz module is surjective. More precisely:
	\begin{prop}[\cite{Hay74}]\label{Hayes}
		For every non-zero ideal $\mathfrak{a}$ of $A$, the representation
		$$\bar{\rho}_{C,\mathfrak{a}}:G_{F}\longrightarrow \Aut(C[\mathfrak{a}])\cong (A/\mathfrak{a})^{\times}$$
		is surjective. The representation $\bar{\rho}_{C,\mathfrak{a}}$ is unramified at all finite places of $F$ not dividing $\mathfrak{a}$, and for each monic irreducible polynomial $\mfp$ of $A$ not dividing $\mathfrak{a}$, we have $\bar{\rho}_{C,\mathfrak{a}}(\Frob_{\mfp})\equiv \mfp \pmod{\mathfrak{a}}$. In particular, the adelic representation $\rho_C:G_{F}\ra \GL_{1}(\widehat{A})={\widehat{A}}^{\times}$ is surjective.
	\end{prop}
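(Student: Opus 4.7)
The plan is to follow Hayes' original approach, which is the function-field analog of the classical computation $\Gal(\Q(\mu_n)/\Q) \cong (\Z/n)^\times$: analyze the explicit polynomial $C_\mfa(x)$ at each prime power and then patch the pieces together via the Chinese Remainder Theorem.

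By CRT, if $\mfa = \prod_i \mfp_i^{n_i}$ with distinct primes, then $C[\mfa] \cong \bigoplus_i C[\mfp_i^{n_i}]$ as $A[G_F]$-modules, and $(A/\mfa)^\times \cong \prod_i (A/\mfp_i^{n_i})^\times$. It therefore suffices to prove that $\bar{\rho}_{C,\mfp^n}$ is surjective for every prime power $\mfp^n$, and that the fields $F(C[\mfp_i^{n_i}])$ are pairwise linearly disjoint over $F$.

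For the prime-power case, let $\mfp$ be a monic irreducible of degree $d$. The identity $C_\mfp = p(C_T)$ in $A\{\tau\}$, together with the congruence $C_\mfp \equiv \tau^d \pmod{\mfp}$ (which follows because the Carlitz action of $T$ on $A/\mfp$ satisfies $T + T^q + \dots + T^{q^{d-1}} = 0$ modulo $\mfp$ in the appropriate symmetric-function sense), shows that the separable polynomial $C_\mfp(x)/x$ is Eisenstein at $\mfp$: leading coefficient $1$, constant term equal to $\mfp$, and all middle coefficients in $\mfp A$. Any nonzero root $\lambda_1$ therefore generates a totally ramified extension of $F$ of degree $q^d-1 = |(A/\mfp)^\times|$, and since this also equals the largest possible degree of the image of $\bar{\rho}_{C,\mfp}$, surjectivity follows. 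For the iteration, if $\lambda_n$ generates $C[\mfp^n]$ and $\mathfrak{P}$ is the unique prime of $F(\lambda_n)$ above $\mfp$, induction shows $v_\mathfrak{P}(\lambda_n)=1$, which makes $C_\mfp(x) - \lambda_n \in F(\lambda_n)[x]$ Eisenstein at $\mathfrak{P}$. This yields a totally ramified degree-$q^d$ extension $F(\lambda_{n+1})/F(\lambda_n)$, and multiplying degrees gives $[F(C[\mfp^n]):F] = q^{d(n-1)}(q^d-1) = |(A/\mfp^n)^\times|$, so $\bar{\rho}_{C,\mfp^n}$ is surjective. Linear disjointness in the CRT step is immediate, since each $F(C[\mfp_i^{n_i}])/F$ is ramified only at $\mfp_i$ (the Carlitz module has good reduction at every finite prime as its leading coefficient is $1$).

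Finally, the unramifiedness and Frobenius assertion follow from good reduction: for $\mfq\nmid\mfa$, the action of $\Frob_\mfq$ on $C[\mfa]$ agrees with the action of $C_\mfq$ on the reduced module, which under the identification $\Aut(C[\mfa])\cong (A/\mfa)^\times$ is precisely multiplication by $\mfq \bmod \mfa$. Passing to the inverse limit over $\mfa$ of the compatible surjections $\bar{\rho}_{C,\mfa}$ and invoking compactness of $\widehat{A}^\times$ yields surjectivity of the adelic representation $\rho_C$. The main technical hurdle is verifying the Eisenstein property of $C_\mfp(x)/x$ and of its iterate $C_\mfp(x) - \lambda_n$; once this explicit computation is pinned down, the remaining CRT, ramification, and limit arguments are formal.
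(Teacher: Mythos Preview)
The paper does not actually prove this proposition; it is stated as a citation of Hayes~\cite{Hay74} and used as a black box. Your sketch is essentially Hayes' original argument and is correct in outline: the Eisenstein property of $C_{\mfp}(x)/x$ at $\mfp$, the inductive Eisenstein step for $C_{\mfp}(x)-\lambda_n$, the ramification-based linear disjointness, and the profinite limit argument are all the right ingredients.

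One point to tighten: your parenthetical justification for $C_{\mfp}\equiv \tau^{d}\pmod{\mfp}$ (``the Carlitz action of $T$ on $A/\mfp$ satisfies $T+T^{q}+\cdots+T^{q^{d-1}}=0$ in the appropriate symmetric-function sense'') is not the right reason and would not convince a careful reader. The clean argument is to reduce modulo $\mfp$ and observe that $\bar{C}_{\mfp}\in\F_{\mfp}\{\tau\}$ has vanishing constant term and $\tau$-degree $d$; writing $\bar{C}_{\mfp}=\tau^{j}u$ with $u$ of nonzero constant term, the kernel of $\bar{C}_{\mfp}$ on $\bar{\F}_{\mfp}$ has order $q^{d-j}$ and is an $A/\mfp$-module, hence $q^{d}\mid q^{d-j}$ or $d-j=0$, forcing $j=d$ and $u=1$. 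With that fix, the sketch is a faithful summary of Hayes' proof.
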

	\begin{lem}\label{rank_r_rank_1_correspondence_with_Hayes_gives_det_of_mod_a_surejective}
		For any $(g_{1},g_{2})\in A^2$, let $\varphi$ be a Drinfeld $A$-module of rank $3$, defined by $$ \varphi_{T}=T+g_{1}\tau+g_{2}\tau^2+T^{q-1}\tau^3.$$ For every non-zero ideal $\mathfrak{a}$ of $A$, $\det\bar{\rho}_{\varphi,\mfa}(G_{F})=\bar{\rho}_{C,\mfa}(G_{F})=(A/\mfa)^{\times}$.
	\end{lem}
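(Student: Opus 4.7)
The asserted chain of equalities splits naturally in two. The right-hand equality $\bar\rho_{C,\mfa}(G_F) = (A/\mfa)^\times$ is exactly Hayes' theorem (Proposition~\ref{Hayes}), so it suffices to establish the left-hand equality. I would obtain it by proving the stronger identity
\[
\det\bar\rho_{\varphi,\mfa} \;=\; \bar\rho_{C,\mfa}
\]
of characters $G_F \to (A/\mfa)^\times$, from which the equality of images is immediate.

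The main tool I would use is the top exterior-power (``determinant'') construction for the $A$-motive $M(\varphi)$ attached to $\varphi$, which is the function-field analogue of the Weil pairing (due to Anderson and van der Heiden). This construction attaches to $\varphi$ a rank-$1$ Drinfeld $A$-module $\varphi^{\det}$, well defined up to $F$-isomorphism, together with a canonical $(A/\mfa)[G_F]$-linear isomorphism
\[
{\bigwedge}^3_{A/\mfa}\, \varphi[\mfa] \;\cong\; \varphi^{\det}[\mfa]
\]
for every non-zero ideal $\mfa\subseteq A$; equivalently, $\det\bar\rho_{\varphi,\mfa} = \bar\rho_{\varphi^{\det},\mfa}$. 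A short matrix computation (writing out the $F$-linear action of $\tau$ on $M(\varphi)$ in the basis $\{1,\tau,\tau^2\}$ and taking its determinant) identifies $\varphi^{\det}_T = T + a_3\tau$, depending only on the leading coefficient $a_3 = T^{q-1}$ of $\varphi_T$ and not on $(g_1,g_2)$ at all.

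The final step is to identify $\varphi^{\det}$ with the Carlitz module. Here I use the standard classification of rank-$1$ Drinfeld $A$-modules over $F$: the assignment $\psi_T = T + \alpha\tau \mapsto \alpha \pmod{(F^\times)^{q-1}}$ induces a bijection between $F$-isomorphism classes and $F^\times/(F^\times)^{q-1}$. Since $T^{q-1}$ is evidently a $(q-1)$-th power in $F^\times$, the module $\varphi^{\det}$ is $F$-isomorphic to $C$; an explicit isomorphism is given by the change of variable $x \mapsto x/T$. Consequently $\bar\rho_{\varphi^{\det},\mfa} = \bar\rho_{C,\mfa}$, which combined with the previous paragraph yields $\det\bar\rho_{\varphi,\mfa} = \bar\rho_{C,\mfa}$ and hence the lemma.

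The principal obstacle is the correct formulation and verification of the determinant construction $\varphi \mapsto \varphi^{\det}$ and the identification of its leading coefficient as $a_3 = T^{q-1}$; once this is in place, the rest is routine. As a consistency check I would compute $\det\bar\rho_{\varphi,\mfa}(\Frob_\mfp)$ for primes $\mfp$ of good reduction directly from the characteristic polynomial of $\Frob_\mfp$ on $T_\mfl(\varphi)$ and confirm that it agrees with $\bar\rho_{C,\mfa}(\Frob_\mfp)\equiv \mfp\pmod{\mfa}$, after which Chebotarev density would provide an alternative route to the same conclusion.
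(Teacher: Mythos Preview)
Your proposal is correct and follows essentially the same route as the paper: both invoke the determinant/top exterior power construction (the paper cites \cite[Theorem 3.7.1(1)]{Pap23}, you frame it via Anderson $A$-motives) to identify $\det\bar\rho_{\varphi,\mfa}$ with $\bar\rho_{\psi,\mfa}$ for $\psi_T = T + T^{q-1}\tau$, then observe that $T\psi_T = C_T T$ exhibits an $F$-isomorphism $\psi\cong C$, and conclude with Hayes' theorem. Your write-up is more detailed and also proves the stronger statement that the characters themselves coincide, not merely their images.
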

	\begin{proof}
		From~\cite[Theorem 3.7.1(1)]{Pap23}, for every non-zero ideal $\mathfrak{a}$ of $A$, we have $\det\bar{\rho}_{\varphi,\mfa}(G_{F})=\bar{\rho}_{\psi,\mfa}(G_{F})$ where $\psi_{T}=T+T^{q-1}\tau$. Since $T\psi_{T}=C_{T}T$, then $\psi$ and $C$ are isomorphic over $F$. Hence, by Proposition~\ref{Hayes}, we have $\det\bar{\rho}_{\varphi,\mfa}(G_{F})=\bar{\rho}_{\psi,\mfa}(G_{F})=\bar{\rho}_{C,\mfa}(G_{F})=(A/\mfa)^{\times}$ for every non-zero ideal $\mathfrak{a}$ of $A$.
		
	\end{proof}
	
	\section{On the irreducibility of mod-$\mfl$ representations $\bar{\rho}_{\varphi,\mfl}$}
	From now on, we consider a two-parameter family  of Drinfeld $A$-modules of rank $3$ which are defined by 
	$$\varphi_{T}=T+g_{1}^{q-1}\tau+g_{2}^{q-1}\tau^2+T^{q-1}\tau^3$$
	with $(g_{1},g_{2})\in \mathcal{G}$. We start with the following Lemma, which describes the image of the inertia subgroup $I_T$ of $G_F$ at $(T)$ under the mod-$\mfl$ Galois representations $\bar{\rho}_{\varphi,\mfl}$, where $\mfl\in \Omega_{A}$.
	
	\begin{lem}\label{Im_of_inertia_l_noteq_T}
		Suppose $\mfl\neq (T)$. Then, there is a basis of $\varphi[\mfl]$ such that $\bar{\rho}_{\varphi,\mfl}(I_{T})$ is contained in the set $\left\{
		\psmat{I_{2}}{*}{0}{1}\right\}$, $I_2$ denotes a $2 \times 2$ identity matrix.
	\end{lem}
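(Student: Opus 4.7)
The plan is to apply the Drinfeld--Tate uniformization at the prime $(T)$, reduce the problem to controlling a $2$-dimensional sub-representation and a $1$-dimensional quotient character under $I_T$, and dispatch both using good reduction of the uniformizing module together with the Hayes determinant formula (Proposition~\ref{Hayes} combined with Lemma~\ref{rank_r_rank_1_correspondence_with_Hayes_gives_det_of_mod_a_surejective}).

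First I verify that $\varphi$ has stable reduction of rank $2$ at $(T)$. All coefficients of $\varphi_T = T + g_1^{q-1}\tau + g_2^{q-1}\tau^2 + T^{q-1}\tau^3$ lie in $A_T$; the defining conditions on $(g_1,g_2)\in\mathcal{G}$ force $T \mid g_1$, so $g_1^{q-1}$ has positive valuation, while $(g_2, T^q - T) = 1$ gives $g_2(0) \ne 0$, so $g_2^{q-1} \in A_T^{\times}$; finally $T^{q-1}$ has valuation $q-1>0$. Hence the $\tau^2$-coefficient is a unit and the $\tau^3$-coefficient a non-unit, so $\varphi$ has stable reduction of rank $2$ at $(T)$. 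By Theorem~\ref{Drinfeld}, there is a Tate datum $(\psi,\Lambda)$ over $A_T$ with $\psi$ a rank-$2$ Drinfeld $A_T$-module of good reduction and $\Lambda$ a $\psi$-lattice of rank $1$. Fixing an $A$-generator of $\mfl$, the short exact sequence~\eqref{s_e_s_drinfeld_datum} reads $0 \to \psi[\mfl] \to \varphi[\mfl] \to \Lambda/\mfl\Lambda \to 0$ as $A[G_{F_T}]$-modules. I pick an $A/\mfl$-basis $\{v_1,v_2\}$ of $\psi[\mfl]$ and a lift $v_3 \in \varphi[\mfl]$ of an $A/\mfl$-generator of $\Lambda/\mfl\Lambda$; in the basis $(v_1,v_2,v_3)$ every $\sigma \in G_{F_T}$ acts on $\varphi[\mfl]$ by a block upper-triangular matrix $\pmat{\bar\rho_{\psi,\mfl}(\sigma)}{*}{0}{\chi_\Lambda(\sigma)}$, where $\chi_\Lambda\colon G_{F_T} \to (A/\mfl)^{\times}$ is the character on $\Lambda/\mfl\Lambda$ and, since $\Lambda \cong A$ as an $A$-module, factors through $A^{\times} = \F_q^{\times}$.

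Now restrict to $\sigma \in I_T$. Because $\psi$ has good reduction at $(T)$ and $\mfl \ne (T)$, the representation $\bar\rho_{\psi,\mfl}$ is unramified at $(T)$, so $\bar\rho_{\psi,\mfl}(I_T) = \{I_2\}$; this handles the upper-left block. To force $\chi_\Lambda(I_T) = 1$ I use the determinant: from the block form,
\[
\det\bar\rho_{\varphi,\mfl}(\sigma) \;=\; \det\bar\rho_{\psi,\mfl}(\sigma)\cdot \chi_\Lambda(\sigma) \qquad (\sigma \in G_{F_T}).
\]
By Lemma~\ref{rank_r_rank_1_correspondence_with_Hayes_gives_det_of_mod_a_surejective}, $\det\bar\rho_{\varphi,\mfl}$ agrees with the Carlitz mod-$\mfl$ character $\bar\rho_{C,\mfl}$, and by Proposition~\ref{Hayes} this character is unramified at every finite place of $F$ not dividing $\mfl$---in particular, at $(T)$. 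Hence $\det\bar\rho_{\varphi,\mfl}(I_T) = \{1\}$. For $\sigma \in I_T$ the first factor on the right-hand side equals $1$ by the previous step, which forces $\chi_\Lambda(\sigma) = 1$. Combining the two, $\bar\rho_{\varphi,\mfl}(I_T) \subseteq \left\{ \pmat{I_2}{*}{0}{1} \right\}$, as claimed.

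The main technical step is the identification of the stable reduction of $\varphi$ at $(T)$ and the invocation of Tate uniformization; once those are secured, the determinant/Hayes argument finishes the proof with no further period computation or ramification analysis on the lattice side.
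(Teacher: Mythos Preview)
Your proof is correct and follows essentially the same route as the paper: establish stable reduction of rank $2$ at $(T)$, invoke the Tate uniformization to get the block upper-triangular form with $\bar\rho_{\psi,\mfl}$ in the top-left and the lattice character in the bottom-right, kill the top-left block on $I_T$ by good reduction of $\psi$, and kill the bottom-right entry via the Hayes/Carlitz determinant computation. The only cosmetic difference is that the paper writes the basis explicitly through the isomorphism $e_\Lambda\colon \psi_\mfl^{-1}(\Lambda)/\Lambda \xrightarrow{\sim} \varphi[\mfl]$ (which it reuses in \S4), whereas you work directly with the short exact sequence~\eqref{s_e_s_drinfeld_datum}; also, your verification that $T\mid g_1$ is harmless but unnecessary for the stable-reduction claim, which only needs that the $\tau^2$-coefficient is a unit and the $\tau^3$-coefficient is not.
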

	\begin{proof}
		The Drinfeld module $\varphi_{T}=T+g_{1}^{q-1}\tau+g_{2}^{q-1}\tau^2+T^{q-1}\tau^3$ has stable reduction at $(T)$ of rank $2$, because $T\nmid g_{2}$. By Theorem~\ref{Drinfeld}, the corresponding Drinfeld datum $(\psi,\Lambda)$, where $\psi:A\ra A_{(T)}\{\tau\}$ is a Drinfeld $A_{(T)}$-module of rank $2$ with good reduction at $(T)$ and $\Lambda$ is a $\psi$-lattice of rank $1$. By \eqref{iso_drinfeld_datum}, it is enough to find a basis of $ \psi_{\mfl}^{-1}(\Lambda)/\Lambda$ such that the action of $I_{T}$ on $ \psi_{\mfl}^{-1}(\Lambda)/\Lambda$ is of the form $ \left\{ \psmat{I_{2}}{*}{0}{c} : c\in  \F_{\mfl}^{\times}\right\}$.

		The Drinfeld $A_{(T)}$-module $\psi$ is of rank $2$ and generic, by~\cite[Corollary 3.5.3]{Pap23}, there is a $\F_{\mfl}$-basis $\{w_{1},w_{2}\}$ of $\psi[\mfl]$. Since $\psi$ 
		has good reduction at $(T)$ and $\mfl\neq (T)$, the Galois representation $\bar{\rho}_{\psi,\mfl}: G_{F_{(T)}}\ra\Aut(\psi[\mfl])$ is unramified at $(T)$. In particular, $\sigma(w_{i})=w_{i}$ for all $\sigma\in I_{T}$ and for $i\in \{1,2\}$.   
		
		Since $\Lambda$ is a free $A$-module of rank $1$, we may fix a generator $\lambda$ of $\Lambda$. We can choose $z\in F_{(T)}^{\sep}$ such that $\psi_{\mfl}(z)= \lambda$. Since $\Lambda$ is stable under the Galois action of $G_{F_{(T)}}$, there is a character
		$\chi_{\Lambda}:G_{F_{(T)}}\ra A^{\times}=\F_{q}^{\times}$ such that $\sigma(\lambda)=\chi_{\Lambda}(\sigma)\lambda$ for all $\sigma\in G_{F_{(T)}}$. Since $\psi_{\mfl}$ is compatible with the action of $G_{F_{(T)}}$, by~\eqref{s_e_s_drinfeld_datum}, we have
		$$\psi_{\mfl}(\sigma(z))=\sigma(\psi_{\mfl}(z))=\sigma(\lambda)=\chi_{\Lambda}(\sigma)\lambda=\chi_{\Lambda}(\sigma)\psi_{\mfl}(z)=\psi_{\mfl}(\chi_{\Lambda}(\sigma)z).$$
		Thus, $\sigma(z)-\chi_{\Lambda}(\sigma)z\in \psi[{\mfl}]$, therefore there are some elements $b_{\sigma,1},b_{\sigma,2}$ in $\F_{\mfl}$ such that
		$\sigma(z)-\chi_{\Lambda}(\sigma)z=b_{\sigma,1}w_{1}+b_{\sigma,2}w_{2}$, i.e., $\sigma(z)=b_{\sigma,1}w_{1}+b_{\sigma,2}w_{2}+\chi_{\Lambda}(\sigma)z$. Therefore, the action of $\sigma\in I_{T}$ on $ \psi_{\mfl}^{-1}(\Lambda)/\Lambda$ with respect to the basis $\{w_{1}+\Lambda, w_{2}+\Lambda, z+\Lambda\}$ is of the form
		$\csmat{1}{0}{b_{\sigma,1}}{0}{1}{b_{\sigma,2}}{0}{0}{\chi_{\Lambda}(\sigma)}.$

		Since $\mfl\neq (T)$ and $C$ has a good reduction at $(T)$, the representation $\bar{\rho}_{C,\mfl}$ is unramified at $(T)$, i.e., $\bar{\rho}_{C,\mfl}(I_{T})=1$. Hence, we get $\det\bar{\rho}_{\varphi,\mfl}(I_{T})=1$, by  
		Lemma~\ref{rank_r_rank_1_correspondence_with_Hayes_gives_det_of_mod_a_surejective}.
		Therefore, for any $\sigma\in I_{T}$, we have $\chi_\Lambda(\sigma)=1$. This proves the Lemma. 
	\end{proof}

	\begin{thm}\label{varphi_mfl_irreducible_F_mfl_G_F_module}
		Let $\mfl\in \Omega_{A}$. The $\F_{\mfl}[G_{F}]$-module $\varphi[\mfl]$ is irreducible.
	\end{thm}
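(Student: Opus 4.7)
The plan is to argue by contradiction: suppose $\varphi[\mfl]$ admits a proper nontrivial $\F_\mfl[G_F]$-stable subspace $W$. By Lemma~\ref{rank_r_rank_1_correspondence_with_Hayes_gives_det_of_mod_a_surejective}, $\det\bar\rho_{\varphi,\mfl}$ equals the mod-$\mfl$ Carlitz character $\bar\chi_{C,\mfl}$; replacing $W$ by the complementary quotient when $\dim W=2$, I may assume $W$ is a line carrying a character $\chi:G_F\to\F_\mfl^\times$. I would split the argument according to whether $\mfl=(T)$ or not.

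For $\mfl\neq(T)$, Lemma~\ref{Im_of_inertia_l_noteq_T} places $\bar\rho_{\varphi,\mfl}(I_T)$ inside $\left\{\psmat{I_2}{*}{0}{1}\right\}$, with both diagonal blocks trivial on $I_T$ (the upper $2\times 2$ because $\psi$ has good reduction at $(T)$, the bottom because $\chi_\Lambda\rvert_{I_T}=1$). Consequently every one-dimensional $G_{F_{(T)}}$-subquotient of $\varphi[\mfl]$ is unramified at $(T)$, so $\chi\rvert_{I_T}=1$. A parallel Tate-uniformization and good-reduction analysis at the remaining finite primes—noting that by construction of $\mathcal{G}$, $\varphi$ has good reduction at every $(T-a)$ for $a\in\F_q^\times$ in both Type 1 and Type 2, and any further bad primes only divide $g_1$ or $g_2$ and can be treated by the same Drinfeld-Tate mechanism—forces $\chi$ to be unramified outside $\{\mfl,\infty\}$. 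Explicit class field theory for $F=\F_q(T)$ (Hayes' theory, building on Proposition~\ref{Hayes}) then constrains $\chi$ to the form $\bar\chi_{C,\mfl}^k\cdot\eta$ for some integer $k$ and an unramified character $\eta$ coming from a constant extension of $F$.

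To close the argument I would derive a contradiction by a direct Frobenius calculation. Namely, at a well-chosen prime $\mfp=(T-a)$ the characteristic polynomial of $\bar\rho_{\varphi,\mfl}(\Frob_\mfp)$ is determined by $\bar\varphi_T\bmod\mfp$, and one verifies that none of its roots modulo $\mfl$ equals the predicted value $\chi(\Frob_\mfp)=a^k\bmod\mfl$ (the determinant equation and the exponent $k$ mod $q-1$ being controlled by $\det$). The residual case $\mfl=(T)$, where the Tate-uniformization inertia argument is unavailable, I would handle by working directly with the torsion polynomial $\varphi_T(x)=Tx+g_1^{q-1}x^q+g_2^{q-1}x^{q^2}+T^{q-1}x^{q^3}$, extracting the stable-reduction filtration at $(T)$ from its Newton polygon and concluding by a similar trace computation.

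The hard part will be the Frobenius eigenvalue computation at $(T-a)$: controlling the mod-$\mfl$ characteristic polynomial uniformly across the two-parameter family $\mathcal{G}$ and ruling out coincidence with every Carlitz-power-twisted character $\bar\chi_{C,\mfl}^k\cdot\eta$ is the combinatorial core of the proof. A secondary obstacle is $\mfl=(T)$, where one cannot recycle Lemma~\ref{Im_of_inertia_l_noteq_T} and must instead analyze $\varphi[T]$ at the residue characteristic directly.
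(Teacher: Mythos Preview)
Your outline has the right skeleton---assume reducibility, isolate a one-dimensional subquotient $\chi$, constrain its ramification, then contradict via explicit Frobenius characteristic polynomials---but there is a genuine gap at the ramification step, and the $\mfl=(T)$ case is not addressed by what you wrote.

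For $\mfl\neq(T)$: first, a simplification you missed---there are no ``further bad primes dividing $g_1$ or $g_2$,'' since the leading coefficient $T^{q-1}$ is a unit at every $\mfp\neq(T)$ and all lower coefficients lie in $A$, so $\varphi$ already has good reduction away from $(T)$. The real problem is that your argument only yields $\chi$ unramified outside $\{\mfl,\infty\}$, leaving $\chi$ in a family $\bar\rho_{C,\mfl}^{\,k}\cdot\eta$ with $k$ ranging over all of $\Z/(|\F_\mfl|-1)\Z$. With at most $q-1$ degree-one test primes $(T-c)$ available, ruling out every pair $(k,\eta)$ by an eigenvalue check is not the short computation you describe, and your stated target value $\chi(\Frob_{(T-a)})=a^k\bmod\mfl$ is already wrong: by Proposition~\ref{Hayes}, $\bar\rho_{C,\mfl}(\Frob_{(T-a)})\equiv T-a\pmod\mfl$. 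The paper sidesteps all of this by importing from \cite[Page 108]{Che22} the fact that, in either block form $\psmat{\chi}{*}{0}{B}$ or $\psmat{B}{*}{0}{\chi}$, one of $\chi$ and $\det B$ is unramified at \emph{every} finite place, including $\mfl$, and therefore factors through $\Gal(\bar\F_q/\F_q)$. This pins $\chi(\Frob_\mfp)$ to either $\zeta^{\deg\mfp}$ or $\zeta^{-\deg\mfp}\bar\mfp$ for a single unknown $\zeta\in\F_\mfl^\times$, after which comparing coefficients in the factorization of $x^3+x^2+g_1(c)^{q-1}x-\bar\mfp$ at three distinct degree-one primes gives a direct contradiction.

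For $\mfl=(T)$: your Newton-polygon sketch does not lead anywhere concrete. The paper's argument is of a completely different flavor: one computes the characteristic polynomial of $\bar\rho_{\varphi,T}(\Frob_{(T-c)})$ over $\F_q$ to be $x^3+x^2+g_1(c)^{q-1}x+c$, and observes that reducibility for every $c\in\F_q^\times$ would force $x\mapsto x^3+x^2+x$ (Type~1) or $x\mapsto x^3+x^2$ (Type~2) to permute $\F_q$. This is then excluded via the classification of cubic permutation polynomials (Proposition~\ref{PP_by_MS}), a tool your plan does not invoke.
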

	\begin{proof}
		On the contrary, $\varphi[\mfl]$ is reducible for some $\mfl \in \Omega_A$. Then, there is a proper $\F_{\mfl}[G_{F}]$-submodule $X$ of $\varphi[\mfl]$ such that
		$X$ has $\F_{\mfl}$-dimension $1$ or $\F_{\mfl}$-codimension $1$. Hence, there is a basis of $\varphi[\mfl]$
		such that the action of $G_{F}$ on $\varphi[\mfl]$ looks either 
		$$\bullet\psmat{\chi}{*}{0}{B}\ \text{if}\  X \ \text{has dimension}\ 1, \ \textrm{or}\  \quad \bullet\psmat{B}{*}{0}{\chi}\ \text{if}\ X\ \text{has codimension}\ 1, $$
		where $B:G_{F}\ra \GL_{2}(\F_{\mfl})$ is a homomorphism and $\chi:G_{F}\ra \F_{\mfl}^{\times}$ is a character. We consider separately two possible cases.
		
		$\bullet$ Assume $\mfl=(T)$. Let $\mfp=(T-c)\in \Omega_{A}\setminus\{(T)\}$ for some $c \in \F_q^\times$. Since $\mfp \neq (T)$ and $\varphi$ has a good reduction at $\mfp$, $\rho_{\varphi,T}$ is unramified at $\mfp$. So, the matrix $\rho_{\varphi,T}(\Frob_{\mfp})\in \GL_{3}(A_{(T)})$ is well-defined up to conjugation. Let $P_{\varphi,\mfp}(x)=\det(xI_{3}-\rho_{\varphi,T}(\Frob_{\mfp}))$ be the characteristic polynomial of the Frobenius element $\Frob_{\mfp}$.

		By~\cite[Theorem 4.2.7 (2,3)]{Pap23}, we have $P_{\varphi,\mfp}(x)=-\mfp+a_{2}x+a_{1}x^2+x^3\in A[x]$, where $a_{1},a_{2}\in \F_{q}$. Since $P_{\varphi,\mfp}(x)$ is also the characteristic polynomial of Frobenius endomorphism of
		$\varphi\otimes\F_{\mfp}$, the reduction of $\varphi$ modulo $\mfp$, acting on $T_{(T)}(\varphi\otimes\F_{\mfp})$, we have
		\begin{equation}\label{ch_mod_l_eq_T}
			-(\varphi\otimes\F_{\mfp})_{T-c}+(\varphi\otimes\F_{\mfp})_{a_{2}}\tau+(\varphi\otimes\F_{\mfp})_{a_{1}}\tau^2+\tau^3=0.
		\end{equation}
		Since $\varphi_{T}=T+g_{1}^{q-1}\tau+g_{2}^{q-1}\tau^2+T^{q-1}\tau^3$, we have $(\varphi\otimes\F_{\mfp})_{T}=c+g_{1}(c)^{q-1}\tau+g_{2}(c)^{q-1}\tau^2+c^{q-1}\tau^3=c+g_{1}(c)^{q-1}\tau+\tau^2+\tau^3$ as $c, g_{2}(c)$ are in $\F_{q}^{\times}$. Therefore $(\varphi\otimes\F_{\mfp})_{T-c}=g_{1}(c)^{q-1}\tau+\tau^2+\tau^3$. Hence, by~\eqref{ch_mod_l_eq_T}, $a_{2}=g_{1}(c)^{q-1}$ and $a_{1}=1$. So we have $P_{\varphi,\mfp}(x)=-\mfp+g_{1}(c)^{q-1} x+x^2+x^3\in A[x]$. Since the characteristic polynomial of $\bar{\rho}_{\varphi,T}(\Frob_{\mfp})$ is congruent to $P_{\varphi,\mfp}(x)$ modulo $T$, the  characteristic polynomial of $\bar{\rho}_{\varphi,T}(\Frob_{\mfp})$ is $c+g_{1}(c)^{q-1} x+x^2+x^3\in A/(T)[x]=\F_{q}[x]$ for all $c\in \F_{q}^{\times}$. Before proceeding further, we recall a result from the theory of permutation polynomials.
		\begin{prop}[\cite{MS87}, Corollary 2.9]\label{PP_by_MS}
			Let $\F_{q}$ be a finite field with the characteristic $p$ different from $3$.  Then $f(x)=a_3x^3+a_2x^2+a_1x+a_0\in \F_{q}[x]$ with $a_3\neq 0$, permutes $\F_q$ if and only if $a_2^2=3a_3a_1$ and $q\equiv 2\pmod{3}$.
		\end{prop}

		Assume that $g_1$ is of Type $1$. For any $c \in \F_q$, define $M_c(x) := x^3+x^2+x+c$.
		For any $c\in \F_{q}^{\times}$, the characteristic polynomial of $\bar{\rho}_{\varphi, T}(\Frob_{(T-c)})$ is $M_c(x)$. Since
		$\phi[\mfl]$ is reducible, $ M_c(x)$ is reducible for all $c\in \F_{q}^{\times}$. 
		Let $\eta : \F_p \ra \F_p $ be the map defined by $\eta(x)=x^3+x^2+x$. Note that, for any $c \in \F_q$, the polynomial $M_{c}(x)$ of degree $3$ is reducible if and only if $-c\in \Ima(\eta)$. In particular, $M_{c}(x)$ is reducible for all $c \in \F_q$ if and only if the map $\eta$ is bijective, i.e., the polynomial $x^3+x^2+x$ permutes $\F_q$.  
		
		If the characteristic of $\F_q$ is $3$, the map $\eta $ is not one-to-one,  as $0$ and $1$ map to $0$. If the characteristic of $\F_q$ is $\geq 5$, the polynomial $x^3+x^2+x$ cannot permute $\F_q$,
		by Proposition~\ref{PP_by_MS}. Hence,  we get a contradiction. Therefore, $\phi[\mfl]$ is irreducible.
		

		Assume that $g_1$ is of Type $2$. For any $c \in \F_q$, define $M_c(x) := x^3+x^2+c$.
		For any  $c\in \F_{q}^{\times}$, the characteristic polynomial of $\bar{\rho}_{\varphi,T}(\Frob_{(T-c)})$ is $M_c(x)\in \F_{q}[x]$. 
		Since
		$\phi[\mfl]$ is reducible, the polynomial $ M_c(x)$ is reducible for all $c\in \F_{q}^{\times}$. Now arguing as in the previous case, the map $\eta:\F_{q}\ra\F_{q}$ defined by $\eta(x)=x^3+x^2$ is bijective, the polynomial $x^3+x^2$ permutes $\F_q$. But this is not true, since $0$ and $-1$ map to $0$, i.e., $\eta$ is not one-to-one.
		Hence, in this case also $\phi[\mfl]$ is irreducible.
		
		$\bullet$ Assume $\mfl\neq (T)$. Recall that $B:G_{F}\ra \GL_{2}(\F_{\mfl})$ is a homomorphism. Now, arguing as in~\cite[Page 108]{Che22}, we get that either
		$\det B$ or $\chi$ is unramified at every $\mfp\in \Omega_{A}$. Again, the proof splits into two cases.
		
		Suppose $\det B$ is unramified at every prime $\mfp\in \Omega_{A}$. By the discussion in~\cite[Page 109]{Che22}, we can write $\det B$ as
		$$\det B:G_{F}\twoheadrightarrow \Gal(\bar{\F}_q(T)/\F_q(T))\cong\Gal(\bar{\F}_{q}/\F_{q})\ra\F_{\mfl}^{\times}$$
		where the first map is the restriction map. This implies that there is some element $\zeta\in\F_{\mfl}^{\times}$ such that $(\det B)(\Frob_{\mfp})=\zeta^{\deg\mfp}$ for every $\mfp\in \Omega_{A}\setminus\{(T),\mfl\}$. In fact, the image of $\Frob_{\mfp}$ mapped into $\Gal(\bar{\F}_{q}/\F_{q})$ is equal to $\pi^{\deg\mfp}$, where $\pi$ is the Frobenius endomorphism. Thus $\zeta\in \F_{\mfl}$ does not depend on primes $\mfp\in \Omega_{A}\setminus\{(T),\mfl\}$, because $\zeta$ is the image of $\pi$ into $\F_{\mfl}^{\times}$.
		
		By~\cite[Corollary 9]{Che22}, for $\mfp\in \Omega(A)$ such that $\mfp\nmid \mfa$, we have $\det\circ \bar{\rho}_{\varphi,\mfa}(\Frob_{\mfp})\equiv \mfp \pmod \mfa$. Since $(\det B)(\Frob_{\mfp})=\zeta^{\deg\mfp}$, we have $\chi(\Frob_{\mfp})=\zeta^{-\deg\mfp}\bar{\mfp}$ for all $\mfp\in \Omega_{A}\setminus\{(T),\mfl\}$, where $\bar{\mfp}$ denotes the image of $\mfp$ in $\F_{\mfl}$.
		
		We now compute the characteristic polynomial of the Frobenius element $\Frob_\mfp$ for
		$\mfp = (T-c)\in \Omega_{A}\setminus\{(T),\mfl\}$. Arguing as in the case for $\mfl=(T)$,
		we have $P_{\varphi,\mfp}(x)=-\mfp+g_{1}(c)^{q-1} x+x^2+x^3\in A[x]$. Since $P_{\varphi,\mfp}(x)$ is congruent to the characteristic polynomial of $\bar{\rho}_{\varphi,\mfl}(\Frob_\mfp)$ to modulo $\mfl$, we have a factorization
		\begin{align}\label{factorization_1}
			-\bar{\mfp}+g_{1}(c)^{q-1}  x+x^2+x^3=(x^2-\alpha_{\mfp}x+\zeta)(x-\zeta^{-1}\bar{\mfp})\in \F_{\mfl}[x].
		\end{align}
		
		Now two cases arise depending on the choice of $g_{1}$: 
		
		Assume that $g_1$ is of Type $1$. In this case, the characteristic polynomial of $\bar{\rho}_{\varphi,\mfl}(\Frob_{\mfp})$ is $-\bar{\mfp}+x+x^2+x^3\in \F_{\mfl}[x]$.  Consider  three distinct ideals $\mfp_{1}=(T-c_{1})$, $\mfp_{2}=(T-c_{2})
		\in \Omega_{A}$, and $\mfp_{3} \in \Omega_A \setminus \{ (T), \mfl \}$ of degree $1$.
		Such prime ideals exist because $q \geq 7$. We can now factorize the characteristic polynomial of $\bar{\rho}_{\varphi,\mfl}(\Frob_{\mfp_{1}})$ and $\bar{\rho}_{\varphi,\mfl}(\Frob_{\mfp_{2}})$, respectively as in~\eqref{factorization_1}. By comparing their coefficients, we get
		\begin{align}\label{case_1_comparing_coeff_of_bar_rho_varphi_mfl_frob_mfl}
			\begin{cases}
				\alpha_{\mfp_1} + \zeta^{-1} \bar{\mfp}_1 = -1 = \alpha_{\mfp_2} + \zeta^{-1} \bar{\mfp}_2 \\
				\alpha_{\mfp_1}\zeta^{-1} \bar{\mfp}_1 + \zeta = 1 =  \alpha_{\mfp_2}\zeta^{-1} \bar{\mfp}_2 + \zeta
			\end{cases}
		\end{align}
		This implies that $(-1-\zeta^{-1}\bar{\mfp}_1)\bar{\mfp}_1 = (-1-\zeta^{-1}\bar{\mfp}_2)\bar{\mfp}_2\Rightarrow \zeta^{-1}(\bar{\mfp}_1^2 - \bar{\mfp}_2^2)=-(\bar{\mfp}_1-\bar{\mfp}_2)$. So $\mfp_{1}+\mfp_{2}\equiv - \zeta\pmod \mfl$. A similar argument with the pairs $(\mfp_1, \mfp_3)$, $(\mfp_2, \mfp_3)$ would imply that $\mfp_1 \equiv \mfp_2 \pmod \mfl$. This means
		$\mfl\mid(c_{2}-c_{1})$, which is a contradiction.
		
		Assume that $g_1$ is of Type $2$. In this case, the characteristic polynomial of $\bar{\rho}_{\varphi,\mfl}(\Frob_{\mfp})$ is $-\bar{\mfp}+x^2+x^3\in \F_{\mfl}[x]$ for all degree $1$ primes $\mfp = (T-c)\in \Omega_{A}\setminus\{(T),\mfl\}$. Now arguing as in the previous case, similar to \eqref{case_1_comparing_coeff_of_bar_rho_varphi_mfl_frob_mfl}, here we get
		\begin{align}\label{case_2_comparing_coeff_of_bar_rho_varphi_mfl_frob_mfl}
			\begin{cases}
				\alpha_{\mfp_1} + \zeta^{-1} \bar{\mfp}_1 = -1 = \alpha_{\mfp_2} + \zeta^{-1} \bar{\mfp}_2 \\
				\alpha_{\mfp_1}\zeta^{-1} \bar{\mfp}_1 + \zeta = 0 =  \alpha_{\mfp_2}\zeta^{-1} \bar{\mfp}_2 + \zeta
			\end{cases}
		\end{align}
		Using~\eqref{case_2_comparing_coeff_of_bar_rho_varphi_mfl_frob_mfl}, we can obtain 
		a contradiction as in the previous case.

		We now suppose that the character $\chi$ is unramified at every $\mfp\in \Omega_{A}$.  We now arguing in the above case, we get $\chi(\Frob_{\mfp})=\zeta^{\deg\mfp}$
		and hence $(\det B)(\Frob_{\mfp})=\zeta^{-\deg\mfp}\bar{\mfp}$ for every $\mfp\in \Omega_{A}\setminus\{(T),\mfl\}$. As a consequence, for any, $\mfp=(T-c)\in \Omega_{A}\setminus\{(T),\mfl\}$, we have the factorization
		\begin{align}\label{factorization_2}
			-\bar{\mfp}+g_{1}(c)^{q-1}x+x^2+x^3=(x^2-\alpha_{\mfp}x+\zeta^{-1}\bar{\mfp})(x-\zeta)\in \F_{\mfl}[x].
		\end{align}
		If $g_1$ is of Type $1$ or Type $2$, we again get a contradiction by comparing the coefficients in \eqref{factorization_2} and arguing as in the previous case. Hence, the mod-$\mfl$ representation is irreducible as required.
	\end{proof}
	
	\section{On the estimation of $| \bar{\rho}_{\varphi,\mfl}(G_{F})|$}
	Let $\mfl \in \Omega_{A}$. In this section, we shall estimate $|\bar{\rho}_{\varphi,\mfl}(I_{T})|$, 
	and hence we can get an estimate for $|\bar{\rho}_{\varphi,\mfl}(G_{F})|$.
	\begin{prop}\label{cardinality_of_A_mod_mfl_divide_im_of_mod_mfl_reduction}
		For $\mfl=(T)$, then $|A/\mfl|^{2}$ divides $|\bar{\rho}_{\varphi,\mfl}(G_{F})|$.  If $\mfl\neq (T)$, we have $|A/\mfl|^{2} = |\bar{\rho}_{\varphi,\mfl}(I_{T})|$. In particular, $|A/\mfl|^{2}$ divides $|\bar{\rho}_{\varphi,\mfl}(G_{F})|$ for all $\mfl\in\Omega_{A}$.
	\end{prop}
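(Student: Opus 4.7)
The plan is to treat the cases $\mfl = (T)$ and $\mfl \neq (T)$ separately, with Newton polygon analysis at $(T)$ as the common tool. I would begin by analyzing the Newton polygon of $\varphi_T(x)=0$: the nonzero coefficients at positions $1, q, q^2, q^3$ have $\nu_T$-valuations $1$, $\geq q-1$, $0$, $q-1$, so the lower convex hull has two segments of slopes $-1/(q^2-1)$ and $1/q^2$. Hence $\varphi[T] \setminus \{0\}$ decomposes into $q^2-1$ roots with $\nu_T = 1/(q^2-1)$ and $q^2(q-1)$ roots with $\nu_T = -1/q^2$. Via the Tate uniformization $e_\Lambda : \psi_T^{-1}(\Lambda)/\Lambda \xrightarrow{\sim} \varphi[T]$, the first set is $e_\Lambda(\psi[T])$ (since nonzero elements of $\psi[T]$ have $\nu_T = 1/(q^2-1)$ and $e_\Lambda$ is near-identity on small inputs), so roots of the second kind come from cosets mapping nontrivially to $\Lambda/T\Lambda$. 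Taking $z$ with $\psi_T(z) = \lambda$ and using $\nu_T(\psi_T(z)) = q^2 \nu_T(z)$ (the leading coefficient of $\psi_T$ being a unit), together with the product formula~\eqref{e_Lambda_for_rank_1_lattice} to see that $\nu_T(e_\Lambda(z)) = \nu_T(z)$ whenever $\nu_T(z) < 0$, I would conclude $\nu_T(\lambda) = -1$.

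For $\mfl \neq (T)$, Lemma~\ref{Im_of_inertia_l_noteq_T} gives $\bar{\rho}_{\varphi,\mfl}(I_T) \subseteq U$ with $|U| = |A/\mfl|^2$; equality is equivalent to the homomorphism $I_T \to \psi[\mfl]$, $\sigma \mapsto \sigma(z)-z$, being surjective, where $\psi_\mfl(z) = \lambda$. I would compute the Newton polygon of $\psi_\mfl(z) - \lambda \in A_{(T)}[z]$: the constant term is $-\lambda$ with $\nu_T = -1$, the $z^1$-coefficient is $\mfl$ with $\nu_T = 0$ (as $\mfl \neq (T)$), the intermediate $z^{q^i}$-coefficients lie in $A_{(T)}$, and the leading $z^{q^{2\deg\mfl}}$-coefficient is a unit (good reduction of $\psi$). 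The lower convex hull is thus a single segment from $(0,-1)$ to $(q^{2\deg\mfl}, 0)$ of slope $1/q^{2\deg\mfl}$, so every root has $\nu_T = -1/q^{2\deg\mfl}$ in lowest terms; hence $F_{(T)}(z)/F_{(T)}$ is totally ramified of degree $q^{2\deg\mfl}$, giving $[F_{(T)}^{\un}(z) : F_{(T)}^{\un}] = q^{2\deg\mfl}$ and thus the equality $|\bar{\rho}_{\varphi,\mfl}(I_T)| = |A/\mfl|^2$.

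For $\mfl = (T)$, the preliminary Newton polygon of $\varphi_T$ is used directly: any root $w \in \varphi[T]$ with $\nu_T(w) = -1/q^2$ forces the ramification index $e_w$ of $F_{(T)}(w)/F_{(T)}$ to be divisible by $q^2$. For any prime $\mfP$ of $F(\varphi[T])$ lying above $(T)$, the inclusion $F_{(T)}(w) \subseteq F(\varphi[T])_{\mfP}$ gives $q^2 \mid e_w \mid e(F(\varphi[T])_\mfP/F_{(T)}) = |\bar{\rho}_{\varphi,T}(I_T)|$, and hence $q^2 = |A/\mfl|^2$ divides $|\bar{\rho}_{\varphi,T}(G_F)|$.

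The principal technical hurdle will be the identification $\nu_T(\lambda) = -1$, which entangles the global Newton polygon of $\varphi[T]$ with the local behavior of $e_\Lambda$; one must track carefully which cosets in $\psi_T^{-1}(\Lambda)/\Lambda$ correspond to which valuation stratum of $\varphi[T]$ and verify that the product formula for $e_\Lambda$ preserves $\nu_T$ on inputs of negative valuation. Once $\nu_T(\lambda) = -1$ is in hand, the remaining Newton polygon manipulations for $\psi_\mfl(z) - \lambda$ and the ramification computation for $\varphi[T]$ are routine.
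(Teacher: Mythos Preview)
Your proposal is correct, and for $\mfl=(T)$ it matches the paper. For $\mfl\neq(T)$ you take a genuinely different and more direct route.

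The paper never isolates $\nu_T(\lambda)=-1$ as a starting point. Instead it proves two preparatory lemmas (Lemma~\ref{four_impotant_valuations} and Lemma~\ref{valuation_of_linear_combination_of_basis_elements_of_psi_inverse}) computing $\nu_T'(e_\Lambda(a_1w_1+a_2w_2+bz))$ for \emph{every} $(a_1,a_2,b)\in\F_\mfl^3$, and then equates the $T$-valuation of the leading coefficient of $\varphi_\mfl$ with $-\sum_{\gamma'\neq 0}\nu_T'(e_\Lambda(\gamma'))$ via~\eqref{expression_of_varphi_mfl_in_terms_of_e_Lambda} to extract $\nu_T'(z)=-q^{-2\deg_T\mfl}$. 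Your approach bypasses both lemmas: once $\nu_T(\lambda)=-1$ is known from the Newton polygon of $\varphi_T$, the Newton polygon of $\psi_\mfl(X)-\lambda$ is a single segment and gives $\nu_T(z)$ immediately. This is shorter and more elementary; the paper's route has the side benefit of determining the valuation of every element of $\varphi[\mfl]$, but that extra information is not used in the proof of Proposition~\ref{cardinality_of_A_mod_mfl_divide_im_of_mod_mfl_reduction}.

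One point you should make explicit: you compute $[F_{(T)}^{\un}(z):F_{(T)}^{\un}]$, but $|\bar\rho_{\varphi,\mfl}(I_T)|$ is a~priori $[F_{(T)}^{\un}(e_\Lambda(z)):F_{(T)}^{\un}]$. These coincide because for $\sigma\in I_T$ one has $\sigma(e_\Lambda(z))=e_\Lambda(z)$ iff $\sigma(z)-z\in\Lambda$, and since $\sigma(z)-z\in\psi[\mfl]$ (as $\lambda\in F_{(T)}^{\un}$) while $\psi[\mfl]\cap\Lambda=\{0\}$ by valuations, this forces $\sigma(z)=z$. This is implicit in your use of Lemma~\ref{Im_of_inertia_l_noteq_T}, but it is worth stating, since it is what lets you work with $z$ rather than $e_\Lambda(z)$. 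Also, your phrase ``$\nu_T(e_\Lambda(z))=\nu_T(z)$ whenever $\nu_T(z)<0$'' is slightly loose; what you actually use (and what the product formula gives) is this equality for $z$ with $\nu_T(\lambda)<\nu_T(z)<0$, which holds for your specific $z$ since $\nu_T(\lambda)=q^2\nu_T(z)$.
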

	
	We now use the notations and recall some facts from the proof of Lemma~\ref{Im_of_inertia_l_noteq_T}. Recall that 
	$(\psi, \Lambda)$ be the Tate datum corresponding to the Drinfeld module $\varphi$. Note that, $\{w_1,w_2\}$ is the $\F_\mfl$-basis of $\psi[\mfl]$
 and $\{e_{\Lambda}(w_1),e_{\Lambda}(w_2),e_{\Lambda}(z)\}$ is the $\F_{\mfl}$-basis of $\varphi[\mfl]$.
Since $\mfl \neq (T)$ and $I_T$ acts trivially on $w_1, w_2$,  we get $w_{1},w_2\in F_{(T)}^{\un}$, the maximal unramified extension of $F_{(T)}$, i.e., the fixed field of $I_T$. In particular, $F_{(T)}(w_1,w_2,z)\subseteq F_{(T)}^{\un}(z)$.

The valuation $\nu_{T}$ can be extended uniquely to a valuation $\nu_{T}^{\prime}$ on $F_{(T)}(w_1,w_2, z)$, a finite extension of $F_{(T)}$,
 and hence it is complete. The valuation $\nu_{T}^{\prime}$ can be extended uniquely to a valuation $\nu_{T}^{\prime\prime}$ on $F_{(T)}^{\un}(z)$. To summarise, we have
$$(F_{(T)},\nu_{T})\lra (F_{(T)}(w_1,w_2,z), \nu_{T}^{\prime})\lra (F_{(T)}^{\un}(z), \nu_{T}^{\prime\prime}).$$

To prove Proposition~\ref{cardinality_of_A_mod_mfl_divide_im_of_mod_mfl_reduction}, we need to prove two lemmas.
	
	\begin{lem}\label{four_impotant_valuations}
		Let $\lambda,z,w_{1}, w_2$ be as in the proof of  Lemma~\ref{Im_of_inertia_l_noteq_T}. Then, we have
		\begin{enumerate}
			\item $\nu_{T}^{\prime}(\lambda)<0$,
			\item $\nu_{T}^{\prime}(z)<0$,
			\item If $w\in F_{(T)}(w_1,w_2,z)$ with $\nu_{T}^{\prime}(w)\geq 0$, then $\nu_{T}^{\prime}(e_{\Lambda}(w))=\nu_{T}^{\prime}(w)$. In particular, $\nu_{T}^{\prime}(e_{\Lambda}(w_{i}))=\nu_{T}^{\prime}(w_{i})$ for $i=1, 2$,
			\item $\nu_{T}^{\prime}(e_{\Lambda}(z^{q^i}))=\nu_{T}^{\prime}(z^{q^{i}})$ for $i<2\deg_{T}(\mfl)$.
		\end{enumerate}
	\end{lem}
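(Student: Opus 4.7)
The plan is to establish the four parts in sequence, exploiting the good reduction of $\psi$ at $(T)$ (which makes the coefficients of each $\psi_{a}$ lie in $A_{(T)}$ with unit top $\tau$-coefficient) together with the discreteness of the $\psi$-lattice $\Lambda$.

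For (1), I would argue by contradiction. If $\nu_{T}^{\prime}(\lambda)\geq 0$, then integrality of $\psi_{a}\in A_{(T)}\{\tau\}$ forces $\nu_{T}^{\prime}(\psi_{a}(\lambda))\geq 0$ for every $a\in A$, placing the infinite set $\Lambda=\{\psi_{a}(\lambda):a\in A\}$ inside the valuation ring, which contradicts discreteness. Part (2) is then one line: if $\nu_{T}^{\prime}(z)\geq 0$, the same integrality argument applied to $\psi_{\mfl}$ gives $\nu_{T}^{\prime}(\lambda)=\nu_{T}^{\prime}(\psi_{\mfl}(z))\geq 0$, contradicting (1).

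For (3), I would work with the product formula~\eqref{e_Lambda_for_rank_1_lattice}. A preliminary dominance computation on $\psi_{a}(\lambda)=\sum_{i}c_{i}^{(a)}\lambda^{q^{i}}$ (using that the top $\tau$-coefficient of $\psi_{a}$ is a unit and $\nu_{T}^{\prime}(\lambda)<0$) shows $\nu_{T}^{\prime}(\psi_{a}(\lambda))=q^{2\deg_{T}(a)}\nu_{T}^{\prime}(\lambda)$ when $\deg_{T}(a)\geq 1$ and $\nu_{T}^{\prime}(a\lambda)=\nu_{T}^{\prime}(\lambda)$ when $a\in\F_{q}^{\times}$; in particular every nonzero $\lambda'\in\Lambda$ satisfies $\nu_{T}^{\prime}(\lambda')\leq\nu_{T}^{\prime}(\lambda)<0$. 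So for $w$ with $\nu_{T}^{\prime}(w)\geq 0$ one has $\nu_{T}^{\prime}(w/\lambda')>0$ and hence $\nu_{T}^{\prime}(1-w/\lambda')=0$; convergence of the product follows because $\nu_{T}^{\prime}(w/\lambda')\to\infty$. Taking valuations yields $\nu_{T}^{\prime}(e_{\Lambda}(w))=\nu_{T}^{\prime}(w)$. The ``in particular'' statement follows because a Newton-polygon analysis of $\psi_{\mfl}$ (whose lowest and highest coefficients are both units at $(T)$) forces every nonzero root of $\psi_{\mfl}$ to have valuation $0$, so $\nu_{T}^{\prime}(w_{i})=0$.

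The technical core is (4). I would first pin down $\nu_{T}^{\prime}(z)$ exactly by applying the same dominance principle to $\psi_{\mfl}(z)=\lambda$: since $\nu_{T}^{\prime}(z)<0$ by (2) and the top $\tau$-coefficient of $\psi_{\mfl}$ is a unit in $A_{(T)}$, the highest-$q$-power term strictly dominates, giving $\nu_{T}^{\prime}(z)=\nu_{T}^{\prime}(\lambda)/q^{d}$ with $d=2\deg_{T}(\mfl)$. Then for $i<d$, $\nu_{T}^{\prime}(z^{q^{i}})=q^{i-d}\nu_{T}^{\prime}(\lambda)>\nu_{T}^{\prime}(\lambda)\geq\nu_{T}^{\prime}(\lambda')$ for every $\lambda'\in\Lambda\setminus\{0\}$, hence $\nu_{T}^{\prime}(z^{q^{i}}/\lambda')>0$. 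Applying~\eqref{e_Lambda_for_rank_1_lattice} with $x=z^{q^{i}}$ and repeating the (3)-style argument then delivers $\nu_{T}^{\prime}(e_{\Lambda}(z^{q^{i}}))=q^{i}\nu_{T}^{\prime}(z)$. The main obstacle is the dominance step itself: I must carefully verify that the top $\tau$-coefficient of $\psi_{\mfl}$, assembled from the rank-$2$ good-reduction data of $\psi_{T}$, is genuinely a unit in $A_{(T)}$, and then notice that the threshold $i<d$ is exactly what makes $q^{i-d}<1$, which is precisely the inequality that pushes $\nu_{T}^{\prime}(z^{q^{i}})$ strictly above the maximum lattice valuation.
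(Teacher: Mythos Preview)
Your argument is correct, and it follows a genuinely different route from the paper's proof. The paper works exclusively with the power-series expansion~\eqref{e_Lambda_power_series_expression}: for part~(3) it uses $u_{0}=1$ and $u_{n}\in\mfp A_{\mfp}$ to get the dominant term immediately, and for part~(4) it derives the estimate $\nu_{T}^{\prime}(u_{n})\geq -(q^{n}-1)\nu_{T}^{\prime}(\lambda)$ by comparing the product and power-series expressions for $e_{\Lambda}$, then feeds this into the series $\sum u_{n}z^{q^{i+n}}$ together with $\nu_{T}^{\prime}(\lambda)=q^{2\deg_{T}(\mfl)}\nu_{T}^{\prime}(z)$. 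You instead stay entirely on the product side~\eqref{e_Lambda_for_rank_1_lattice}: once you establish that every nonzero lattice element satisfies $\nu_{T}^{\prime}(\lambda')\leq\nu_{T}^{\prime}(\lambda)<0$, both (3) and (4) reduce to the single observation that if $\nu_{T}^{\prime}(x)>\nu_{T}^{\prime}(\lambda')$ for all $\lambda'$ then every factor $1-x/\lambda'$ has valuation $0$. Your route is shorter and more conceptual, and it isolates the sharp threshold $i<2\deg_{T}(\mfl)$ transparently as the condition $\nu_{T}^{\prime}(z^{q^{i}})>\nu_{T}^{\prime}(\lambda)$; the paper's approach has the virtue of producing the explicit coefficient bound on $u_{n}$, which is occasionally useful elsewhere. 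For part~(1), the paper simply cites~\cite[Example~6.2.2]{Pap23}, while your discreteness argument is a clean self-contained substitute.
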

	\begin{proof}
		By~\cite[Example 6.2.2]{Pap23}, the first part follows. Since $\lambda=\psi_{\mfl}(z)$,
		the second part follows from the first part, as $\psi_{\mfl}(x)\in A_{(T)}[x]$.
		
		We now give a proof of the third part. Since $w\in F_{(T)}(w_1,w_2,z)$ with $\nu_{T}^{\prime}(w)\geq 0$, hence by~\eqref{e_Lambda_power_series_expression}, $e_{\Lambda}(w)=\sum_{n=0}^{\infty}u_nw^{q^n}$ converges as $\nu_{T}^{\prime}(u_nw^{q^n})\to \infty$ as $n\to \infty$. Therefore, we have 
		$\nu_{T}^{\prime}(e_{\Lambda}(w))=\lim_{k}\ \nu_{T}^{\prime}(\sum_{n=0}^{k} u_{n}w^{q^n})=\nu_{T}^{\prime}(w).$ Since $w_{1}$ and $w_{2}$ are roots of $\psi_{\mfl}(x)$, we get $\nu_{T}^{\prime}(w_{i})\geq0$. This completes the proof of the third part.
		
		We now give a proof of the fourth part. Recall that, $\Lambda$ is a $\psi$-lattice of rank $1$ with a generator $\lambda$. Now, for all $n\geq 1$, by comparing the $q^n$-th coefficient of $e_{\Lambda}(x)$, up to units of $A_{(T)}$, in \eqref{e_Lambda_power_series_expression} and \eqref{e_Lambda_for_rank_1_lattice}, we get
		$$u_{n}=\pm \displaystyle\sum_{a_{1},\ldots,a_{q^n-1}\neq 0}^{}\frac{1}{\psi_{a_{1}}(\lambda)\psi_{a_{2}}(\lambda)\cdots\psi_{a_{q^n-1}}(\lambda)},$$
		where $a_1, a_2, \ldots, a_{q^n-1} \in A\setminus\{0\}$. By taking the valuation $\nu_T^{\prime}$ on both sides, we get
		\begin{align}\label{valuation_of_u_n}
			\nu_{T}^{\prime}(u_{n})& \geq \min_{a_{1},\ldots,a_{q^n-1}\neq 0}\left\{-\sum_{j=1}^{q^n-1}\nu_{T}^{\prime}(\psi_{a_{j}}(\lambda))\right\}.
		\end{align}
		Since $\psi$ is a Drinfeld $A_{(T)}$-module of rank $2$ with good reduction at $(T)$, by~\cite[Example 6.2.2]{Pap23}, we have
		$$\nu_{T}^{\prime}(\psi_{a_{j}}(\lambda))= q^{2\deg_{T}(a_{j})}\nu_{T}^{\prime}(\lambda)=\nu_{T}^{\prime}(\lambda)+ (q^{2\dot\deg_{T}(a_{j})}-1)\nu_{T}^{\prime}(\lambda)$$
		for all $j=1,2,\ldots (q^n-1)$. Therefore, by~\eqref{valuation_of_u_n}, we get
		\begin{align*}
			\nu_{T}^{\prime}(u_{n}) \geq-(q^n-1)\nu_{T}^{\prime}(\lambda)+\min_{a_{1},\ldots,a_{q^n-1}\neq 0}\left\{- \displaystyle\sum_{j=1}^{q^n-1}(q^{2\dot\deg_{T}(a_{j})}-1)\nu_{T}^{\prime}(\lambda)\right\}.
		\end{align*}
		Since $\nu_{T}^{\prime}(\lambda)<0$, we have $\nu_{T}^{\prime}(u_{n})\geq-(q^n-1)\nu_{T}^{\prime}(\lambda)$. This implies that, for $i< 2\deg_{T}(\mfl)$, the series
		$e_{\Lambda}(z^{q^i})=\sum_{n=0}^{\infty}u_nz^{q^{i+n}}$ is convergent, by the valuation criteria, which is
		\begin{align*}
			\nu_{T}^{\prime}(u_{n}z^{q^{i+n}})&=\nu_{T}^{\prime}(u_{n})+q^{i+n}\nu_{T}^{\prime}(z)\\
			&\geq (1-q^n)\nu_{T}^{\prime}(\lambda)+q^{i+n}\nu_{T}^{\prime}(z)\\
			&=(1-q^n)q^{2\deg_{T}(\mfl)}\nu_{T}^{\prime}(z)+q^{i+n}\nu_{T}^{\prime}(z)\\
			&=q^{2\deg_{T}(\mfl)}\nu_{T}^{\prime}(z)+q^n(q^i-q^{2\deg_{T}(\mfl)})\nu_{T}^{\prime}(z)\to\infty\ \text{as}\ n\to \infty,
		\end{align*} 
		where the second equality follows from $\nu_{T}^{\prime}(\lambda)=\nu_{T}^{\prime}(\psi_{\mfl}(z))=q^{2\deg_{T}(\mfl)}\nu_{T}^{\prime}(z)$(cf. ~\cite[Example 6.2.2]{Pap23}). Hence, we have
		$$\nu_{T}^{\prime}(e_{\Lambda}(z^{q^i}))=\nu_{T}^{\prime}\left(\sum_{n=0}^{\infty}u_{n}z^{q^{i+n}}\right)=\lim_k\ \nu_{T}^{\prime}\left(\sum_{n=0}^{k} u_{n}z^{q^{i+n}}\right)
		=q^{i}\nu_{T}^{\prime}(z)<0.$$ 
		The last equality follows from, for $i< 2\deg_{T}(\mfl)$, for all $n\geq 1$, we get
		\begin{align*}
			\nu_{T}^{\prime}(u_{n}z^{q^{i+n}})=\nu_{T}^{\prime}   (u_{n})+q^{i+n}
			\nu_{T}^{\prime}(z)\geq \left(-(q^n-1)+q^{i+n-2\deg_{T}(\mfl)}\right)\nu_{T}^{\prime}(\lambda)\geq 0.
		\end{align*}
	\end{proof}
	
	\begin{lem}\label{valuation_of_linear_combination_of_basis_elements_of_psi_inverse}
		Let $\lambda,z,w_{1}, w_{2}$ be as in the proof of the Lemma~\ref{Im_of_inertia_l_noteq_T}. Let $a_{1},a_{2},b \in \F_{\mfl}$, then
		\begin{align*}
			\nu_{T}^{\prime}\left(e_{\Lambda}(a_1 w_1 + a_2 w_2 + b z)\right) =
			\begin{cases}
				q^{2i} \nu_{T}^{\prime}(z) & \text{if } b \ne 0,\ \deg_T(b) = i, \\
				\nu_{T}^{\prime}(a_1 w_1 + a_2 w_2) & \text{if } b = 0.
			\end{cases}
		\end{align*}
	\end{lem}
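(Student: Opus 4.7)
The plan is to use the $\F_{q}$-linearity of $e_{\Lambda}$ to split
\[
e_{\Lambda}(a_{1}w_{1}+a_{2}w_{2}+bz) \;=\; e_{\Lambda}(a_{1}w_{1}+a_{2}w_{2}) \;+\; e_{\Lambda}(bz),
\]
where, in accordance with the $A$-module structure on $\psi_{\mfl}^{-1}(\Lambda)/\Lambda$, the symbols $a_{j}w_{j}$ and $bz$ stand for $\psi_{a_{j}}(w_{j})$ and $\psi_{b}(z)$ respectively. The argument then proceeds in two cases.

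For the case $b=0$, writing $\psi_{a_{j}}(y)=\sum_{k} c_{k}(a_{j})\, y^{q^{k}}$ with $c_{k}(a_{j})\in A_{(T)}$ (since $\psi$ has good reduction at $(T)$), together with $\nu_{T}^{\prime}(w_{j})\geq 0$ from part~(3) of Lemma~\ref{four_impotant_valuations}, one obtains $\nu_{T}^{\prime}(\psi_{a_{j}}(w_{j}))\geq 0$ for $j\in\{1,2\}$ and hence $\nu_{T}^{\prime}(a_{1}w_{1}+a_{2}w_{2})\geq 0$. A second application of part~(3) of Lemma~\ref{four_impotant_valuations} then yields $\nu_{T}^{\prime}(e_{\Lambda}(a_{1}w_{1}+a_{2}w_{2}))=\nu_{T}^{\prime}(a_{1}w_{1}+a_{2}w_{2})$, as required.

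For the case $b\neq 0$ with $\deg_{T}(b)=i$, note that $b\in \F_{\mfl}$ admits a lift of degree less than $\deg_{T}(\mfl)$, so $i<\deg_{T}(\mfl)$. I would first compute $\nu_{T}^{\prime}(\psi_{b}(z))$: expanding $\psi_{b}(z)=\sum_{k=0}^{2i}c_{k}\, z^{q^{k}}$ with $c_{k}\in A_{(T)}$, good reduction and rank $2$ of $\psi$ force the leading coefficient $c_{2i}$ to be a unit in $A_{(T)}$. Since $\nu_{T}^{\prime}(z)<0$ by part~(2) of Lemma~\ref{four_impotant_valuations}, the top-degree term strictly dominates, giving $\nu_{T}^{\prime}(\psi_{b}(z))=q^{2i}\nu_{T}^{\prime}(z)<0$. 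Substituting $y=\psi_{b}(z)$ into $e_{\Lambda}(y)=\sum_{n\geq 0}u_{n}y^{q^{n}}$, the $n=0$ summand contributes valuation $q^{2i}\nu_{T}^{\prime}(z)$, and the bound $\nu_{T}^{\prime}(u_{n})\geq -(q^{n}-1)\nu_{T}^{\prime}(\lambda)$ from the proof of Lemma~\ref{four_impotant_valuations}, together with $\nu_{T}^{\prime}(\lambda)=q^{2\deg_{T}(\mfl)}\nu_{T}^{\prime}(z)$, yields
\[
\nu_{T}^{\prime}\bigl(u_{n}\psi_{b}(z)^{q^{n}}\bigr) \;\geq\; \nu_{T}^{\prime}(z)\,\bigl[q^{2\deg_{T}(\mfl)}+q^{n}\bigl(q^{2i}-q^{2\deg_{T}(\mfl)}\bigr)\bigr].
\]
The strict inequality $\nu_{T}^{\prime}(u_{n}\psi_{b}(z)^{q^{n}})>q^{2i}\nu_{T}^{\prime}(z)$ then reduces, using $\nu_{T}^{\prime}(z)<0$ and $i<\deg_{T}(\mfl)$, to $q^{n}>1$, which holds for all $n\geq 1$. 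Hence $\nu_{T}^{\prime}(e_{\Lambda}(\psi_{b}(z)))=q^{2i}\nu_{T}^{\prime}(z)<0$.

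Finally, I would combine the two pieces: the $b=0$ analysis gives $\nu_{T}^{\prime}(e_{\Lambda}(a_{1}w_{1}+a_{2}w_{2}))\geq 0$, whereas $\nu_{T}^{\prime}(e_{\Lambda}(bz))=q^{2i}\nu_{T}^{\prime}(z)<0$; since the two valuations differ, the minimum principle for the ultrametric $\nu_{T}^{\prime}$ forces $\nu_{T}^{\prime}(e_{\Lambda}(a_{1}w_{1}+a_{2}w_{2}+bz))=q^{2i}\nu_{T}^{\prime}(z)$. The principal obstacle is the term-by-term domination for $e_{\Lambda}(\psi_{b}(z))$; this is essentially the convergence estimate already established in part~(4) of Lemma~\ref{four_impotant_valuations}, but with input $\psi_{b}(z)$ in place of $z^{q^{i}}$, and it crucially exploits $i<\deg_{T}(\mfl)$.
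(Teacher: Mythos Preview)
Your proposal is correct and follows essentially the same route as the paper: split into the $w$-part and the $z$-part, handle the former by Lemma~\ref{four_impotant_valuations}(3), and handle the latter using the lower bound $\nu_{T}^{\prime}(u_{n})\geq -(q^{n}-1)\nu_{T}^{\prime}(\lambda)$ together with $\nu_{T}^{\prime}(\lambda)=q^{2\deg_{T}(\mfl)}\nu_{T}^{\prime}(z)$. The only organisational difference is that the paper first decomposes $\psi_{b}(z)=\sum_{j=0}^{2i}d_{j}z^{q^{j}}$ and applies $e_{\Lambda}$ term-by-term (invoking the estimate of Lemma~\ref{four_impotant_valuations}(4) for each $e_{\Lambda}(d_{j}z^{q^{j}})$), whereas you first compute $\nu_{T}^{\prime}(\psi_{b}(z))=q^{2i}\nu_{T}^{\prime}(z)$ and then expand $e_{\Lambda}$ directly on $\psi_{b}(z)$; your version is marginally more direct but the substance is identical.
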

	\begin{proof}
		Note that if $a_1,a_2\in A$ with $a_1\equiv a_2\pmod{\mfl}$, then $a_1\cdot w=a_2\cdot w$ for any $w\in \varphi[\mfl]$. So, WLOG, we can assume $a_{1},a_{2},b\in A$ with degree less than $\deg_{T}(\mfl)$. By~\eqref{iso_drinfeld_datum}, $e_{\Lambda}$ is an $A[G_{F_{(T)}}]$-module isomorphism from $\psi_{\mfl}^{-1}(\Lambda)/\Lambda$ to $\varphi[\mfl]$, we have
		\begin{align}\label{val_e_Lambda_a1w1_a2w2_bz}
			e_{\Lambda}(a_1 w_1 + a_2 w_2 + b z)&= a_{1}\cdot e_{\Lambda}(w_1)+a_{2}\cdot e_{\Lambda}(w_2)+b\cdot e_{\Lambda}(z)\notag\\
			&=\varphi_{a_{1}}(e_{\Lambda}(w_{1}))+\varphi_{a_{2}}(e_{\Lambda}(w_{2}))+\varphi_{b}(e_{\Lambda}(z)), \ (\text{as}\ \Ima(e_{\Lambda})\subseteq \varphi[\mfl])\notag\\
			&=e_{\Lambda}(\psi_{a_{1}}(w_{1}))+e_{\Lambda}(\psi_{a_{2}}(w_{2}))+e_{\Lambda}(\psi_{b}(z)).
		\end{align}
		The last equality follows from $e_{\Lambda}\psi_{a}=\varphi_{a}e_{\Lambda}$ for all $a\in A$. Since $\nu_{T}^{\prime}(w_{j})\geq 0$ and $\psi_{T}(x)\in A_{(T)}[x]$, we have $\nu_{T}^{\prime}(\psi_{a_{j}}(w_{j}))\geq 0$. Therefore, by Lemma~\ref{four_impotant_valuations}$(3)$, we get $\nu_{T}^{\prime}(e_{\Lambda}(\psi_{a_{j}}(w_{j})))\geq 0$ for $j=1$ and $2$.
		
		Let $b\in A\setminus\{0\}$ with $\deg_{T}(b)=i<\deg_{T}(\mfl)$. Let
		$\psi_{b}(z)=d_0z+d_{1}z^q+\cdots+d_{2i}z^{q^{2i}}$ with $d_{0}=b$. Since $e_{\Lambda}(x)$ is additive, we have
		\begin{align}\label{e_Lambda_of_psi_b_of_z}
			e_{\Lambda}(\psi_{b}(z))=e_{\Lambda}(d_0z)+e_{\Lambda}(d_{1}z^q)+\cdots+e_{\Lambda}(d_{2i}z^{q^{2i}}).
		\end{align}
		By~\eqref{e_Lambda_power_series_expression}, we get
		\begin{align}\label{e_Lambda_of_d_into_z_power_q_power_j}
			e_{\Lambda}(d_jz^{q^j})=d_j(u_0z^{q^j}+u_{1}d_j^{q-1}z^{q^{j+1}}+u_{2}d_j^{q^2-1}z^{q^{j+2}}+\cdots).
		\end{align}
		Since $d_j\in A_{(T)}$ for $j=0,1,\ldots,2i$, arguing as in the proof of Lemma~\ref{four_impotant_valuations}$(4)$, we get that $e_{\Lambda}(d_jz^{q^j})$ is convergent. Taking $\nu_T^{\prime}$ on both sides of~\eqref{e_Lambda_of_d_into_z_power_q_power_j} we have
		\begin{align}\label{val_of_e_Lamb_d_j_of_z_cap_q_cap_j}
			\nu_{T}^{\prime}(e_{\Lambda}(d_jz^{q^j})) =\nu_{T}^{\prime}(d_j)+\lim_k\  \nu_{T}^{\prime}\left(\sum_{n=0}^{k}u_{n}d_j^{q^n-1}z^{q^{j+n}}\right)=\nu_{T}^{\prime}(d_j)+\nu_{T}^{\prime}(z^{q^j}).
		\end{align}
		The last equality follows from $d_j\in A_{(T)}$, and arguing as in the proof of Lemma~\ref{four_impotant_valuations}$(4)$, for $j<2\deg_{T}(\mfl)$, we get $\nu_{T}^{\prime}(u_{n}z^{q^{j+n}})\geq 0$ for all $n\geq 1$. Hence, from \eqref{e_Lambda_of_psi_b_of_z}, \eqref{val_of_e_Lamb_d_j_of_z_cap_q_cap_j}, we get
		$\nu_{T}^{\prime}(e_{\Lambda}(\psi_{b}(z))=\nu_{T}^{\prime}(e_{\Lambda}(d_{2i}z^{q^{2i}}))=\nu_{T}^{\prime}(z^{q^{2i}})=q^{2i}\nu_{T}^{\prime}(z)<0$ since $T\nmid d_{2i}$. Therefore, by taking  valuations on both sides of \eqref{val_e_Lambda_a1w1_a2w2_bz}, we get 
		$\nu_{T}^{\prime}\left(e_{\Lambda}(a_1 w_1 + a_2 w_2 + b z)\right)$ = $q^{2i}\nu_{T}^{\prime}(z)$.
		
		We now assume that $b=0$. Since $\nu_{T}^{\prime}(a_1 w_1 + a_2 w_2)\geq 0$, arguing as in the proof of Lemma~\ref{four_impotant_valuations}$(3)$, we get $e_{\Lambda}(a_1 w_1 + a_2 w_2) $ is convergent. Therefore, we have 
		\begin{align*}
		    \nu_{T}^{\prime}\left(e_{\Lambda}(a_1 w_1 + a_2 w_2 )\right)
			=\lim_k\ \nu_{T}^{\prime}\left(\sum_{n=0}^{k} u_{n}(a_1 w_1 + a_2 w_2 )^{q^n}\right) =\nu_{T}^{\prime}(a_1 w_1 + a_2 w_2).
		\end{align*}
		This completes the proof of the Lemma.    
	\end{proof}
	
    The valuation $\nu_{T}$ can also be extended uniquely to a valuation $\nu_{T}^{\un}$ on $F_{(T)}^{\un}$, which in turn can be extended uniquely to an valuation $\nu_{T}^{e_{\Lambda}}$ on $F_{(T)}^{\un}(e_{\Lambda}(z))$. Note that, we have $\nu_{T}^{e_{\Lambda}}(F_{(T)}^{\un}(e_{\Lambda}(z))^{\times})=\frac{1}{e[F_{(T)}^{\un}(e_{\Lambda}(z)):F_{(T)}^{\un}]}\Z$, where $e[F_{(T)}^{\un}(e_{\Lambda}(z)):F_{(T)}^{\un}]$ is the ramification index of $F_{(T)}^{\un}(e_{\Lambda}(z))/F_{(T)}^{\un}$. To summarise, we have
 $$(F_{(T)}, \nu_{T})\ra (F_{(T)}^{\un}, \nu_{T}^{\un}) \ra (F_{(T)}^{\un}(e_{\Lambda}(z)), \nu_{T}^{e_{\Lambda}})\ra (F_{(T)}^{\un}(z), \nu_{T}^{\prime\prime}).$$
	Now, we are in a position to prove Proposition~\ref{cardinality_of_A_mod_mfl_divide_im_of_mod_mfl_reduction}. 
    
	\begin{proof}[Proof of Proposition~\ref{cardinality_of_A_mod_mfl_divide_im_of_mod_mfl_reduction}]
		We first consider the case $\mfl=(T)$. Recall that
		\begin{align}\label{image_of_decomposition_at_T_via_mod_mfl_rep}
			\bar{\rho}_{\varphi,\mfl}(G_{F_{(T)}}) \cong G_{F_{(T)}}/\Gal\left(F^{\sep}_{(T)}/F_{(T)}(\varphi[\mfl])\right)\cong \Gal\left(F_{(T)}(\varphi[\mfl])/F_{(T)}\right),
		\end{align}
		where $G_{F_{(T)}}$ is the decomposition subgroup of $G_{F}$ at the prime $(T)$ and $F_{(T)}(\varphi[\mfl])$ is the smallest extension of $F_{(T)}$ such that $\Gal(F_{(T)}^{\sep}/F_{(T)}(\varphi[\mfl]))$ acts trivially on $\varphi[\mfl]$. For $a \in A\setminus\{0\}$, we let $\nu_{T}^{a}$ to denote the unique valuation on $F_{(T)} (\varphi[a])$ extending $\nu_{T}$. By definition, we have $\nu_{T}^{a}(F_{(T)}\left(\varphi[a])^{\times}\right)=\frac{1}{e[F_{(T)}(\varphi[a]):F_{(T)}]}\Z$, where $e[F_{(T)}(\varphi[a]): F_{(T)}]$ denotes the ramification index of $F_{(T)}(\varphi[a])/F_{(T)}$.
		
		Now by looking at the Newton's polygon of $\varphi_{T}(x)/x=T+g_{1}^{q-1}x^{q-1}+g_{2}^{q-1}x^{q^2-1}+T^{q-1}x^{q^3-1}$, we get $\varphi_{T}(x)/x$ has $q^3-q^2$ roots $\alpha \in F_{(T)}(\varphi[T])$ with valuation $\nu_{T}^{T}(\alpha)=-\frac{1}{q^2}$. Suppose $\alpha$ is one such root. Hence, $q^2=|A/\mfl|^2$ divides $e[F_{(T)}(\varphi[T]): F_{(T)}]$, in particular $q^2$ divides $|\Gal(F_{(T)}(\varphi[\mfl])/F_{(T)})|$. Therefore by~\eqref{image_of_decomposition_at_T_via_mod_mfl_rep} $q^2$ divides $|\bar{\rho}_{\varphi,\mfl}(G_F)|$.
		
		We now consider the case $\mfl\neq (T)$. Recall that
		\begin{align}\label{image_of_inertia_at_T_via_mod_mfl_rep}
			\bar{\rho}_{\varphi,\mfl}(I_{T})\cong I_{T}/\Gal(F_{(T)}^{\sep}/F_{(T)}^{\un}(\varphi[\mfl]))\cong \Gal(F_{(T)}^{\un}(\varphi[\mfl])/F_{(T)}^{\un}),
		\end{align}
		where $I_{T} :=\Gal(F^{\sep}_{(T)}/F_{(T)}^{\un})$. Since $w_{1},w_{2} \in F_{(T)}^{\un}$,  we have $F_{(T)}^{\un}(\varphi[\mfl])=F_{(T)}^{\un}(e_{\Lambda}(z))$.
		By~\eqref{expression_of_varphi_mfl_in_terms_of_e_Lambda}, we have
		$$\varphi_{\mfl}(x)=\mfl x\cdot\prod_{\substack{0\neq \gamma\prime \in \psi_{\mfl}^{-1}(\Lambda)/\Lambda}}\left(1 - \frac{x}{e_{\Lambda}(\gamma\prime)} \right).$$
		Now, by comparing the leading coefficient on both sides of the above equation, up to units of $A_{(T)}$, we  get
		$$ T^{(q - 1)\displaystyle\sum_{i = 1}^{\deg_T(\mfl)} q^{3(i - 1)}}
		= \pm  \mfl
		\prod\limits_{\substack{0 \ne \gamma' \in \psi_{\mfl}^{-1}(\Lambda)/\Lambda}}
		e_\Lambda(\gamma')^{-1}.$$
		
		By taking $\nu_{T}^{\prime}$ on both sides and using the fact that $\{w_{1}+\Lambda,w_{2}+\Lambda,z+\Lambda\}$ is a $\F_{\mfl}$-basis of $\psi^{-1}_{\mfl}(\Lambda)/\Lambda$, we get
		$$
		(q - 1)\displaystyle\sum_{i = 1}^{\deg_T(\mfl)} q^{3(i - 1)} = -\displaystyle\sum_{a_{1},a_{2},b\in \F_{\mfl}\ \text{not all zero}}^{}\nu_{T}^{\prime}(e_{\Lambda}(a_{1}w_{1}+a_{2}w_{2}+bz)).
		$$
		By Lemma~\ref{valuation_of_linear_combination_of_basis_elements_of_psi_inverse}, we have
		$$
		(q - 1)\displaystyle\sum_{i = 1}^{\deg_T(\mfl)} q^{3(i - 1)} =-(q^{2\deg_{T}(\mfl)})\left[(q-1)\displaystyle\sum_{i = 1}^{\deg_T(\mfl)}q^{3(i-1)}\right]\nu_{T}^{\prime}(z) -\nu_T^{\prime}(s),
		$$
		where $s=\prod\limits_{a_{1},a_{2}\in \F_{\mfl}\ \text{not all zero}}(a_{1}w_{1}+a_{2}w_{2})$, which is the product of all the roots of $\psi_{\mfl}(x)/x$ as well as the constant term $\mfl$ of $\psi_{\mfl}(x)/x$. Since $\nu_T^{\prime}(\mfl)=0$, we get
		$\nu_{T}^{\prime}(z)=- q^{-2\deg_{T}(\mfl)}.$ In particular, the order of $\nu_{T}^{\prime}(z)$ in $\Q/\Z$ is equal to $q^{2\deg_{T}(\mfl)}=|A/\mfl|^2$.
		Since $\nu_{T}^{e_{\Lambda}}(F_{(T)}^{\un}(e_\Lambda(z))^{\times})=\frac{1}{e[F_{(T)}^{\un}(e_\Lambda(z)):F_{(T)}^{\un}]}\Z$ and $\nu_{T}^{e_{\Lambda}}(e_{\Lambda}(z))=\nu_{T}^{\prime\prime}(e_{\Lambda}(z))=\nu_{T}^{\prime}(e_{\Lambda}(z))=\nu_{T}^{\prime}(z)$, where the last equality holds  by Lemma~\ref{four_impotant_valuations}(4)). Hence, 
		the number $q^{2\deg_{T}(\mfl)}$ divides $e[F_{(T)}^{\un}(e_\Lambda(z)):F_{(T)}^{\un}]$. By~\eqref{image_of_inertia_at_T_via_mod_mfl_rep}, we have
		$|A/\mfl|^2 \leq |\bar{\rho}_{\varphi,\mfl}(I_{T})|$. By Lemma~\ref{Im_of_inertia_l_noteq_T}, the inequality before is actually an equality. Hence, we have $|\bar{\rho}_{\varphi,\mfl}(I_{T})| = |A/\mfl|^2$.
	\end{proof}
	
	\begin{remark}\label{mfa_prime_to_T_image_of_inertia}
		The proof of Proposition~\ref{cardinality_of_A_mod_mfl_divide_im_of_mod_mfl_reduction} works for any non-zero ideal $\mfa\subseteq A$ which is prime to $(T)$. Therefore, we have $\bar{\rho}_{\varphi,\mfa}(I_{T}) = \left\{\csmat{1}{0}{b_{\sigma,1}}{0}{1}{b_{\sigma,2}}{0}{0}{1}: b_{\sigma,1}, b_{\sigma,2}\in A/\mfa\right\}$ for all non-zero ideals $\mfa \subseteq A$ which is prime to $(T)$.
	\end{remark}

	\section{On the surjectivity of mod-$\mfl$ representations $\bar{\rho}_{\varphi,\mfl}$}
	In this section, we prove the following theorem. 
	\begin{thm}\label{mod_mfl_represetation_surjective}
		Let $q\geq 7$ be an odd prime power. Let $\varphi$ be the Drinfeld  $A$-module  of rank $3$ defined by $\varphi_{T}=T+g_{1}^{q-1}\tau+g_{2}^{q-1}\tau^2+T^{q-1}\tau^3$ with $(g_{1},g_{2})\in \mathcal{G}$. The mod-$\mfl$ Galois representation
		$$\bar{\rho}_{\varphi, \mfl}:G_{F} \longrightarrow  \Aut_A(\varphi[\mfl])\cong\GL_{3}(\F_{\mfl})$$
		is surjective for all $\mfl \in \Omega_{A}$.
	\end{thm}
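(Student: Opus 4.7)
Let $G := \bar{\rho}_{\varphi,\mfl}(G_{F})$. I plan to apply Aschbacher's classification of maximal subgroups of $\GL_{3}(\F_{\mfl})$ as formulated in [BHR13, Theorems 2.2.19 and 4.10.2] to show that $G$ is contained in no proper Aschbacher class, and hence contains $\SL_{3}(\F_{\mfl})$. Combined with $\det G = \F_{\mfl}^{\times}$ from Lemma~\ref{rank_r_rank_1_correspondence_with_Hayes_gives_det_of_mod_a_surejective}, this will force $G = \GL_{3}(\F_{\mfl})$, as required.

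\textbf{Key inputs.} Three ingredients are used throughout: (i) $G$ acts irreducibly on $\varphi[\mfl]$ (Theorem~\ref{varphi_mfl_irreducible_F_mfl_G_F_module}); (ii) the lower bound $|A/\mfl|^{2} = p^{2e\deg_{T}\mfl}$ divides $|G|$ (Proposition~\ref{cardinality_of_A_mod_mfl_divide_im_of_mod_mfl_reduction}), and this divisibility comes from a genuine $p$-subgroup of $G$, namely $\bar{\rho}_{\varphi,\mfl}(I_{T})$, which for $\mfl \neq (T)$ is the explicit elementary abelian transvection group described in Lemma~\ref{Im_of_inertia_l_noteq_T}; (iii) $\det G = \F_{\mfl}^{\times}$. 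In particular, $|G|_{p} \geq p^{2e\deg_{T}\mfl}$, and this is the single quantitative tool that drives the class elimination.

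\textbf{Class-by-class elimination.} Theorem~\ref{varphi_mfl_irreducible_F_mfl_G_F_module} disposes of class $\mathcal{C}_{1}$ (reducible), and classes $\mathcal{C}_{4}, \mathcal{C}_{7}$ are vacuous in dimension three. For each remaining class $\mathcal{C}_{2}, \mathcal{C}_{3}, \mathcal{C}_{5}, \mathcal{C}_{6}, \mathcal{C}_{8}$ and the almost-simple class $\mathcal{S}$, I would read off from [BHR13, Table 8.3] the structure of a maximal member $M$ and compare the $p$-part $|M|_{p}$ with $p^{2e\deg_{T}\mfl}$. Concretely: for $\mathcal{C}_{2}$ (normalizer of a split torus, of shape $(\F_{\mfl}^{\times})^{3}\rtimes S_{3}$), $|M|_{p}$ divides $|S_{3}|_{p} \leq p$; for $\mathcal{C}_{3}$ (semilinear over $\F_{\mfl^{3}}$), $|M| = 3(|\mfl|^{3}-1)$ is coprime to $p$; for $\mathcal{C}_{5}$ (subfield $\F_{\mfl_{0}} \subsetneq \F_{\mfl}$ with $[\F_{\mfl}:\F_{\mfl_{0}}] = r \geq 2$ prime), $|M|_{p} = |\mfl_{0}|^{3} \leq p^{(3/2)e\deg_{T}\mfl}$; for $\mathcal{C}_{6}$ (extraspecial normalizer, only relevant when $3 \mid |\mfl|-1$), $|M| \leq 3^{1+2}\cdot |\GL_{2}(3)| = 216$; for $\mathcal{C}_{8}$, the unitary option $\GU_{3}$ and orthogonal option $\GO_{3}$ give $p$-parts $|\mfl|^{3/2}$ and $|\mfl|$ respectively; and for $\mathcal{S}$, the finitely many candidates (essentially $\PSL_{2}(7)$, $3\cdot A_{6}$, a $\PSU_{3}$-entry, and the $\PSL_{3}(\F_{\mfl_{0}})$-entries) have $p$-part either absolutely bounded or bounded as in $\mathcal{C}_{5}$. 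In every case the $p$-part is strictly less than $p^{2e\deg_{T}\mfl}$ under the standing hypothesis $q \geq 7$, so $G \not\subseteq M$.

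\textbf{Main obstacle.} The tight cases are the subfield class $\mathcal{C}_{5}$ and the $\PSL_{3}(\F_{\mfl_{0}})$ entries of $\mathcal{S}$, where the $p$-part grows linearly with $\deg_{T}\mfl$ rather than being absolutely bounded; the strict inequality $3/r < 2$ for $r \geq 2$ works uniformly, but only because the exponent $2$ in the bound $|A/\mfl|^{2}$ (reflecting that inertia at $T$ has rank $2$, not rank $1$) is large enough to beat the subfield exponent $3/2$. The case $\mfl = (T)$ is handled the same way, with the Newton-polygon argument embedded in the proof of Proposition~\ref{cardinality_of_A_mod_mfl_divide_im_of_mod_mfl_reduction} replacing Lemma~\ref{Im_of_inertia_l_noteq_T} to supply the same $p$-part lower bound on $|G|$; the class elimination then runs uniformly in $\mfl \in \Omega_{A}$.
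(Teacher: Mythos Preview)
Your approach is essentially identical to the paper's: assume $H := \bar{\rho}_{\varphi,\mfl}(G_F)$ is proper, pass to a maximal overgroup $M \subsetneq \GL_3(\F_\mfl)$, and eliminate each Aschbacher class $\mathcal{C}_1$--$\mathcal{C}_8$, $\mathcal{S}$ using irreducibility (Theorem~\ref{varphi_mfl_irreducible_F_mfl_G_F_module}) together with the divisibility $|A/\mfl|^2 \mid |M|$ from Proposition~\ref{cardinality_of_A_mod_mfl_divide_im_of_mod_mfl_reduction}. The paper adds one preliminary step you omit---a short argument that $Z(\GL_3(\F_\mfl)) \subseteq M$, which it uses to reduce the $\mathcal{S}$-class analysis to $M \cap \SL_3(\F_\mfl)$ via \cite[Theorem 4.10.2]{BHR13}; your list of $\mathcal{S}$-candidates is slightly garbled (the $\PSU_3$ and subfield-$\PSL_3$ entries belong to $\mathcal{C}_8$ and $\mathcal{C}_5$ respectively, and the actual list is $\PSL_3(2)\times Z$, $3{\cdot}A_6$, $3{\cdot}A_6.2_3$, $3{\cdot}A_7$), but the same $p$-part bound disposes of all of them just as you indicate.
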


	Suppose $H:=\bar{\rho}_{\varphi,\mfl}(G_{F})\subseteq \GL_{3}(\F_{q^\prime})$,  where $\F_{q^\prime}  := A/\mfl$ with $q^\prime=q^{\deg_{T}{\mfl}}$. We want to show that $H= \GL_{3}(\F_{q^\prime})$.
	Suppose this is not true. Then, there exists a maximal subgroup $M$ of $\GL_{3}(\F_{q^\prime})$ containing  $H$. By Lemma~\ref{rank_r_rank_1_correspondence_with_Hayes_gives_det_of_mod_a_surejective}, we get $\det(M)=\F_{q^\prime}^{\times}$, which implies $[M:M\cap\SL_{3}(\F_{q^\prime})] = q^\prime-1$. In particular, we also get $M\cap\SL_{3}(\F_{q^\prime})$ is a proper subgroup of $\SL_{3}(\F_{q^\prime})$. If not, then $\SL_{3}(\F_{q^\prime}) \subseteq M$. Since $\det(M)=\F_{q^\prime}^{\times}$, we have $M=\GL_{3}(\F_{q^\prime})$. This is a contradiction since $M$ is proper. 
	
	We now show that $M$ contains the center $Z(\GL_{3}(\F_{q^\prime}))$. Suppose, if possible, $Z(\GL_{3}(\F_{q^\prime}))\nsubseteq M$. Now the maximality of $M$ would imply that $MZ(\GL_{3}(\F_{q^\prime}))=\GL_{3}(\F_{q^\prime})$. Since ${q^\prime}\nmid |Z(\GL_{3}(\F_{q^\prime}))|$ and $(q^\prime)^3\mid |\GL_{3}(\F_{q^\prime})|$, so we get $(q^\prime)^3\mid |M|$. Therefore, $M$ contains a Sylow $p$-subgroup of $\GL_{3}(\F_{q^\prime})$. Since the action of $H$ on $\F_{q^\prime}^3$ is irreducible, so is the action of $M$. Therefore, by \cite[Lemma 16]{Che22}, $M$ contains $\SL_{3}(\F_{q^\prime})$. By arguing as before, $M=\GL_{3}(\F_{q^\prime})$, which is a contradiction to the fact that $M$ is a proper subgroup.
	
	To summarise, we have the following information about $M$:
	\begin{enumerate}
		\item $M$ is a maximal subgroup of $\GL_{3}(\F_{q^\prime})$ that does not contains  $\SL_{3}(\F_{q^\prime})$,
		\item The action of $M$ on $\F_{q^\prime}^3$, via the mod-$\mfl$ representation $\bar{\rho}_{\varphi,\mfl}$, is irreducible,
		\item By Proposition~\ref{cardinality_of_A_mod_mfl_divide_im_of_mod_mfl_reduction}, $(q^\prime)^2\mid |M|$. Moreover, $(q^\prime)^2\mid |M\cap \SL_{3}(\F_{q^\prime})|$ (as $|M|=(q^\prime-1)|M\cap \SL_{3}(\F_{q^\prime})|$),
		\item $Z(\GL_{3}(\F_{q^\prime}))\subseteq M$.
	\end{enumerate}
	
	Recall that, we wish to show $H=\GL_{3}(\F_{q'})$. We assumed that $H$ is proper and hence
	$H$ is contained in a maximal subgroup $M$ of $\GL_{3}(\F_{q^\prime})$. By Aschbacher's Theorem~\cite[Theorem 2.2.19]{BHR13}, the maximal subgroups of $\GL_{3}(\F_{q^\prime})$ which do not contain $\SL_{3}(\F_{q'})$ are classified into $9$ classes: $8$ geometric classes $\mathcal{C}_{1},\ldots,\mathcal{C}_{8}$ and one special class $\mathcal{S}$. A brief description of these classes can also be found in~\cite[Appendix A.2]{Che22}.
	We will now show that $M$ cannot fall into these classes based on the information on $M$.
	
	$\bullet \ \mathcal{C}_{1}$: Suppose $M$ belongs to $\mathcal{C}_{1}$, then
	$M$ stabilizes a proper non-zero subspace of $\F_{q^\prime}^3$. This cannot happen since $M$ acts irreducibly on $\F_{q^\prime}^3$.
	
	$\bullet \ \mathcal{C}_{2}$: Suppose $M$ belongs to $\mathcal{C}_{2}$, then, there is a direct sum decomposition of $\F_{q^\prime}^3$ into three $1$-dimensional subspaces. Then, 
	the action of $M$ on $\F_{q^\prime}^3$ is of type $\GL_{1}(\F_{q^\prime})\wr S_{3}=\GL_{1}(\F_{q^\prime})^3\rtimes S_{3}$, the wreath product of $\GL_{1}(\F_{q^\prime})$ and the symmetric group $S_{3}$. So we have, $|M|$ divides $|\GL_{1}(\F_{q^\prime})^3\rtimes S_{3}| = (q^\prime-1)^3\cdot 3!$. This is a contradiction, since 
	$(q^\prime)^2\mid|M|$. Therefore, $M$ cannot 
	lie in $\mathcal{C}_{2}$.
	
	$\bullet \ \mathcal{C}_{3}$:  Suppose $M$ belongs to $\mathcal{C}_{3}$, then the action of $M$  on $\F_{q^\prime}^3$ is of the type $\GL_{1}(\F_{(q^\prime)^3})$. So $|M|$ divides $|\GL_{1}(\F_{(q^\prime)^3})| = \left((q^\prime)^3-1\right)$, which is a contradiction as $(q^\prime)^2\mid|M|$.
	
	$\bullet \ \mathcal{C}_{4}$:  Since there is no integer between $1$ and $\sqrt{3}$, we need not consider this case.
	
	$\bullet \ \mathcal{C}_{5}$:  Suppose $M$ belongs to $\mathcal{C}_{5}$. Then, there is a  proper  subfield $\F_{q_{0}}$ of $\F_{q^\prime}$ such that a conjugate of $M$ in $\GL_{3}(\F_{q^\prime})$ is a subgroup of $\langle Z(\GL_{3}(\F_{q^\prime})),\GL_{3}(\F_{q_{0}}) \rangle$. Note that, $q^\prime=q_{0}^d$ where $d\geq 2$.    Therefore, $|M|$ divides $|\langle Z(\GL_{3}(\F_{q^\prime})),\GL_{3}(\F_{q_{0}})\rangle|$. Now
	\begin{align*}
		|\langle Z(\GL_{3}(\F_{q^\prime})),\GL_{3}(\F_{q_{0}}) \rangle |&=\frac{|Z(\GL_{3}(\F_{q^\prime}))|\times|\GL_{3}(\F_{q_{0}})|}{|Z(\GL_{3}(\F_{q^\prime}))\cap\GL_{3}(\F_{q_{0}})|}\\
		&=\frac{(q^\prime-1)(q_{0}^3-1)(q_{0}^3-q_{0})(q_{0}^3-q_{0}^2)}{(q_{0}-1)}\\
		&=q_{0}^3(q^\prime-1)(q_{0}^3-1)(q_{0}^2-1)
	\end{align*}
	Since $(q^\prime)^{2}\mid |M|$, we get $(q^\prime)^{2}=q_{0}^{2d} \mid q_{0}^3(q^\prime-1)(q_{0}^3-1)(q_{0}^2-1)$,  which is a contradiction as $2d>3$. Therefore, $M$ cannot lie in $\mathcal{C}_{5}$.
	
	$\bullet \ \mathcal{C}_{6}$: Suppose $M$ belongs to $\mathcal{C}_{6}$. By~\cite[Page 114]{BHR13}, 
	we should have $q^{\prime}=p\equiv 1\pmod{3}$. Then there is an absolutely irreducible extraspecial $3$-group $E$ of order $3^{1+2}$ such that $E\trianglelefteq M\leq N_{\GL_{n}(q^{\prime})}(E)$, the normalizer of $E$ in $\GL_{n}(q^{\prime})$, and the action of $M$ on $\F_{q^\prime}^3$ is of type $3^{1+2}. \Sp_{2}(3)$ (cf. \cite[\S 1.2]{BHR13} for these notations). So we have $|M|$ divides $|3^{1+2}. \Sp_{2}(3)|= 2^3\cdot 3^4$, which is a contradiction, as $(q^\prime)^2\mid|M|$ and $q^\prime \geq 7$.
	
	
	$\bullet \ \mathcal{C}_{7}$:  Since there are no integers $t\geq 2$ and $m\geq 1$ such that $3=m^t$, we do not need to consider this case.
	
	$\bullet \ \mathcal{C}_{8}$: Suppose $M$ belongs to $\mathcal{C}_{8}$, then $M$ preserves a non-degenerate classical form on $\F_{q^\prime}^3$ up to scalar multiplication. By classical form, we mean symplectic form, uniform form or quadratic form:
	\begin{itemize}
		\item[(i)] Symplectic form: These forms exist only on even dimensional vector spaces. In our case, the dimension is $3$, so they do not exist.
		
		\item[(ii)]  Uniform form: For having a unitary form on a vector space over a finite field $\F_{q^\prime}$, ${q^\prime}$ needs to be a square. So assume $q^\prime=(q^{\prime\prime})^2$. Then, the action of $M$ on $\F_{q'}^3$ is of type $\GU_{3}(q^{\prime\prime})$. Therefore, $|M|$ divides $|\GU_{3}(q^{\prime\prime})|$ and by~\cite[Theorem 1.6.22]{BHR13} $|\GU_{3}(q^{\prime\prime})|=(q^{\prime\prime})^3\left(q^{\prime\prime}+1\right)\left((q^{\prime\prime})^2-1\right)\left((q^{\prime\prime})^3+1\right)$, which is a contradiction, as $(q^\prime)^2=(q^{\prime\prime})^4$ divides $|M|$.
		\item[(iii)]  Quadratic form: The action of $M$ on $\F_{q'}^3$ is of type $\GO_{3}(q^\prime)$. Therefore, $|M|$ divides $|\GO_{3}(q^\prime)|$ and by~\cite[Theorem 1.6.22]{BHR13} $|\GO_{3}(q^\prime)|=2q^\prime((q^{\prime})^2-1)$, which is a contradiction, as $(q^\prime)^2$ divides $|M|$ and $q^\prime \geq 7$.
	\end{itemize}
	Therefore, $M$ cannot lie in $\mathcal{C}_{8}$.
	
	$\bullet \ \mathcal{S}$: For this special class, we need to look at the proper subgroup $M \cap \SL_3(\F_{q^\prime})$ containing 	$Z(\SL_{3}(\F_{q^{\prime}}))$	in $\SL_3(\F_{q^\prime})$. Note that, we have the property $(q^{\prime})^2\mid |M \cap \SL_3(\F_{q^\prime})|$. Therefore, by~\cite[Theorem 4.10.2]{BHR13}, the group $M \cap \SL_3(\F_{q^\prime})$ can be any of these subgroups: $\PSL_{3}(2)\times Z(\SL_{3}(\F_{q^\prime}))$, $3^{\cdot}\textrm{A}_{6}$, $3^{\cdot}\textrm{A}_6.2_3 $, $3^{\cdot}\textrm{A}_{7}$ (cf.~\cite[\S 1.2]{BHR13} for these notations). Observe that cardinality of $\PSL_{3}(\F_2)\times Z(\SL_{3}(\F_{q^\prime}))$ is either $2^3\cdot3\cdot7$ or $2^3\cdot3^2\cdot7$ depending on $|Z(\SL_{3}(\F_{q^\prime}))|=1$ or $3$. Now the cardinality of the remaining groups is $2^3\cdot3^3\cdot5$, $2^4\cdot3^3\cdot5$, $2^3\cdot3^3\cdot5\cdot7$, respectively. Since $q^{\prime} \geq 7$ is an odd prime power and $(q^{\prime})^2\mid |M \cap \SL_3(\F_{q^\prime})|$, we get that
	$M \cap \SL_3(\F_{q^\prime})$ cannot be any of them. Hence, $M$ cannot lie in $\mathcal{S}$. 
	
	\section{On the surjectivity of $\mfl$-adic representations $\rho_{\varphi, \mfl}$}
	In this section, we prove the main theorem of this article, which is the surjectivity of $\mfl$-adic Galois representations
	attached Drinfeld $A$-modules, for $\mfl\in \Omega_{A}$.
	
	\begin{thm}[=Theorem~\ref{l_adic_surjectivity}]
		\label{Duplicate_l_adic_surjectivity}
		Let $q\geq 7$ be an odd prime power. Let $\varphi$ be a Drinfeld $A$-module  of rank $3$ defined by $\varphi_{T}=T+g_{1}^{q-1}\tau+g_{2}^{q-1}\tau^2+T^{q-1}\tau^3$ with $(g_{1},g_{2})\in \mathcal{G}$. The $\mfl$-adic Galois representation
		$$\rho_{\varphi, \mfl}:G_{F} \longrightarrow \varprojlim_{i}\ \Aut(\varphi[\mfl^i]) \cong \GL_{3}(A_{\mfl})$$
		is surjective for all $\mfl \in \Omega_{A}$.
	\end{thm}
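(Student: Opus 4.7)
The plan is to deduce the $\mfl$-adic surjectivity directly from the mod-$\mfl$ surjectivity established in Theorem~\ref{mod_mfl_represetation_surjective}, by invoking the lifting criterion [PR09, Proposition 4.1]. That proposition encodes the standard Frattini-type argument in the Drinfeld setting: for a closed subgroup $H \subseteq \GL_r(A_\mfl)$ with $r \geq 2$, if the image of $H$ in $\PGL_r(A/\mfl)$ contains $\PSL_r(A/\mfl)$, then $H \cap \SL_r(A_\mfl) = \SL_r(A_\mfl)$. The mechanism behind this is that the first congruence kernel of the reduction $\SL_r(A_\mfl) \twoheadrightarrow \SL_r(A/\mfl)$ is identified with the Lie algebra $\mathfrak{sl}_r(A/\mfl)$ as an $\SL_r(A/\mfl)$-module under conjugation, and this module is irreducible, so a Nakayama-style argument forces the image to fill each successive congruence level.

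Concretely, I would set $H_\mfl := \rho_{\varphi, \mfl}(G_F)$, a closed subgroup of $\GL_3(A_\mfl)$. By Theorem~\ref{mod_mfl_represetation_surjective}, its reduction modulo $\mfl$ is all of $\GL_3(A/\mfl)$, so in particular its image in $\PGL_3(A/\mfl)$ contains $\PSL_3(A/\mfl)$. Applying [PR09, Proposition 4.1] then yields $\SL_3(A_\mfl) \subseteq H_\mfl$. To upgrade this to $H_\mfl = \GL_3(A_\mfl)$, I would combine it with the surjectivity of the determinant character $\det \circ \rho_{\varphi, \mfl}: G_F \to A_\mfl^\times$; this follows by passing Lemma~\ref{rank_r_rank_1_correspondence_with_Hayes_gives_det_of_mod_a_surejective} to the $\mfl$-adic inverse limit, using the surjectivity of the adelic Carlitz representation from Proposition~\ref{Hayes}.

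The main potential obstacle is verifying that the hypotheses of [PR09, Proposition 4.1] hold uniformly across all $\mfl \in \Omega_A$, especially at small primes. Since $r = 3$ and $|A/\mfl| \geq q \geq 7$, the groups $\PSL_3(A/\mfl)$ are simple and $\SL_3(A/\mfl)$ is perfect, so the usual small-field pathologies that can obstruct such lifting arguments do not arise. Moreover, the prime $\mfl = (T)$, where $\varphi$ has stable but not good reduction, requires no separate treatment here, since Theorem~\ref{mod_mfl_represetation_surjective} already handles it. Hence the criterion applies uniformly and delivers the claim. The only delicate point I would need to double-check is the precise form of the determinant-lifting: one must ensure the pro-$\mfl$ component of the adelic Carlitz representation surjects onto $A_\mfl^\times$ (not merely onto $(A/\mfl)^\times$), which is immediate from Proposition~\ref{Hayes} by the universal property of inverse limits.
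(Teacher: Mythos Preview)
Your argument has a genuine gap: you have misstated \cite[Proposition~4.1]{PR09}. As quoted in the paper (Proposition~\ref{Pink_R_cond_l_adic_sur}), that result requires, in addition to $\det(H)=A_\mfl^\times$ and $H\bmod\mfl=\GL_3(\F_\mfl)$, that \emph{$H\bmod\mfl^2$ contain a non-scalar element congruent to the identity modulo $\mfl$}. This extra hypothesis is not a technicality one can absorb into a Frattini argument: in the function-field setting the residue field embeds in $A_\mfl\cong\F_\mfl[[t]]$, so the reduction $\GL_3(A_\mfl)\twoheadrightarrow\GL_3(\F_\mfl)$ \emph{splits}. Concretely, when $p\nmid 3$ the closed subgroup $H=A_\mfl^\times\cdot\GL_3(\F_\mfl)\subsetneq\GL_3(A_\mfl)$ (scalars times the Teichm\"uller copy of $\GL_3(\F_\mfl)$) has full determinant and full mod-$\mfl$ image, yet every element of $H$ that is trivial mod $\mfl$ is scalar. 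Thus the version of the lifting criterion you invoke is simply false over $A_\mfl$, and your proof as written does not go through.

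The paper's proof therefore has an essential step you are missing: verifying the non-scalar congruence condition. For $\mfl\neq(T)$ this comes from the explicit description of $\bar\rho_{\varphi,\mfl^2}(I_T)$ obtained via Tate uniformization (Remark~\ref{mfa_prime_to_T_image_of_inertia}), which produces the required non-scalar unipotents. For $\mfl=(T)$, where $\varphi$ has only stable reduction, the paper carries out a Newton-polygon analysis of $\varphi_{T^2}(x)/x$ to show $q^4\mid|\bar\rho_{\varphi,\mfl^2}(G_{F_{(T)}})|$, and then a valuation computation to rule out that the resulting kernel element is scalar. Your remark that ``the prime $\mfl=(T)$ \ldots\ requires no separate treatment'' is therefore exactly backwards: it is the case demanding the most additional work.
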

	The proof of Theorem~\ref{l_adic_surjectivity} is based on a Proposition by Pink and R\"utsche, which we recall now. Let $\mfl$ be a finite place of $F$, i.e., $\mfl\in \Omega_A$.
	\begin{prop}[\cite{PR09}, Proposition 4.1]\label{Pink_R_cond_l_adic_sur}
		Let $H$ be a closed subgroup of $\GL_{3}(A_{\mfl})$ such that
		$\det(H)=A_{\mfl}^{\times}$. Assume that
		$|\F_{\mfl}|\geq 4$. Suppose $H\equiv\GL_{3}(\F_{l})\pmod \mfl$, and $H$ mod $\mfl^2$ contains a non-scalar element which is congruent to the identity modulo $\mfl$. Then, $H=\GL_{3}(A_{\mfl})$.
	\end{prop}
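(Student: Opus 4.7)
The plan is to induct on $n \geq 1$ that the quotient
$V_n := (H \cap K_n) K_{n+1}/K_{n+1} \subseteq K_n/K_{n+1} \cong \M_3(\F_\mfl)$
equals all of $\M_3(\F_\mfl)$, where $K_i := 1 + \mfl^i \M_3(A_\mfl) = \ker\bigl(\GL_3(A_\mfl) \twoheadrightarrow \GL_3(A_\mfl/\mfl^i)\bigr)$ is the standard congruence filtration and $K_i/K_{i+1} \cong \M_3(\F_\mfl)$ canonically (after choosing a uniformizer of $\mfl$). Once all $V_n = \M_3(\F_\mfl)$ are established, a straightforward successive-lifting argument shows $H \twoheadrightarrow \GL_3(A_\mfl/\mfl^n)$ for every $n \geq 1$, and since $H$ is closed in the profinite group $\GL_3(A_\mfl) = \varprojlim_n \GL_3(A_\mfl/\mfl^n)$, this forces $H = \GL_3(A_\mfl)$. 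Each $V_n$ is automatically $\GL_3(\F_\mfl)$-stable under the adjoint action $X \mapsto g X g^{-1}$, because $\GL_3(\F_\mfl) = H/(H \cap K_1)$ acts on $V_n$ by conjugation inherited from $H$.

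For the base case $V_1 = \M_3(\F_\mfl)$, the hypothesis on a non-scalar element of $H \bmod \mfl^2$ congruent to the identity mod $\mfl$ places a non-scalar matrix inside $V_1$. Combined with $\GL_3(\F_\mfl)$-invariance and the classification of adjoint-stable subspaces of $\M_3(\F_\mfl)$ (which, for $|\F_\mfl| \geq 4$, reduces to $0$, $\F_\mfl \cdot I$, $\mathfrak{sl}_3(\F_\mfl)$, $\M_3(\F_\mfl)$), this forces $V_1 \supseteq \mathfrak{sl}_3(\F_\mfl)$. To capture the remaining scalar direction, I would use the determinant hypothesis: a snake-lemma argument applied to the short exact sequence $1 \to H \cap K_1 \to H \to \GL_3(\F_\mfl) \to 1$ under $\det$ (targeting $1 \to 1 + \mfl A_\mfl \to A_\mfl^\times \to \F_\mfl^\times \to 1$) yields $\det(H \cap K_1) = 1 + \mfl A_\mfl$. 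Since the induced map $K_1/K_2 \to (1+\mfl A_\mfl)/(1+\mfl^2 A_\mfl) \cong \F_\mfl$ is precisely the trace $\mathrm{tr}\colon \M_3(\F_\mfl) \to \F_\mfl$, the subspace $V_1$ meets $\M_3(\F_\mfl) \setminus \mathfrak{sl}_3(\F_\mfl)$, and hence $V_1 = \M_3(\F_\mfl)$.

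The inductive step from $V_{n-1} = \M_3(\F_\mfl)$ to $V_n = \M_3(\F_\mfl)$ (for $n \geq 2$) uses the commutator pairing on the filtration: a direct expansion shows that if $h_1 \in K_1$ and $h_{n-1} \in K_{n-1}$ with $h_1 \equiv 1 + \mfl X$ and $h_{n-1} \equiv 1 + \mfl^{n-1} Y$, then $[h_1, h_{n-1}] \in K_n$ satisfies $[h_1, h_{n-1}] \equiv 1 + \mfl^n [X, Y] \pmod{K_{n+1}}$. Ranging $h_1 \in H \cap K_1$ and $h_{n-1} \in H \cap K_{n-1}$ over lifts of arbitrary elements of $V_1 = V_{n-1} = \M_3(\F_\mfl)$ yields $V_n \supseteq [\M_3(\F_\mfl), \M_3(\F_\mfl)] = \mathfrak{sl}_3(\F_\mfl)$. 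The same snake-lemma/trace argument, now applied to the sequence $1 \to H \cap K_n \to H \to \GL_3(A_\mfl/\mfl^n) \to 1$ (using that $H$ already surjects mod $\mfl^n$ by the outer induction), supplies an element of nonzero trace in $V_n$, completing the induction.

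The principal obstacle is the representation-theoretic step classifying the $\GL_3(\F_\mfl)$-stable subspaces of $\M_3(\F_\mfl)$ under the adjoint action and checking the list is exhausted by $0, \F_\mfl \cdot I, \mathfrak{sl}_3(\F_\mfl), \M_3(\F_\mfl)$. Particular care is needed in characteristic $3$, in which case $\F_\mfl \cdot I \subseteq \mathfrak{sl}_3(\F_\mfl)$ and the composition series is non-semisimple; the hypothesis $|\F_\mfl| \geq 4$ is exactly what guarantees that the relevant simple composition factor $\mathfrak{sl}_3(\F_\mfl)/\F_\mfl \cdot I$ remains irreducible as an $\F_\mfl[\GL_3(\F_\mfl)]$-module. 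A secondary subtlety is to verify that each $V_n$ is a genuine $\F_\mfl$-subspace (and not merely an $\F_p$-subgroup), which follows from the natural $A_\mfl/\mfl$-module structure on $K_i/K_{i+1}$ induced by the multiplicative group law on $\GL_3(A_\mfl)$.
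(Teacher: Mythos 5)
The paper does not prove this statement; it imports it verbatim as \cite[Proposition~4.1]{PR09} and uses it as a black box, so there is no in-paper proof to compare your argument against.

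That said, your congruence-filtration (Frattini-type) argument is the standard way such ``mod-$\mfl$ plus one non-scalar element mod $\mfl^2$ implies $\mfl$-adic'' lifting results are proved, and it is essentially the method of Pink--R\"utsche themselves (and, before them, Serre and Ribet for $\GL_2$). The outline is sound: the identification $K_n/K_{n+1}\cong\M_3(\F_\mfl)$ as a $\GL_3(\F_\mfl)$-module, the commutator pairing $K_1/K_2\times K_{n-1}/K_n\to K_n/K_{n+1}$ realizing the Lie bracket, the fact that $[\M_3,\M_3]=\mathfrak{sl}_3$, and the use of $\det(H)=A_\mfl^\times$ to recover the trace direction are all correct and are exactly the right ingredients. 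Two points deserve a slightly more careful treatment than your sketch gives. First, the ``snake-lemma'' step producing $\det(H\cap K_n)=1+\mfl^n A_\mfl$ is a diagram chase in non-abelian groups; what actually makes it work is that $\SL_3(A_\mfl/\mfl^n)$ is perfect (lift $u\in1+\mfl^nA_\mfl$ to $h\in H$ with $\det h=u$, note $\bar h\in\SL_3(A_\mfl/\mfl^n)$, write $\bar h$ as a product of commutators of elements of the image of $H$, lift those to $H$, and correct $h$ by the resulting determinant-one element). You should say this explicitly rather than invoke the snake lemma. Second, your attribution of the hypothesis $|\F_\mfl|\geq 4$ to the irreducibility of $\mathfrak{sl}_3/\F_\mfl I$ in characteristic $3$ is not quite right: that composition factor is already irreducible for all $q$, and for $r=3$ the groups $\SL_3(\F_2)$ and $\SL_3(\F_3)$ are perfect. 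The hypothesis in \cite{PR09} is stated for general rank $r$ and is really there to handle $r=2$, where $\SL_2(\F_2)$ and $\SL_2(\F_3)$ fail to be perfect. The genuine technical point you must verify (and you correctly flag it) is that the $\GL_3(\F_\mfl)$-submodule lattice of $\M_3(\F_\mfl)$ is $0\subset\F_\mfl I\subset\mathfrak{sl}_3\subset\M_3$ (a chain in characteristic $3$, a direct sum with irreducible $\mathfrak{sl}_3$ otherwise), so that any invariant subspace containing a non-scalar element already contains $\mathfrak{sl}_3$; filling this in would make the proof complete.
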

	Take $H:=\Ima(\rho_{\varphi,\mfl})$. We will show that $H$ satisfies the Proposition~\ref{Pink_R_cond_l_adic_sur}.
	Arguing as in the proof of~\cite[Proposition 4.3]{KS25}, we get
	the following proposition, which implies $\det(H)=A_{\mfl}^{\times}$.

	\begin{prop}\label{det_sur_of_l_adic_rep}
		If $\varphi$ is a Drinfeld $A$-module as in Theorem~\ref{Duplicate_l_adic_surjectivity}, then the determinant map
		$\det\rho_{\varphi,\mfl}: G_{F}\ra A_{\mfl}^{\times}$
		is surjective for all $\mfl \in \Omega_{A}$.
	\end{prop}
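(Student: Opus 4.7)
The plan is to reduce the $\mfl$-adic determinant surjectivity to Hayes' classical result on the Carlitz module (Proposition~\ref{Hayes}) by promoting the mod-$\mfa$ statement of Lemma~\ref{rank_r_rank_1_correspondence_with_Hayes_gives_det_of_mod_a_surejective} to the inverse limit. Concretely, I would first recall that by~\cite[Theorem 3.7.1(1)]{Pap23}, for each $i \geq 1$ the determinant of the mod-$\mfl^{i}$ Galois representation attached to $\varphi$ coincides with the Galois representation attached to the rank-$1$ Drinfeld module $\psi_{T} = T + T^{q-1}\tau$; that is, $\det \bar{\rho}_{\varphi, \mfl^{i}} = \bar{\rho}_{\psi, \mfl^{i}}$. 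Passing to the inverse limit over $i$ then yields the identity of continuous characters
\[
\det \rho_{\varphi, \mfl} \;=\; \rho_{\psi, \mfl} \colon G_{F} \lra A_{\mfl}^{\times}.
\]

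Next, I would observe that $\psi$ and the Carlitz module $C$ (with $C_{T} = T + \tau$) are isomorphic over $F$, since the identity $T \psi_{T} = C_{T} T$ exhibits $T \in F^{\times} = \GL_{1}(F)$ as an isomorphism $\psi \xrightarrow{\sim} C$. Hence $\rho_{\psi, \mfl}$ and $\rho_{C, \mfl}$ have the same image, and it suffices to prove that $\rho_{C, \mfl} \colon G_{F} \to A_{\mfl}^{\times}$ is surjective.

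For this, I would apply Hayes' theorem (Proposition~\ref{Hayes}) at each level $\mfl^{i}$ to conclude that $\bar{\rho}_{C, \mfl^{i}}(G_{F}) = (A/\mfl^{i})^{\times}$ for all $i \geq 1$. Since $G_{F}$ is profinite and the transition maps $(A/\mfl^{i+1})^{\times} \twoheadrightarrow (A/\mfl^{i})^{\times}$ are surjective, the image $\rho_{C, \mfl}(G_{F})$ is a closed subgroup of $A_{\mfl}^{\times} = \varprojlim_{i} (A/\mfl^{i})^{\times}$ whose reduction modulo $\mfl^{i}$ is all of $(A/\mfl^{i})^{\times}$ for every $i$. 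A standard compactness argument (equivalently, the fact that a closed subgroup of a profinite group is determined by its finite quotients) then forces $\rho_{C, \mfl}(G_{F}) = A_{\mfl}^{\times}$. Combining the three steps gives the desired surjectivity of $\det \rho_{\varphi, \mfl}$.

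There is no real obstacle: the only mildly delicate point is the inverse-limit step at the end, which is a routine consequence of profiniteness, and every other input is already recorded in the preliminaries (the determinant formula from Papikian, the explicit isomorphism $\psi \cong C$, and Hayes' theorem). In particular, the argument is uniform in $\mfl \in \Omega_{A}$ and requires no case analysis on the shape of $(g_{1}, g_{2}) \in \mathcal{G}$.
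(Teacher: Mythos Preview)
Your proposal is correct and follows essentially the same route as the paper. The paper itself does not spell out a proof here but defers to~\cite[Proposition 4.3]{KS25}; the argument you sketch is precisely the $\mfl$-adic promotion of Lemma~\ref{rank_r_rank_1_correspondence_with_Hayes_gives_det_of_mod_a_surejective}, and the same reasoning appears verbatim in the paper's proof of Theorem~\ref{adelic_surjectivity} for the full adelic determinant.
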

	
	The condition $H\equiv\GL_{3}(\F_{l})\pmod \mfl$ is equivalent to the surjectivity of mod-$\mfl$ representation $\bar{\rho}_{\varphi,\mfl}$, which is exactly the content of Theorem~\ref{mod_mfl_represetation_surjective}. We now show that $H$ mod $\mfl^2$ contains a non-scalar element congruent to the identity modulo $\mfl$. This claim will be proved in two cases.
	
	If $\mfl\neq (T)$, then by Remark~\ref{mfa_prime_to_T_image_of_inertia} $\bar{\rho}_{\varphi,\mfl^2}$ contains a non-trivial element which is congruent to the identity modulo $\mfl$, so we are done. Now, let us assume $\mfl= (T)$: Since $\varphi_{T}=T+g_{1}^{q-1}\tau+g_{2}^{q-1}\tau^2+T^{q-1}\tau^3$, we get
	\begin{align*}
		\varphi_{T^2} =& T^2 + (Tg_{1}^{q-1} + T^{q}g_{1}^{q-1})\tau + (Tg_{2}^{q-1} + g_{1}^{q^2-1}+T^{q^2}g_{2}^{q-1})\tau^2+ (T^{q}+g_{1}^{q-1}g_{2}^{q^2-q} \\
		& + g_{1}^{q^3-q^2}g_{2}^{q-1}+T^{q^3+q-1})\tau^3+(T^{q^2-q}g_{1}^{q-1}+g_{2}^{q^3-q^2+q-1}+T^{q-1}g_{1}^{q^4-q^3})\tau^4\\
		& + (T^{q^3-q^2}g_{2}^{q-1}+T^{q-1}g_{2}^{q^4-q^3})\tau^5+ T^{q^4-q^3+q-1}\tau^6
	\end{align*}
	Since $T\nmid g_{2}$, the valuation of $T^{q-1}g_{2}^{q-1}(T^{(q^2-1)(q-1)}+g_{2}^{(q^3-1)(q-1)})$, the coefficient of $\tau^5$, is $q-1$. Similarly, the valuation of the coefficient of $\tau^4$ is $0$. 
	Recall that, for any $a \in A$, $\nu_{T}^{a}$ denotes the unique valuation on $F_{(T)} (\varphi[a])$
	extending $\nu_{T}$. By the Newton's polygon of $\varphi_{T^2}(x)/x$, there are $q^5-q^4$ many roots $\alpha\in F_{(T)}(\varphi[\mfl^2])$ of $\varphi_{T^2}(x)/x$ such that $\nu_{T}^{\mfl^2}(\alpha)=-\frac{1}{q^4}$. Now, arguing as in the proof of
	Proposition~\ref{cardinality_of_A_mod_mfl_divide_im_of_mod_mfl_reduction} in the case $\mfl=(T)$, 
	we get $|\bar{\rho}_{\varphi,\mfl^2}(G_{F_{(T)}})|$ is divisible by $q^4$. The surjective group homomorphism
	$$ \bar{\rho}_{\varphi, \mfl^2}(G_F) \xrightarrow{\text{mod } \mfl} \bar{\rho}_{\varphi, \mfl}(G_F) \cong \GL_{3}(\F_{\mfl}).$$
	has non-trivial kernel $S$ in $\bar{\rho}_{\varphi, \mfl^2}(G_F)$, because $q^4$ divides $|\bar{\rho}_{\varphi,\mfl^2}(G_{F})|$, but not $|\GL_{3}(\F_{\mfl})|$. Moreover, $S$ and $\bar{\rho}_{\varphi,\mfl^2}(G_{F_{(T)}})$ have non-trivial intersection. We now show that all non-trivial elements
	in $S \cap \bar{\rho}_{\varphi,\mfl^2}(G_{F_{(T)}})\subseteq  \GL_3(A/{\mfl^2})$ cannot belong to $Z(\GL_3(A/{\mfl^2}))$. Suppose, if possible, such an element $\bar{\rho}_{\varphi,\mfl^2}(\sigma)$ is a non-trivial scalar matrix $(1+aT)I_3$
	for some $a\in \F_{q}^{\times}$, i.e., $\sigma$ acts on $\phi[\mfl^2]$ by ${1+aT}$. Let $\alpha$ be a root of $\varphi_{T^2}(x)/x$ with $\nu_{T}^{\mfl^2}(\alpha)= -\frac{1}{q^4}$. Now look at
	\begin{align*}
		\nu_{T}^{\mfl^2}(\sigma(\alpha))& =\nu_{T}^{\mfl^2}\left(\varphi_{1+aT}(\alpha)\right)\\
		& =\nu_{T}^{\mfl^2}\left([1+aT]\alpha+[ag_{1}^{q-1}]\alpha^q+[ag_{2}^{q-1}]\alpha^{q^2}+[aT^{q-1}]\alpha^{q^3}\right)\\
		& \geq \min\left\{-\frac{1}{q^4}, (q-1)\nu_{T}(g_{1})-\frac{1}{q^3},(q-1)\nu_{T}(g_{2})-\frac{1}{q^2},(q-1)-\frac{1}{q}\right\}.
	\end{align*}
	Since $(g_1,g_2) \in \mathcal{G}$, we get $\nu_{T}^{\mfl^2}(\sigma(\alpha))=-\frac{1}{q^2}$, which is a contradiction since 
	$\sigma  \in G_{F_{(T)}}$. Therefore, the matrix $\bar{\rho}_{\varphi,\mfl^2}(\sigma)$ cannot be a scalar matrix. Thus, $H$ satisfies the last condition in Proposition~\ref{Pink_R_cond_l_adic_sur}. This proves the surjectivity of the representation $\rho_{\varphi, \mfl}$.

	\section{On the surjectivity of adelic representation $\rho_{\varphi}$}
	As an application of the work done so far, in this section, we prove the surjectivity of the adelic Galois representation associated to $\varphi$. More precisely, we have:
	\begin{thm}
		\label{adelic_surjectivity}
		Let $q\geq 7$ be an odd prime power. Let $\varphi$ be a Drinfeld $A$-module of rank $3$ defined by $\varphi_{T}=T+g_{1}^{q-1}\tau+g_{2}^{q-1}\tau^2+T^{q-1}\tau^3$ with $(g_{1},g_{2})\in \mathcal{G}$. The adelic Galois representation
          $$\rho_{\varphi}:G_{F}\longrightarrow \GL_{3}(\widehat{A})$$
		is surjective.
	\end{thm}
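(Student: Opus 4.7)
The plan is to lift the $\mfl$-adic surjectivity of Theorem~\ref{l_adic_surjectivity} to the adelic level, following the strategy of~\cite[\S 8]{Che22} with modifications tailored to our two-parameter family. Since $\rho_\varphi(G_F)$ is closed in the profinite topology of $\GL_3(\widehat A)$, and the Chinese Remainder Theorem identifies $\GL_3(\widehat A)$ with $\prod_{\mfl \in \Omega_A} \GL_3(A_\mfl)$, surjectivity is equivalent to showing that for every finite subset $S \subseteq \Omega_A$, the image of $G_F$ in $\prod_{\mfl \in S} \GL_3(A_\mfl)$ is the full product. Theorem~\ref{l_adic_surjectivity} already supplies surjection onto each single factor, so what remains is to glue these across primes.

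I would proceed by induction on $|S|$ using Goursat's lemma. Assuming the image projects onto $G_1 = \prod_{\mfl \in S'} \GL_3(A_\mfl)$ and onto $G_2 = \GL_3(A_{\mfl_0})$ for $\mfl_0 \notin S'$, Goursat identifies the image in $G_1 \times G_2$ with a fibre product over some common quotient $Q \cong G_1/N_1 \cong G_2/N_2$, and this coincides with the full product precisely when $Q$ is trivial. I would rule out non-trivial $Q$ by splitting into abelian and non-abelian contributions. For the abelian part, the maximal abelian quotient of $\GL_3(A_\mfl)$ is $A_\mfl^\times$ via the determinant; by Lemma~\ref{rank_r_rank_1_correspondence_with_Hayes_gives_det_of_mod_a_surejective} combined with Proposition~\ref{Hayes}, $\det \circ \rho_\varphi$ factors through the Carlitz adelic representation and is therefore surjective onto $\widehat A^\times$, so the image in any finite product already has full determinant, precluding abelian common quotients. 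For the non-abelian part, every non-abelian simple composition factor of $\GL_3(A_\mfl)$ is isomorphic to $\PSL_3(\F_\mfl)$, and when $|\F_\mfl| \neq |\F_{\mfl'}|$ these simple groups have different orders and are hence non-isomorphic.

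The main obstacle is the remaining case of distinct primes with equal residue fields, for instance $\mfl = (T-c_1)$ and $\mfl' = (T-c_2)$ with $c_1 \neq c_2$ in $\F_q^\times$, where $\PSL_3(\F_\mfl) \cong \PSL_3(\F_{\mfl'})$ as abstract groups. Here I would invoke a Chebotarev-style argument: as computed in Section~3, the characteristic polynomial $P_{\varphi,\mfp}(x) = -\mfp + g_1(c)^{q-1} x + x^2 + x^3$ at $\mfp = (T-c)$ has coefficients in $A$ that do not depend on the auxiliary prime, so its reductions modulo $\mfl$ and modulo $\mfl'$ are simply evaluations of the same $A$-polynomial at two distinct points of $\F_q$. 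As $\mfp$ varies over degree one primes, these evaluated characteristic polynomials take generically distinct values, so no abstract isomorphism $\PSL_3(\F_\mfl) \cong \PSL_3(\F_{\mfl'})$ can intertwine $\bar\rho_{\varphi,\mfl}$ with $\bar\rho_{\varphi,\mfl'}$ on a density one set of Frobenii. This eliminates the Goursat obstruction; combining the steps yields the adelic surjectivity as claimed.
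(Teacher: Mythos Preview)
Your overall architecture---reduce to finite products, handle the determinant via Hayes/Carlitz, and use a Goursat-type argument with Frobenius characteristic polynomials to separate primes---matches the paper's strategy. The paper organises it slightly differently (pass to the commutator and work inside $\SL_3$, then invoke Ribet's Lemma~5.2.2 after establishing pairwise surjectivity via Proposition~\ref{moda-surjectivity}), but the substance is the same.

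There is, however, a genuine gap in your treatment of the equal-residue-field case. You write that ``no abstract isomorphism $\PSL_3(\F_\mfl)\cong\PSL_3(\F_{\mfl'})$ can intertwine $\bar\rho_{\varphi,\mfl}$ with $\bar\rho_{\varphi,\mfl'}$'' because the reduced characteristic polynomials differ. But the characteristic polynomial is not well-defined on $\PSL_3$ (only scaling-invariant combinations like $\tr^3/\det$ are), and an abstract isomorphism of $\PSL_3(\F_{q'})$ need not preserve any such invariant: the outer automorphism group contains the graph automorphism $g\mapsto (g^{T})^{-1}$, which sends the invariants of $g$ to those of $g^{-1}$. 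Your argument, as written, does not exclude a Goursat graph arising from such a contragredient twist. This is exactly the content the paper supplies in Lemma~\ref{tilde_alpha_must_be_of_first_type}: one lifts the isomorphism to $\GL_3$ via Dieudonn\'e's classification, explicitly rules out the second (contragredient) type by comparing $P_{\varphi,\mfp}$ with the characteristic polynomial of $\bar\rho_{\varphi,\mfl_1}(\Frob_\mfp)^{-1}$, and only then uses the first-type invariance of $\tr^3/\det$ together with the ring-generation property~(3) of Lemma~\ref{H_GL_3_A_mod_a} to force a contradiction. Your sketch needs at minimum these two ingredients---an explicit classification of the possible intertwiners and a $\PGL_3$-invariant replacing the raw characteristic polynomial---before the Chebotarev-style conclusion is justified.
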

	To prove Theorem~\ref{adelic_surjectivity}, first we need to show that $\bar{\rho}_{\varphi,\mfa}$ is surjective when $\mfa$ is the product of two distinct ideals in $\Omega_A$.
	This is achieved in a sequence of steps.
	

	\begin{lem}\label{H_GL_3_A_mod_a}
		Let $\mfl_{1}$ and $\mfl_{2}$ be two distinct elements in $\Omega_{A}$, and set $\mfa=\mfl_{1}\mfl_{2}$. Let $H=\bar{\rho}_{\varphi,\mfa}(G_{F})\subseteq \GL_{3}(A/\mfa)$. Then, the subgroup $H$ has the following properties:
		\begin{enumerate}
			\item $\det(H)=(A/\mfa)^{\times}$,
			\item For $i=1,2$, the projections $p^\prime_{i}:H^\prime\ra \SL_{3}(\F_{\mfl_{i}})$ are surjective, where $H^\prime=H\cap \SL_{3}(A/\mfa)$,
			\item the subring of $A/\mfa$ generated by the set
			$$\mathcal{S}=\left\{\tr(h)^3/\det(h)|h\in H\right\}\cup\left\{\det(h)/\tr(h)^3|h\in H\ \text{with}\ \tr(h)\in (A/\mfa)^{\times}\right\}$$
			is exactly $A/\mfa$.
		\end{enumerate}
	\end{lem}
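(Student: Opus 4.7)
The plan is to address the three properties in sequence, using Lemma~\ref{rank_r_rank_1_correspondence_with_Hayes_gives_det_of_mod_a_surejective} for (1), Theorem~\ref{mod_mfl_represetation_surjective} together with perfectness of $\SL_3(\F_{\mfl_i})$ for (2), and the explicit characteristic polynomial of Frobenius at degree-one primes computed inside the proof of Theorem~\ref{varphi_mfl_irreducible_F_mfl_G_F_module} for (3). Throughout I will identify $\GL_3(A/\mfa) \cong \GL_3(\F_{\mfl_1}) \times \GL_3(\F_{\mfl_2})$ via the Chinese Remainder Theorem, and write $p_i$ for the projection onto the $i$-th factor.

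Property (1) is immediate: Lemma~\ref{rank_r_rank_1_correspondence_with_Hayes_gives_det_of_mod_a_surejective} already asserts $\det \bar{\rho}_{\varphi,\mfa}(G_F) = (A/\mfa)^\times$ for every non-zero ideal $\mfa$. For property (2), Theorem~\ref{mod_mfl_represetation_surjective} gives $p_i(H) = \GL_3(\F_{\mfl_i})$, so $p_i([H,H]) = [\GL_3(\F_{\mfl_i}),\GL_3(\F_{\mfl_i})]$. Because $|\F_{\mfl_i}| \geq q \geq 7$, the group $\SL_3(\F_{\mfl_i})$ is perfect and hence coincides with this commutator. Every commutator has determinant $1$, so $[H,H] \subseteq H'$, whence $p_i(H') \supseteq p_i([H,H]) = \SL_3(\F_{\mfl_i})$; the reverse inclusion is automatic from the definition of $H'$.

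For property (3), the idea is to exhibit enough Frobenius elements at degree-one primes to force the subring $R$ generated by $\mathcal{S}$ to contain both $\F_q$ and $T$. For $c \in \F_q^\times$, the prime $\mfp = (T - c)$ has good reduction for $\varphi$, and the proof of Theorem~\ref{varphi_mfl_irreducible_F_mfl_G_F_module} gives $P_{\varphi,\mfp}(x) = x^3 + x^2 + g_1(c)^{q-1} x - \mfp \in A[x]$. Reducing modulo $\mfa$, for any such $\mfp \nmid \mfa$ the element $h := \bar{\rho}_{\varphi,\mfa}(\Frob_\mfp)$ satisfies $\tr h = -1$ and $\det h = \bar{\mfp}$; since $\tr h = -1 \in (A/\mfa)^\times$, the ratio $\det h / (\tr h)^3 = -\bar{\mfp} = c - T \pmod{\mfa}$ lies in $\mathcal{S}$. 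Let $\mathcal{C} := \{c \in \F_q^\times : (T-c) \notin \{\mfl_1,\mfl_2\}\}$, so $|\mathcal{C}| \geq q - 3$. Closing $R$ under subtraction places each difference $(c_1 - T) - (c_2 - T) = c_1 - c_2$ in $R$, so $R$ contains the $\F_p$-linear span of $\mathcal{C} - c_0$ inside $\F_q$ (for any fixed $c_0 \in \mathcal{C}$). A proper $\F_p$-subspace of $\F_q$ has cardinality at most $q/p$, but $|\mathcal{C}| \geq q - 3 > q/p$ for every odd prime power $q \geq 7$, which forces this span to be all of $\F_q$. Hence $\F_q \subseteq R$, and then $T = c - (c - T) \in R$ for any $c \in \mathcal{C}$, giving $R \supseteq A/\mfa$.

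The only step that I expect to require care is the counting argument in (3): one must verify $|\mathcal{C}| > q/p$ uniformly in $q$. Since $\mfa = \mfl_1\mfl_2$ has at most two prime factors, at most two values of $c \in \F_q^\times$ are removed, giving $|\mathcal{C}| \geq q - 3$, and the inequality $q - 3 > q/p$ becomes $p^{e-1}(p-1) > 3$ for $q = p^e$. The only borderline case in our regime is $(p,e) = (3,2)$, where this reads $6 > 3$; every other admissible $q$ is strictly better. Everything else is essentially bookkeeping with the earlier results.
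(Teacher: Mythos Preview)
Your argument is correct and mirrors the paper's proof for parts (1) and (2). For (3) the paper simply picks a single $c$ with $-T+c,\,-T+c+1\in\mathcal S$ and asserts these generate $A/\mfa$, whereas you use all of $\{c-T:c\in\mathcal C\}$ together with the pigeonhole bound $|\mathcal C|\ge q-3>q/p$ to first force $\F_q\subseteq R$; this is the same underlying idea but more carefully executed, and your version transparently handles the case $q>p$, where the paper's two displayed elements alone only visibly generate $\F_p[T-c]\pmod\mfa$.
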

	\begin{proof}
		By Lemma~\ref{rank_r_rank_1_correspondence_with_Hayes_gives_det_of_mod_a_surejective}, property $(1)$ follows. For $i=1,2$, the representations
		$\bar{\rho}_{\varphi,\mfl_{i}}$ are surjective, $\bar{\rho}_{\varphi,\mfl_{i}}([G_{F},G_{F}])= [\bar{\rho}_{\varphi,\mfl_{i}}(G_F), \bar{\rho}_{\varphi,\mfl_{i}}(G_F)]= [\GL_{3}(\F_{\mfl_{i}}), \GL_{3}(\F_{\mfl_{i}})]= \SL_{3}(\F_{\mfl_{i}})$. Therefore, property $(2)$ follows from $\bar{\rho}_{\varphi,\mfa} ([G_{F},G_{F}]) \subseteq H^\prime$.
		
		For property ($3$), take any $c\in \F_{q}^{\times}$ such that $\mfp=(T-c)\in \Omega_{A}\setminus\{\mfl_{1},\mfl_{2}\}$. As before, we can find the characteristic polynomial of $\bar{\rho}_{\varphi,\mfa}(\Frob_{\mfp})$ is congruent to $-\mfp+g_{1}(c)^{q-1}x+x^2+x^3$ modulo $\mfa$. Therefore
		$$\det(\bar{\rho}_{\varphi,\mfa}(\Frob_{\mfp}))/\tr(\bar{\rho}_{\varphi,\mfa}(\Frob_{\mfp}))^3\equiv -(T-c)\pmod{\mfa}.$$
		Since $q\geq 7$, there exists $c \in \F_q^{\times}$ such that $-T+c$ and $ -T+c+1$ are in $\mathcal{S}$. These elements of $\mathcal{S}$ can generate all of $A/\mfa$. Therefore, property $(3)$ follows.
	\end{proof}
	
	\begin{prop}
		\label{Normal_solvable_subgroup_of_GL_3_F_q_lies_in_its_center}
		Let $N$ be a normal solvable subgroup of $\GL_{3}(\F_{q})$, where $q$ is a prime power. Then $N\subseteq Z(\GL_{3}(\F_{q})).$
	\end{prop}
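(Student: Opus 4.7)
The plan is to exploit the well-known fact that $\SL_{3}(\F_{q})$ is quasi-simple for every prime power $q$: it is a perfect group whose central quotient $\PSL_{3}(\F_{q})$ is non-abelian simple. The argument proceeds by first cutting $N$ down to $\SL_3(\F_q)$, and then promoting normality into centrality using a commutator trick plus Schur's lemma.

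First, I would examine $N\cap\SL_{3}(\F_{q})$. Since $N\trianglelefteq\GL_{3}(\F_{q})$, this intersection is normal in $\SL_{3}(\F_{q})$. The quasi-simplicity of $\SL_{3}(\F_{q})$ forces every normal subgroup to be contained in its centre $Z(\SL_{3}(\F_{q}))$ or to equal the whole group. The latter is impossible, for then $N$ would contain $\SL_{3}(\F_{q})$, which has $\PSL_{3}(\F_{q})$ as a non-abelian simple composition factor, contradicting the solvability of $N$. Hence $N\cap\SL_{3}(\F_{q})\subseteq Z(\SL_{3}(\F_{q}))\subseteq Z(\GL_{3}(\F_{q}))$.

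Next, I would fix $n\in N$ and show that $n$ centralises $\SL_{3}(\F_{q})$. For any $g\in\SL_{3}(\F_{q})$, the commutator $[n,g]=ngn^{-1}g^{-1}$ lies in $N$ by normality, and it has determinant $1$, so $[n,g]\in N\cap\SL_{3}(\F_{q})\subseteq Z(\SL_{3}(\F_{q}))$. Because the image lands in the centre, the standard commutator identity $[n,g_{1}g_{2}]=[n,g_{1}]\cdot g_{1}[n,g_{2}]g_{1}^{-1}$ collapses to $[n,g_{1}][n,g_{2}]$, so $g\mapsto[n,g]$ is a group homomorphism $\SL_{3}(\F_{q})\to Z(\SL_{3}(\F_{q}))$. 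Since $\SL_{3}(\F_{q})$ is perfect and the target is abelian, this homomorphism is trivial, i.e., $n$ commutes with every element of $\SL_{3}(\F_{q})$.

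Finally, since $\SL_{3}(\F_{q})$ acts absolutely irreducibly on $\F_{q}^{3}$, Schur's lemma forces its centraliser in $\GL_{3}(\F_{q})$ to consist only of scalar matrices, namely $Z(\GL_{3}(\F_{q}))$. Therefore $n\in Z(\GL_{3}(\F_{q}))$, and as $n$ was arbitrary, $N\subseteq Z(\GL_{3}(\F_{q}))$. There is no serious obstacle here; the only input one must be careful about is the simplicity of $\PSL_{3}(\F_{q})$ for \emph{every} prime power $q$ (including $q=2$, where $\SL_{3}(\F_{2})=\PSL_{3}(\F_{2})\cong\PSL_{2}(\F_{7})$ is still simple of order $168$), so that the dichotomy for normal subgroups of $\SL_3(\F_q)$ holds without restriction on $q$.
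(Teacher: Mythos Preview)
Your proof is correct and takes a cleaner route than the paper's. The paper projects $N$ to $\PGL_{3}(\F_{q})$, uses simplicity of $\PSL_{3}(\F_{q})$ together with solvability to force $\pi(N)\cap\PSL_{3}(\F_{q})=\{1\}$, and is then left with the possibility that $\pi(N)$ is cyclic of order $3$ sitting inside $\PGL_{3}/\PSL_{3}$; this residual case is eliminated by a case-by-case analysis of the minimal polynomial of a generator $\bar{\delta}$ (three cases, each giving a contradiction via determinant, normaliser size, or triviality). Your argument avoids this case split entirely: you intersect with $\SL_{3}(\F_{q})$ rather than project, use quasi-simplicity to land $N\cap\SL_{3}(\F_{q})$ in the centre, and then the commutator-homomorphism trick combined with perfectness of $\SL_{3}(\F_{q})$ shows every $n\in N$ centralises $\SL_{3}(\F_{q})$, after which Schur's lemma finishes. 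Your approach is more conceptual, uniform in $q$, and would generalise verbatim to $\GL_{n}(\F_{q})$ for any $n\geq 3$; the paper's approach is more hands-on and specific to the $3\times 3$ case, the case analysis being precisely where the authors had to improve on \cite{Che22} to drop the hypothesis $p\equiv 1\pmod{3}$.
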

	\begin{proof}
		Let $\pi$ be the natural projection map
		$$\pi:\GL_{3}(\F_{q})\ras \PGL_{3}(\F_{q}):= \GL_{3}(\F_{q})/Z(\GL_{3}(\F_{q})).$$
		Then, the group
		$\pi(N)\cap \PSL_{3}(\F_{q})$ is either $\{1\}$ or $\PSL_{3}(\F_{q})$. In the latter case, $\PSL_{3}(\F_{q})\subseteq \pi(N)$, which is a contradiction, since $\pi(N)$ is solvable, being the image of a solvable group $N$ and therefore subgroup of it is solvable, and $\PSL_{3}(\F_{q})$ is not solvable, as it is simple and non-abelian. Therefore, $\pi(N)\cap \PSL_{3}(\F_{q})=\{1\}$. Hence, via the natural projection
		$\pi^\prime:\PGL_{3}(\F_{q})\ra \PGL_{3}(\F_{q})/\PSL_{3}(\F_{q}),$ 
		the group $\pi(N)$ injects to $\PGL_{3}(\F_{q})/\PSL_{3}(\F_{q})$.
		
	 If $|\pi(N)|>1$, then $|\pi(N)| = 3$, because $|\PGL_{3}(\F_{q})/\PSL_{3}(\F_{q})|$ $=|Z(\SL_{3}(\F_{q}))|=3$. In particular, $3|(q-1)$ and the group $\pi(N)$ is cyclic subgroup of  $\PGL_{3}(\F_{q})$ of order $3$  generated by $\bar{\delta}$, i.e., the image of $\delta$ modulo $Z( \GL_{3}(\F_{q}))$. Since $3|q-1$, $x^3-1$ is a separable polynomial that annihilates
		$\bar{\delta}$ and has roots over $\F_q$, say, $1,\omega, \omega^2$. There are $3$ choices for the minimal polynomial $\min_{\bar{\delta}}(x)$ of $\bar{\delta}$.
		\begin{itemize}
			\item Suppose $\min_{\bar{\delta}}(x)=x^3-1$. In this case, the characteristic polynomial $\ch_{\bar{\delta}}(x)$ is also $x^3-1$.
			In particular, $\bar{\delta}\in \PSL_{3}(\F_{q})$, which is a contradiction to $\pi(N)\cap \PSL_{3}(\F_{q})=\{1\}$.
			\item Suppose $\min_{\bar{\delta}}(x)=(x-\alpha)(x-\beta)$ with $\alpha, \beta\in \{1,\omega,\omega^2\}$ and $\alpha\neq \beta$. WLOG, we assume that,
			$\ch_{\bar{\delta}}(x)=(x-\alpha)(x-\beta)^2$. Then $\bar{\delta}$ similar to a diagonal matrix, say $\bar{D}$, with diagonal entries $(\alpha, \beta, \beta)$. The normalizer of $\langle\bar{D}\rangle$ is
			$$\left\{\csmat{a}{0}{0}{0}{e}{f}{0}{h}{i}: a,e,f,h,i\in \F_{q}\ \text{with} \ a(ei-fh)\in \F_{q}^{\times} \right\}\pmod {Z(\GL_{3}(\F_{q}))}.$$
			Hence, the cardinality of the normaliser of $\pi(N)$ is equal to that of the normaliser of $\langle\bar{D}\rangle$. But, the latter cardinality  $(q^2-1)(q^2-q)$ is less than $|\PGL_{3}(\F_{q})|$. This a contradiction to the normality of $\pi(N)$ in $\PGL_{3}(\F_{q})$.
			
			\item Finally, suppose $\min_{\bar{\delta}}(x)=(x-\alpha)$ for $\alpha\in \{1,\omega,\omega^2\}$. In this case, $\bar{\delta}$ is a scalar matrix,
			so $\bar{\delta} = \bar{I_3}$, which is a contradiction, as $\bar{\delta}$ is a non-trivial element of order $3$ in $\PGL_{3}(\F_{q})$.
		\end{itemize}
Hence, $|\pi(N)|>1$ cannot happen. Therefore, $|\pi(N)| = 1$, which implies $N \subseteq Z(\GL_3(\F_q))$, as required.
	\end{proof}
	Since $p \equiv 1 \pmod 3$, the above claim was easy to prove in~\cite[page 118]{Che22}. Here, we need a case-by-case analysis to prove the same for any finite field $\F_q$.

	\begin{prop}
		\label{moda-surjectivity}
		Let $H$ be a subgroup of $\GL_{3}(A/\mfa)$ satisfying the hypotheses of Lemma~\ref{H_GL_3_A_mod_a}. Then, $H=\GL_{3}(A/\mfa)$.
	\end{prop}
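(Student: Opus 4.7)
The plan is a Chinese-Remainder-plus-Goursat argument: use CRT to decompose $\GL_{3}(A/\mfa)$ as a direct product, analyse the subgroup $H$ by Goursat's lemma, and use the trace/determinant data in property~$(3)$ to rule out every non-trivial Goursat case.

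First I would apply CRT: since $\mfl_1,\mfl_2\in\Omega_A$ are distinct, $A/\mfa\cong\F_{\mfl_1}\times\F_{\mfl_2}$, and correspondingly $\GL_{3}(A/\mfa)\cong\GL_{3}(\F_{\mfl_1})\times\GL_{3}(\F_{\mfl_2})$, with the analogous statement for $\SL_3$. Using property~$(1)$, $\det(H)=(A/\mfa)^{\times}$, it suffices to show $H':=H\cap\SL_{3}(A/\mfa)=\SL_{3}(\F_{\mfl_1})\times\SL_{3}(\F_{\mfl_2})$, because combining $H'$ with any preimage under $\det$ of a generator of $(A/\mfa)^{\times}$ then recovers all of $\GL_{3}(A/\mfa)$.

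Next I would apply Goursat's lemma to $H'\subseteq\SL_{3}(\F_{\mfl_1})\times\SL_{3}(\F_{\mfl_2})$; by property~$(2)$ both projections are surjective, so there exist normal subgroups $K_i\lhd\SL_{3}(\F_{\mfl_i})$ with $H'\supseteq K_1\times K_2$ together with a common quotient isomorphism $\phi\colon \SL_{3}(\F_{\mfl_1})/K_1\xrightarrow{\sim}\SL_{3}(\F_{\mfl_2})/K_2$. Since $|\F_{\mfl_i}|\geq q\geq 7$, $\PSL_{3}(\F_{\mfl_i})$ is non-abelian simple and $\SL_{3}(\F_{\mfl_i})$ is perfect, so the only normal subgroups of $\SL_{3}(\F_{\mfl_i})$ are $\{1\}$, its centre $Z_i$, and the whole group. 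If $K_1=\SL_{3}(\F_{\mfl_1})$, then $K_2=\SL_{3}(\F_{\mfl_2})$, hence $H'=\SL_{3}(A/\mfa)$, and the previous paragraph gives $H=\GL_{3}(A/\mfa)$.

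Otherwise the common Goursat quotient is a non-trivial finite (quasi-)simple group of Lie type $A_2$; by the classification of abstract isomorphisms between such groups, $\phi$ is induced by an isomorphism of underlying fields $\sigma\colon\F_{\mfl_1}\xrightarrow{\sim}\F_{\mfl_2}$, possibly composed with the graph automorphism $g\mapsto(g^T)^{-1}$. In particular $\deg_T(\mfl_1)=\deg_T(\mfl_2)$, and for every $(h_1,h_2)\in H'$ the traces and determinants of $h_1$ and $h_2$ are determined from each other by $\sigma$. Hence every element of the set $\mathcal{S}$ in~$(3)$ lies in the $\sigma$-graph $\Delta_\sigma=\{(a,\sigma(a)):a\in\F_{\mfl_1}\}$ (slightly enlarged when the graph automorphism is involved, since on $\SL_{3}$ the latter replaces $\tr(g)$ by $\tr(g^{-1})$), and $\Delta_\sigma$ is a proper subring of $\F_{\mfl_1}\times\F_{\mfl_2}$. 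This contradicts the hypothesis that $\mathcal{S}$ generates $A/\mfa$, so this Goursat branch cannot occur and we conclude $H=\GL_{3}(A/\mfa)$.

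The main obstacle I expect is the last paragraph: controlling the exact subring generated by the trace-and-determinant pairs when the graph automorphism is present, and handling the mixed cases where $K_1\neq K_2$ with one equal to $\{1\}$ and the other to its centre. In these mixed cases the very existence of $\phi$ forces the centre of one of the two $\SL_{3}$'s to be trivial, i.e., $\gcd(3,|\F_{\mfl_i}|-1)=1$ on that side, which reduces to the symmetric case already treated.
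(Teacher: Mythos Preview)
Your overall strategy (CRT plus Goursat) matches the paper's, but there is a genuine gap in the passage from $H'$ to $\mathcal{S}$. Your Goursat analysis is carried out on $H'=H\cap\SL_{3}(A/\mfa)$, so the relation ``$\tr(h_2)^3/\det(h_2)=\sigma(\tr(h_1)^3/\det(h_1))$'' is only established for $(h_1,h_2)\in H'$. However, the set $\mathcal{S}$ in property~(3) is built from \emph{all} $h\in H$, and the concrete elements used to verify~(3) (namely $\bar{\rho}_{\varphi,\mfa}(\Frob_{(T-c)})$ with determinant $\equiv T-c$) lie in $H\setminus H'$. So you have not shown $\mathcal{S}\subseteq\Delta_\sigma$. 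The paper closes this gap by running Goursat a second time on $H\subseteq\GL_{3}(\F_{\mfl_1})\times\GL_{3}(\F_{\mfl_2})$ (the projections $p_i$ are onto by combining (1) and (2)), showing via a separate proposition that the resulting kernels $N_i\lhd\GL_{3}(\F_{\mfl_i})$ are solvable and hence central, obtaining an isomorphism $\alpha:\PGL_{3}(\F_{\mfl_1})\xrightarrow{\sim}\PGL_{3}(\F_{\mfl_2})$, and then lifting $\alpha$ to $\GL_3$ via Dieudonn\'e's theorems. Only after this lift does one get a relation valid for every $h\in H$, and thus $\mathcal{S}\subseteq\Delta_\sigma$.

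Your treatment of the graph-automorphism branch is also too optimistic. Under $g\mapsto (g^T)^{-1}$ the invariant $\tr(g)^3/\det(g)$ is sent to $\tr(g^{-1})^3/\det(g^{-1})$, and there is no reason these pairs should lie in any fixed proper subring, so ``slightly enlarging $\Delta_\sigma$'' does not give a contradiction with property~(3). The paper disposes of this case by a direct computation with the characteristic polynomials of $\bar{\rho}_{\varphi,\mfl_i}(\Frob_{(T-c)})$ for degree-$1$ primes (using the explicit shape of $\varphi_T$), comparing traces and determinants under the putative second-type isomorphism and deriving a numerical contradiction; this uses arithmetic input beyond the abstract properties~(1)--(3).
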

	\begin{proof}
		We will now show that properties $(1), (2)$ and $(3)$ in Lemma~\ref{H_GL_3_A_mod_a} imply that $H=\GL_{3}(A/\mfa)$. Let $N^\prime_{1}$ be the kernel of $p^\prime_{2}$, i.e.,
		\begin{align}
			\label{N'_1_isomorphic_to_SL_3_of_A_mod_l_i_cap_H}
			N^\prime_{1}=\left\{ h^\prime\in H^\prime| h^\prime\equiv I_{3} \pmod{\mfl_{2}}\right\}\cong H \cap (\SL_{3}(\F_{\mfl_{1}}) \times I_{3}).
	\end{align}
		Similarly, the kernel $p^\prime_{1}$ denoted by $N^\prime_{2}$ and it satisfies 
$N^\prime_{2}\cong H \cap (I_3\times \SL_{3}(\F_{\mfl_{2}}))$. By property ($2$),  $N^\prime_{i}$ is a normal subgroup of $\SL_{3}(\F_{\mfl_{i}})$, since the projections $p_i^{\prime}$ are surjective for $i=1,2$. 
By~\cite[Lemma 5.2.1]{Rib76}, the image of $H^\prime$ in $\SL_{3}(\F_{\mfl_{1}})/N^\prime_{1}\times \SL_{3}(\F_{\mfl_{2}})/N^\prime_{2}$ is the graph of the group isomorphism $\SL_{3}(\F_{\mfl_{1}})/N^\prime_{1}\xrightarrow{\sim}\SL_{3}(\F_{\mfl_{2}})/N^\prime_{2}$. In particular, this isomorphism implies that if $N^\prime_{1}=\SL_{3}(\F_{\mfl_{1}})$ then $N^\prime_{2}=\SL_{3}(\F_{\mfl_{2}})$ and vice versa. Therefore, if $N^\prime_{1}=\SL_{3}(\F_{\mfl_{1}})$, then by~\eqref{N'_1_isomorphic_to_SL_3_of_A_mod_l_i_cap_H}, we get $\SL_{3}(A/\mfa)= \SL_{3}(\F_{\mfl_{1}})\times \SL_{3}(\F_{\mfl_{2}}) \subseteq H$. Now, by property $(1)$, we have $H=\GL_{3}(A/\mfa)$.
		
		Now we show that at least one of $N^\prime_{1}=\SL_{3}(\F_{\mfl_{1}})$ or $N^\prime_{2}=\SL_{3}(\F_{\mfl_{2}})$ holds. If not, then $N^\prime_{i}$ are proper normal subgroups of $\SL_{3}(\F_{\mfl_{i}})$
		for $i=1,2$. Since $\PSL_{3}(\F_{\mfl_{i}})$ is non-abelian simple and $\SL_{3}(\F_{\mfl_{i}})$ equals to its commutator subgroup, the inclusion $N^\prime_{i}\subseteq Z(\SL_{3}(\F_{\mfl_{i}}))$ holds for $i=1,2$. Observe that $|Z(\SL_{3}(\F_{\mfl_{i}}))|$ is either $1$ or $3$, so is $|N^\prime_{i}|$. In particular, $N^\prime_{i}$ is an abelian group.

		By Theorem~\ref{mod_mfl_represetation_surjective}, the projection $p_{i}:H \ra \GL_{3}(\F_{\mfl_{i}})$ are surjective  for $i=1, 2$. Let $N_{1}$ be the kernel of $p_{2}$, i.e.,
		\begin{align}\label{N_1_isomorphic_to_GL_3_of_A_mod_l_i_cap_H}
				N_{1}=\left\{ h\in H| h\equiv I_{3} \pmod{\mfl_{2}}\right\}\cong H \cap (\GL_{3}(\F_{\mfl_{1}})\times I_{3})   
		\end{align}
		Similarly, the kernel of $p_{1}$ is denoted by $N_{2}$ and it satisfies $N_{2}\cong H \cap (I_{3}\times\GL_{3}(\F_{\mfl_{2}})$. By~\cite[Lemma 5.2.1]{Rib76}, the image of $H$ in $\GL_{3}(\F_{\mfl_{1}})/N_{1}\times \GL_{3}(\F_{\mfl_{2}})/N_{2}$  is the graph of the group isomorphism $\GL_{3}(\F_{\mfl_{1}})/N_{1}\xrightarrow{\sim}\GL_{3}(\F_{\mfl_{2}})/N_{2}$. By~\eqref{N_1_isomorphic_to_GL_3_of_A_mod_l_i_cap_H} and~\eqref{N'_1_isomorphic_to_SL_3_of_A_mod_l_i_cap_H}, we have $N_{i}\cap \SL_{3}(\F_{\mfl_{i}})=N^\prime_{i}$ and so we get the isomorphism $N_{i}/N^\prime_{i}\cong N_{i}\SL_{3}(\F_{\mfl_{i}})/\SL_{3}(\F_{\mfl_{i}})\quad \text{for}\quad i=1,2.$ Since $N_{i}\SL_{3}(\F_{\mfl_{i}})/\SL_{3}(\F_{\mfl_{i}})\subseteq \GL_{3}(\F_{\mfl_{i}})/\SL_{3}(\F_{\mfl_{i}})\cong (\F_{\mfl_{i}})^{\times}$, therefore $N_{i}/N^\prime_{i}$ is abelian. Since $N^\prime_{i}$ is abelian, $N_{i}$ is a solvable normal subgroup of $\GL_{3}(\F_{\mfl_{i}})$.
		
		By Proposition~\ref{Normal_solvable_subgroup_of_GL_3_F_q_lies_in_its_center}, $N_{i}\subseteq Z(\GL_{3}(\F_{\mfl_{i}}))$ for $i=1,2$. Now, by taking further quotient, by~\cite[Lemma 5.2.1]{Rib76}, the image of $H$ in $\PGL_{3}(\F_{\mfl_{1}})\times\PGL_{3}(\F_{\mfl_{2}})$ is the graph of a group isomorphism $\alpha:\PGL_{3}(\F_{\mfl_{1}})\xrightarrow{\sim}\PGL_{3}(\F_{\mfl_{2}})$.  Since $q$ is odd, by~\cite[Theorem 2]{Die80}, $\alpha$ can be lifted to an isomorphism	$\widetilde{\alpha}:\GL_{3}(\F_{\mfl_{1}})\xrightarrow{\sim}\GL_{3}(\F_{\mfl_{2}}).$

		Given any field isomorphism $\sigma:\F_{\mfl_{1}}\xrightarrow{\sim}\F_{\mfl_{2}}$ and a character $\chi:\GL_{3}(\F_{\mfl_{1}})\ra \F_{\mfl_{2}}^{\times}$, we can create two isomorphisms $\GL_{3}(\F_{\mfl_{1}})\xrightarrow{\sim} \GL_{3}(\F_{\mfl_{2}})$ by 
		\begin{itemize}
			\item[(first type)] $B\mapsto \chi(B)gB^{\sigma}g^{-1}$
			\item[(second type)] $B\mapsto \chi(B)g((B^{T})^{-1})^{\sigma}g^{-1}$
		\end{itemize}
		where $B\in\GL_{3}(\F_{\mfl_{1}})$, $B^{\sigma}$ is the matrix that applies $\sigma$ to each entry of $B$, and $g\in\GL_{3}(\F_{\mfl_{2}})$. Such a $\sigma$ can exist because $|\F_{\mfl_{1}}|=|\F_{\mfl_{2}}|$. This follows from
$\SL_{3}(\F_{\mfl_{1}})/N^\prime_{1}\xrightarrow{\sim}\SL_{3}(\F_{\mfl_{2}})/N^\prime_{2}$. By~\cite[Theorem 1]{Die80},  $\widetilde{\alpha}$ must be one of the above two types for some $\sigma,\chi$ and $g$.
		
		\begin{lem}\label{tilde_alpha_must_be_of_first_type}
			The map $\widetilde{\alpha}$ must be of the first type.
		\end{lem}
		\begin{proof}
			Suppose, if possible, the map $\widetilde{\alpha}$ is of the second type. Let $\mfp=(T-c)\in \Omega_{A}\setminus\{\mfl_{1},\mfl_{2},(T)\}$. The projection of the image of $\bar{\rho}_{\varphi,\mfa}(\Frob_{\mfp})\in H$ in $\PGL_{3}(\F_{\mfl_{1}})\times\PGL_{3}(\F_{\mfl_{2}})$ onto the second factor is given by $\bar{\rho}_{\varphi,\mfl_{2}}(\Frob_{\mfp})\cdot Z(\GL_{3}(\F_{\mfl_{2}}))$, which is $\widetilde{\alpha}(\bar{\rho}_{\varphi,\mfl_{1}}(\Frob_{\mfp}))\cdot Z(\GL_{3}(\F_{\mfl_{2}}))= g(((\bar{\rho}_{\varphi,\mfl_{1}}(\Frob_{\mfp}))^{T})^{-1})^{\sigma}g^{-1}\cdot Z(\GL_{3}(\F_{\mfl_{2}}))$.
			Hence, the elements  $\bar{\rho}_{\varphi,\mfl_{2}}(\Frob_{\mfp})$ and $g(((\bar{\rho}_{\varphi,\mfl_{1}}(\Frob_{\mfp}))^{T})^{-1})^{\sigma}g^{-1}$ are in the same coset of $\PGL_{3}(\F_{\mfl_{2}})$, i.e., there exist a $\lambda\in \F_{\mfl_{2}}^{\times}$ such that
			\begin{align}\label{same_coset_of_PGL_3_A_mod_mfl_2}
				g(((\bar{\rho}_{\varphi,\mfl_{1}}(\Frob_{\mfp}))^{T})^{-1})^{\sigma}g^{-1}=(\lambda\cdot I_{3})\cdot\bar{\rho}_{\varphi,\mfl_{2}}(\Frob_{\mfp}).
			\end{align}
			Now computing as in Theorem~\ref{varphi_mfl_irreducible_F_mfl_G_F_module}, the characteristic polynomials of $(\bar{\rho}_{\varphi,\mfl_{1}}(\Frob_{\mfp}))^{-1}$ and $\bar{\rho}_{\varphi,\mfl_{2}}(\Frob_{\mfp})$  are $- {\mfp^{-1}}-{\mfp^{-1}}x-{g_{1}(c)^{q-1}}{\mfp^{-1}} x^2+x^3\pmod{\mfl_{1}}$ and $-\mfp+g_{1}(c)^{q-1}x+x^2+x^3\pmod{\mfl_{2}}$, resp.
			
			If $g_1$ is of Type 1, then we compare the trace and determinant on both sides of~\eqref{same_coset_of_PGL_3_A_mod_mfl_2} to get
			$$ -\lambda \equiv \sigma({\mfp}^{-1}\pmod{\mfl_{1}})\equiv\lambda^{3}\mfp\pmod{\mfl_{2}}.$$
			Therefore, $\mfp\equiv-(\frac{1}{\lambda})^2\pmod{\mfl_{2}}$. This congruence cannot hold if $\deg_{T}(\mfl_{2})\geq 2$. If $\mfl_{2}=(T-c_{2})$ for some $c_{2}\in \F_{q}$, then $c-c_{2}=(\frac{1}{\lambda})^2$ in $\F_{q}^{\times}$. Since $q \geq 7$, we can choose $c$ so that $c-c_2 \not \in (\F_q^{\times})^2$. This gives us a contradiction. 
			
			If $g_1$ is of Type 2, then the characteristic polynomials of $\bar{\rho}_{\varphi,\mfl_{2}}(\Frob_{\mfp})$, $(\bar{\rho}_{\varphi,\mfl_{1}}(\Frob_{\mfp}))^{-1}$ are $x^3+x^2-\mfp\pmod{\mfl_{2}}$, $x^3-{\mfp^{-1}}x- {\mfp^{-1}}\pmod{\mfl_{1}}$, resp., By~\eqref{same_coset_of_PGL_3_A_mod_mfl_2}, the trace of            $g(((\bar{\rho}_{\varphi,\mfl_{1}}(\Frob_{\mfp}))^{T})^{-1})^{\sigma}g^{-1}$ is $0$ and the trace of $(\lambda\cdot I_{3})\cdot\bar{\rho}_{\varphi,\mfl_{2}}(\Frob_{\mfp})$ is non-zero. This gives us the desired contradiction.
			Therefore, $\widetilde{\alpha}$ must be of the first type.
		\end{proof}
		By Lemma~\ref{tilde_alpha_must_be_of_first_type}, $\widetilde{\alpha}(B)=\chi(B)gB^{\sigma}g^{-1}$ for all $B\in\GL_{3}(\F_{\mfl_{1}})$. Hence, we have $\frac{\tr(\widetilde{\alpha}(B)^3)}{\det(\widetilde{\alpha}(B))}=\sigma\left(\frac{\tr(B)^3}{\det(B)}\right).$ Define $W:=\{(x_{1},x_{2})|\sigma(x_{1})=x_{2}\}$ be a subring of $A/\mfa\cong \F_{\mfl_{1}}\times \F_{\mfl_{1}}$. For each element $(h_{1},h_{2})\in H$, we have $\widetilde{\alpha}(h_1)$ and $h_2$ in the same quotient of $\PGL_{3}(\F_{\mfl_2})$, i.e., $\widetilde{\alpha}(h_1)=\lambda h_2$ for some $\lambda\in \F_{\mfl_2}^{\times}$. Then, we have $$\sigma\left(\frac{\tr(h_1)^3}{\det(h_1)}\right)=\frac{\tr(\widetilde{\alpha}(h_1)^3)}{\det(\widetilde{\alpha}(h_1))}=\frac{\tr(h_2)^3}{\det(h_2)}.$$
		In particular, we have $\left(\frac{\tr(h_{1})^3}{\det(h_{1})},\frac{\tr(h_{2})^3}{\det(h_{2})}\right) \in W$. Also, the element $\left(\frac{\det(h_{1})}{\tr(h_{1})^3}, \frac{\det(h_{2})}{\tr(h_{2})^3}\right)\in W$ if $\tr(h_{1})\neq 0$ and $\tr(h_{2})\neq 0$. Therefore, we have $\mathcal{S}\subseteq W$, and hence $A/\mfa \subseteq W$ by Property ($3$). This is a contradiction since $|W| < |A/\mfa| $.  Hence, at least one of the $N^\prime_{1}, N^\prime_{2} $ is not proper. Therefore, $H=\GL_{3}(A/\mfa)$, as required.
	\end{proof}
	
	Finally, we are in a position to give a proof of Theorem~\ref{adelic_surjectivity}.
	
	\begin{proof}[Proof of Theorem~\ref{adelic_surjectivity}]
		By Proposition~\ref{Hayes}, the adelic representation of the Carlitz module is surjective. Arguing as in Lemma~\ref{rank_r_rank_1_correspondence_with_Hayes_gives_det_of_mod_a_surejective}, we get 
		$\det\rho_{\varphi}(G_{F})=\rho_{C}(G_{F})={\widehat{A}}^{\times}$. Now, it is enough to show $\rho_{\varphi}([G_{F}, G_{F}])=\SL_{3}(\widehat{A})$ and this is equivalent to  $\bar{\rho}_{\varphi,\mfa}([G_{F},G_{F}])=\SL_{3}(A/\mfa)\cong \prod_{i}^{}\SL_{3}(A/{\mfl_{i}^{n_{i}}}), $ for every ideal $0 \neq \mfa=\mfl_{1}^{n_{1}}\mfl_{2}^{n_{2}}\cdots \mfl_{k}^{n_{k}}$. By~\cite[Lemma 20]{Che22}, each $\SL_{3}(A/{\mfl_{i}^{n_{i}}})$ has no non-trivial abelian quotient. Arguing as in the proof of~\cite[Lemma 24]{Che22}, each projection
		$$\bar{\rho}_{\varphi,\mfa}:[G_{F},G_{F}]\longrightarrow \SL_{3}(A/{\mfl_{i}^{n_{i}}})\times \SL_{3}(A/{\mfl_{j}^{n_{j}}})$$ is surjective for $1\leq i\leq j\leq k$.  Now, by~\cite[Lemma 5.2.2]{Rib76}, we have 
		$\bar{\rho}_{\varphi,\mfa}([G_{F},G_{F}])=\SL_{3}(A/\mfa)$ for all non-zero ideal $\mfa$ of $A$. This completes the proof of the Theorem.
	\end{proof}

	\section{Comparisons with~\cite{Che22}}
	We conclude the article with remarks highlighting comparisons with the results presented in~\cite{Che22}.
	\begin{itemize}
		\item  In this article, our results hold over any $\F_q$ with $q=p^e \geq 7$ be an odd prime power, whereas~\cite{Che22} restricts to primes $p \in \mathbb{P}$ satisfying $p \equiv 1\pmod{3}$.

    	\item In Theorem~\ref{adelic_surjectivity}, we construct a two-parameter family of Drinfeld modules with surjective adelic Galois representations. This family includes the specific Drinfeld module studied in~\cite{Che22}.
		
		\item In $\S3$, to prove the irreducibility of $\varphi[\mfl]$ as a $\F_{\mfl}[G_{F}]$-module for $\mfl=(T)$ case by contradiction, we work with the characteristic polynomials of $\bar{\rho}_{\varphi, T}(\Frob_{(T-c)})$ for $c\in \F_{q}^{\times}$. We use the theory of permutation polynomials to get a contradiction. In contrast, the corresponding step in~\cite{Che22} is more straightforward, as the characteristic polynomials involved are simpler.

		\item We establish that Proposition~\ref{Normal_solvable_subgroup_of_GL_3_F_q_lies_in_its_center} holds
		over an arbitrary finite field. Previously, in~\cite{Che22}, the same conclusion was obtained under the assumption that the prime $p$ with $p \equiv 1\pmod{3}$.

		\item In the proof of Lemma~\ref{tilde_alpha_must_be_of_first_type}, our approach required an additional argument to demonstrate that two matrices do not lie in the same coset of $\PGL_{3}(\F_{\mfl_{2}})$, as both matrices possess non-zero traces. In contrast, the argument in~\cite{Che22} is more straightforward, relying on the fact that one matrix has trace zero while the other has non-zero trace.
   \end{itemize}


  \bibliographystyle{plain, abbrv}
	
\end{document}